\newtheorem{rem}{Remark}[section]
\newcommand{\beq}{\begin{equation}}
\newcommand{\eeq}{\end{equation}}
\newcommand{\beqa}{\begin{eqnarray}}
\newcommand{\eeqa}{\end{eqnarray}}
\newcommand{\pr}{\partial}
\colorlet{colorYO}{red}
\colorlet{colorSJ}{blue}
\newcommand{\YOr}[2]{\textcolor{colorSJ}{\sout{#1}#2}}   % displays -//- (crossed)
\begin{document}

\title{An efficient and energy decaying discontinuous Galerkin method for Maxwell's equations  for the Cole-Cole dispersive medium}
% Short title for running heads:
\shorttitle{Energy decay DG for Cole-Cole Maxwell's equations}

\author{Jiangming Xie}

\author{%
{\sc
Jiangming Xie\thanks{Email: xiejiangming@mail.tsinghua.edu.cn} } \\[2pt]
Department of Mathematical Sciences, Tsinghua University, Beijing 100084, P.R. China\\[6pt]
{\sc and}\\[6pt]
{\sc Maojun Li}\thanks{Email: limj@uestc.edu.cn}\\[2pt]
School of Mathematical Sciences, University of Electronic Science and Technology of China, Chengdu, Sichuan, 611731, P.R. China\\[6pt]
{\sc and}\\[6pt]
{\sc Miao-Jung Yvonne Ou}\thanks{Corresponding author. Email: mou@udel.edu}\\[2pt]
Department of Mathematical Sciences, University of Delaware, Newark, DE 19716, USA
}
% Short list of authors for running heads:
\shortauthorlist{J. M. Xie \emph{et al.}}

\maketitle

\begin{abstract}
% Body of abstract:
{In this work, we investigate the propagation of electromagnetic waves in the Cole-Cole dispersive medium  by using the discontinuous Galerkin (DG) method to solve  the coupled time-domain Maxwell's equations and polarization equation.
We define a new and   sharpened total energy function for the Cole-Cole model, which better describes the behaviors of the energy than  what is available in the current literature.
A major theme in the time-domain numerical modeling of this problem  has been tackling the difficulty of handling the nonlocal term  involved in the time-domain polarization equation. Based on the diffusive representation and the quadrature formula, we derive an approximate system, where the convolution kernel is replaced by a finite number of auxiliary  variables that satisfy  local-in-time ordinary differential equations.
To ensure the resulted approximate system is stable, a nonlinear constrained optimization numerical scheme is established  to determine the quadrature coefficients.  By a special choice of the numerical fluxes and projections, we  obtain {for the constant coefficient case } an optimal-order convergence result for the semi-discrete DG scheme. The temporal discretization is achieved by the standard two-step backward difference formula and a fast algorithm with linear  complexity  is constructed.
Numerical examples are provided for demonstrating the efficiency of the proposed algorithm, validating the theoretical results and  illustrating the behaviors of the energy.}
% Keywords:
{Cole-Cole dispersive medium; Maxwell's equations; discontinuous Galerkin method; optimal-order convergence; fast algorithm; linear  complexity.
}
\end{abstract}

\section{Introduction}
\label{sec;introduction}
In electromagnetics, a medium with frequency dependent permittivity or permeability is called dispersive medium,  which is abundant in nature and has been studied  in various applications such as  biological tissue, ionosphere, water, soil,  plasma and  radar absorbing material; see \cite{GabrielBiological,HoekstraSoil,PostowBiological} for reference.
The propagation of electromagnetic waves in  dispersive  media is described by the time-domain Maxwell's equations coupled with the nonlocal polarization equations. Typical models that characterise the dispersion process include the Debye model \cite{Debye}, the Lorentz model \cite{Cassier2017,LuebbersLo}, the Havriliak-Negami model \cite{HavriliakNegami} and the Cole-Cole model \cite{CC1941,GarrappaDie}.
Numerical methods for these models include  the finite-difference time-domain (FDTD) method \cite{GuoBio,Rekanosfre}, the time-domain finite element (FETD) method \cite{Bokil,Jiao,LiChen}, the spectral time-domain method (STD) \cite{Zeng} and the time-domain discontinuous Galerkin (DGTD) method  \cite{BokilLiMaxwell,CockburnLiShu,HesthavenWarburton,HuangLi,LuZhang,LyuXu,WangXie,WangXieZhang}.

Since the Cole-Cole model can adequately capture the dispersive and dissipative behaviors of many biological materials, its numerical solution has received much attention. Intensive studies has been devoted to the  FDTD method.  Roughly speaking, for the FDTD method, two candidates  have  appeared  in the literature to deal with the nonlocal polarization equation. One of them approximates the fractional derivative by a sum of decaying exponentials \cite{bai2022second,Torres,  MrozowskiStuchly,RekanosPapadopoulos,SchusterLuebbers} and the other  approximates the induced polarization by a time convolution of the electric field \cite{CausleyJiang}.
In \cite{LiHuangLinCOLE}, the authors gave the first numerical analysis of the FETD method and defined the  energy function for the Cole-Cole model.
In \cite{WangHuang}, the authors used the STD method to study the Cole-Cole model by formulating the Cole-Cole model as a  second order partial integral-differential equation. 
In \cite{WangZhangZhang}, the authors  used the continuous Galerkin finite element method in time and DG method in space to solve the Cole-Cole model  \cite{WangZhangZhang}, where the sum-of-exponential approximation for the  fractional derivative was used to speed up the evolution.

%\YO{(Q:In the references mentioned in the previous two sections, how %many used/mentioned the energy?? )}
%\JM{1. \cite{LiHuangLinCOLE}  presented the classical energy. 
%2. \cite{WangHuang} numerically studyed the energy in %\cite{LiHuangLinCOLE}
%3. \cite{bai2022second} numerically studyed the energy in %\cite{LiHuangLinCOLE}}

Despite the  many aforementioned  works, for the Cole-Cole model, results on the energy analysis for the Cole-Cole model are scarce. As is shown in \cite{WangHuang} and \cite{bai2022second}, the energy formula used in the current literature \cite{LiHuangLinCOLE} is not monotonically decreasing with respect to time.
Since a monotonically decreasing total energy plays an important role in studying the stability of a dynamical system, the availability of a correct energy formula for the Cole-Cole model is of utmost importance. In contrast to the existing energy formula for the Cole-Cole model, the energy formula derived in Section \ref{governing equation} of this paper takes into account the diffusive part and is monotonically decreasing in time. More precisely, by reformulating 
the polarization equations with  the diffusive representation, we introduce the diffusive energy and  define a  new and sharped total energy function for the Cole-Cole model. We give a complete theoretical analysis to show the monotonic decreasing property of  the new defined total energy function. 
 
From the numerical analysis point of view, the most notable challenge in solving  the Cole-Cole model is the handling of the fractional derivative, which is a pseudo-differential operator, involved in the polarization equation.  Since the solutions depend on all time history,  a naive implementation will present a heavy computation burden especially for large number of time steps. To overcome this difficulty, we transform the nonlocal problem into local continuum   problems with the help of  the diffusive representation. Roughly speaking,  with the diffusive representation, the convolution kernel is represented by its Laplace transform; see \cite{BirkgauJa, blanc2014wave, DeschMiller,Diethelm08,HaddarMatignon,Lombardalpha,xie2019discontinuous,YuanAgrawal}  and the references therein for more details.
By discretizing the resulted infinite integral  with a quadrature formula,  the fractional derivative is replaced by a finite number of continuous auxiliary variables which  satisfy the local-in-time ordinary differential equations (ODEs) and hence can be handled easily. Since the  stability of the resulted approximate system strongly relies on the positivity of the  quadrature coefficients, a  nonlinear constrained optimization scheme is applied to determine the weights and abscissae.  
The spatial discretization is handled by using the DG method. 
By a special choice of numerical fluxes and projections, the semi-discrete DG scheme proposed in this paper has an optimal convergence rate of $\mathcal{O}(h^{k+1})$ with $k$ and $h$ being the degree of the polynomials and the spatial step-size, respectively. In comparison, the DG discretization used  in \cite{WangZhangZhang} for solving the same problem has a convergence rate of $\mathcal{O}\left(h^{k+0.5}\right)$.
 The standard two-step backward difference formula (BDF2) is used for the temporal discretization and   the overall complexity of the propose scheme is $\mathcal{O}(N\,\text{log}N)$ with N being the total time steps.

The remainder of this paper is organized as follows.  In Section \ref{governing equation}, we introduce the governing equations of  electromagnetic waves in the Cole-Cole dispersive medium and illustrate the energy property of this model.
In Section \ref{SpatialDG}, we present a DG scheme for spatial discretization  and prove the semi-discrete stability and estimate the semi-discrete errors.  In Section \ref{appmodel}, we derive the approximate system and the approximate energy function.  In Section \ref{optimization}, we  introduce the nonlinear optimization method to determine the quadrature coefficients. The dispersion analysis is given in  Section \ref{dispersion}. In Section \ref{numerical},  we provide numerical examples to demonstrate the efficiency  of our fast scheme and verify the theoretical results.
\section{Maxwell's equations in the Cole-Cole dispersive medium }
\label{governing equation}

In this section, we present the governing equations of electromagnetic wave propagation in dispersive media, and introduce the energy property of this model.

\subsection{Physical model}
For a Cole-Cole dispersive medium, the permittivity $\epsilon_r(\omega)$ in the frequency domain can be expressed as  \cite{CC1941}
$$
\epsilon_r(\omega)=\epsilon_{\infty}+\frac{\epsilon_s-\epsilon_{\infty}}
{1+\left(i\omega\tau_0\right)^\alpha},\quad 0<\alpha <1,  \quad \epsilon_s>\epsilon_{\infty},
$$
where $\omega$ is the  angular  frequency, $\epsilon_s,\,\epsilon_{\infty}$  the static permittivity and the infinite-frequency permittivity,  respectively, $\tau_0$ the relaxation time and $i=\sqrt{-1}$ is the imaginary unit. In terms of the Laplace transform $\mathcal{L}[\cdot]$, the   electric field ${E}$ and the induced polarization field ${P}$  are related by  
\begin{equation}\label{eq:freEP}
\mathcal{L}\left[{P}\right]({x})=\epsilon_0(\epsilon_r-\epsilon_{\infty})\mathcal{L}
\left[E\right]( {x})
=\frac{\epsilon_0(\epsilon_s-\epsilon_{\infty})}{1+(i\omega\tau_0)^{\alpha}}\mathcal{L}
\left[E\right]( {x}),\quad  {x}\in\Omega\subset\mathbb{R}^d,\ d=1,2,3.
\end{equation}
 For simplicity, we consider a one-dimensional Cole-Cole  model and note that the analysis in this paper can be easily generalized to higher dimensional models in a straightforward fashion. The time-domain governing equations for the one-dimensional Cole-Cole model consisting of the  Maxwell's equations and the inverse Laplace transform of (\ref{eq:freEP}):
\begin{eqnarray}
\label{eq:MaxColeColeH}
\mu_0 \frac{\pr H}{\pr t}&=& \frac{\partial E}{\partial x}+ {F}_1,\ \, \ \quad \ \ \ \quad\quad\, \text{in}\quad \Omega\times (0,T],\\
\label{eq:MaxColeColeE}
\epsilon_0\epsilon_{\infty}\frac{\pr E}{\pr t} &=&\frac{ \partial H}{\partial x} -\frac{\pr P}{\pr t}+{F}_2,\, \ \, \ \quad \text{in}\quad \Omega\times (0,T],\\
\label{eq:MaxColeColeP}
\tau_0^{\alpha}\frac{\pr^{\alpha}P}{\pr t^{\alpha}}+ P &=&\epsilon_0(\epsilon_s-\epsilon_{\infty})  E+F_3,\, \, \ \ \text{in}\quad \Omega\times (0,T],
\end{eqnarray}
where $H$ is the  magnetic field, $F_1,\,\ F_2,\,F_3 $ the external source functions,  $\Omega\subset\mathbb{R}^1$ a bounded interval, and $\epsilon_0,\,\mu_0$ represent the permittivity and  the permeability of the vacuum, respectively. Moreover, the  Caputo fractional derivative is defined as \cite{Podlubny}
\begin{equation}\label{def:Caputo}
\frac{\partial^{\alpha}P(x,t)}{\partial t^{\alpha}}:=\frac{t^{-\alpha}}{\Gamma(1-\alpha)}\ast \frac{\partial P(x,t)}{\partial t}=\frac{1}{\Gamma(1-\alpha)}\int_0^t\left(t-\tau\right)^{-\alpha} \frac{\partial P(x,\tau)}{\partial \tau}d\tau, \quad 0<\alpha<1,
\end{equation}
where $\ast$ indicates the temporal convolution and  $\Gamma(\cdot)$ is the gamma function
$$
\Gamma(\alpha)=\int_0^{\infty}e^{-\theta}\theta^{\alpha-1}d\theta,\quad\forall\, \theta\notin \mathbb{Z}.
$$
We consider the periodic boundary condition
and the following periodic initial conditions
\begin{equation}\label{eq:iniMax}
E (x,0)= E _0(x),\quad H(x,0)=H_0(x),\quad P(x,0)=P_0(x)=0.
\end{equation}

Note that in \eqref{eq:MaxColeColeH}, the spatial derivative has a different sign from the 3-D Maxwell equations. For more details, see \cite{McDonald2019Electrodynamics}.

\subsection{The diffusive representation }
The temporal convolution involved in the polarization equation  is hard to be implemented numerically. A direct discretization  method needs to store  the solutions history, which is expensive and inefficient.  In this section, we introduce the diffusive representation to transform the nonlocal integral into local problems. Suppose $P\in H^1(0,T; L^2(\Omega) )$. According to the Euler Gamma formula
$
\Gamma(1-\alpha)\Gamma(\alpha)=\frac{\pi}{\sin(\pi \alpha)},\,\alpha\notin \mathbb{Z}$,  \eqref{def:Caputo} becomes
\begin{equation}\label{eq:reDerivative}
\begin{aligned}
\frac{\partial^{\alpha}P(x,t)}{\partial t^{\alpha}}&=\frac{\sin(\pi\,\alpha)}{\pi}
\int_{0}^{\infty}\int_0^te^{-\theta}\left(\frac{\theta}{t-\tau}\right)^{\alpha}\,\frac{1}{\theta}
\,\frac{\partial P(x,\tau)}{\partial \tau}\,d\tau\,d\theta\\
&=\frac{\sin(\pi\,\alpha)}{\pi}\int_{0}^{\infty}\lambda^{\alpha-1}\int_{0}^te^{-(t-\tau)\lambda}\,\frac{\partial P(x,\tau)}{\partial \tau}\,d\tau\,d\lambda,
\end{aligned}
\end{equation}
where we have used $\theta=(t-\tau)\lambda$.

The analytic structure can be seen more clearly using the measure supported in $\mathbb{R}^+$ defined below and its Laplace transform,
\begin{eqnarray}
dM_\alpha(\lambda)=\mu_\alpha(\lambda)\, d\lambda \mbox{\quad with density\quad } \mu_\alpha(\lambda):=\frac{\sin \pi \alpha}{\pi}\, \lambda^{\alpha-1},\\ h_\alpha(t):=\int_0^\infty e^{-\lambda t} dM_\alpha(\lambda).\label{def:h_alpha}
\end{eqnarray}
Introducing the auxiliary variable
\begin{equation}\label{def:psidr}
 {\psi}(x,t;\lambda):=\frac{\sin(\pi\,\alpha)}{\pi}\lambda^{\alpha-1}\int_{0}^te^{-(t-\tau)\lambda}\,\frac{\partial P(x,\tau)}{\partial \tau}\,d\tau =\mu_\alpha(\lambda)\int_{0}^te^{-(t-\tau)\lambda}\,\frac{\partial P(x,\tau)}{\partial \tau}\,d\tau,
\end{equation}
which can be easily shown to satisfy the following PDE, which is locally an ODE:
\begin{eqnarray}
\label{eq:drpsider}
\frac{\partial {\psi}}{\partial t}&=&-\lambda {\psi}+\frac{\sin(\pi\alpha)}{\pi}\lambda^{\alpha-1}\frac{\partial P}{\partial t},\quad \lambda>0,\\
\label{psi_IC}
\psi(x,0;\lambda)&=&0,\quad \lambda>0.
\end{eqnarray}
Consequently, the fractional derivative can be expressed as
\begin{equation}
\label{eq:DRDreppsi}
\frac{\partial^{\alpha}P(x,t)}{\partial t^{\alpha}}=\int_{0}^{\infty} {\psi}(x,t;\lambda)\,d\lambda=(h_\alpha\ast \partial_t P)(t).
\end{equation}
Therefore, \eqref{eq:MaxColeColeP} is equivalent to the augmented system of \eqref{eq:drpsider}, \eqref{psi_IC} and the equation below
\begin{equation}
\tau_0^\alpha \,h_\alpha\ast \partial_t P +P={\epsilon_0(\epsilon_s-\epsilon_{\infty})}E+F_3.
\end{equation}

\subsection{Energy analysis of the Cole-Cole model}
{In \cite{LiHuangLinCOLE}, the following energy function was defined for the system (\ref{eq:MaxColeColeH})-(\ref{eq:MaxColeColeP}) with the periodic boundary condition and the initial conditions  (\ref{eq:iniMax}) under the assumption $F_1=F_2=F_3=0$}
\begin{eqnarray}
\label{def:energyLi}
\widetilde{\mathcal{E}}(H(t),E(t),P(t)):=\int_{\Omega} \epsilon_0\left(\epsilon_s-\epsilon_{\infty}\right)
\left( \epsilon_0\epsilon_{\infty}E^2+\mu_0H^2\right)\,dx
+\int_{\Omega}P^2\,dx,
\end{eqnarray}
and was shown to satisfy the relation
\begin{eqnarray}
\label{def:energyLile}
\widetilde{\mathcal{E}}(H(t),E(t),P(t))\le \widetilde{\mathcal{E}}(H(0),E(0),P(0)),\quad \forall\, t\in[0,\,T].
\end{eqnarray}
However, $\widetilde{\mathcal{E}}$ is not necessarily monotonically decreasing in time. Indeed, as can be seen in Figure \ref{enery_compare} (d)-(f), $\widetilde{\mathcal{E}}$ can be increasing; similar phenomena have been observed in \cite{WangHuang}. In contrast to $\widetilde{\mathcal{E}}$, the energy ${\mathcal{E}}$ defined in the following theorem is guaranteed to be monotonically decreasing.

\begin{theorem} (Monotonically decreasing energy of (\ref{eq:MaxColeColeH})-(\ref{eq:MaxColeColeP}))\label{stableMaxwellCole}
Without forcing, the system (\ref{eq:MaxColeColeH})-(\ref{eq:MaxColeColeP}) with the periodic boundary condition and the initial conditions (\ref{eq:iniMax})  is stable. Specifically, under these assumptions, solutions of the Cole-Cole model satisfy the energy condition 
\begin{equation}\label{eq:dtEnergy}
\frac{d}{d t}\mathcal{E}(H,E,P,\psi)=-\frac{\pi}{\sin(\pi\alpha)}\int_{\Omega}\int_0^{\infty} \frac{\tau_0^{\alpha}}{\epsilon_0(\epsilon_s-\epsilon_{\infty})}
\lambda^{2-\alpha}\psi^2\,d\lambda\,dx\le 0,
\end{equation}
where the energy $\mathcal{E}$ is the sum of the classic energy $\mathcal{E}_1$ and the  diffusive energy $\mathcal{E}_2$
\begin{equation}\label{eq:defeaxctenergy}
\mathcal{E}(H,E,P,\psi)=\mathcal{E}_1(H,E,P)+\mathcal{E}_2(\psi),
\end{equation}
with 
\begin{eqnarray}
\label{def:energy1}
\mathcal{E}_1(H,E,P)&:=&
\frac{1}{2}\int_{\Omega}
 \epsilon_0\epsilon_{\infty}E^2+\mu_0H^2
 +\frac{P^2}{\epsilon_0(\epsilon_s-\epsilon_{\infty})}\,dx,\\
\label{def:energy2}
\mathcal{E}_2(\psi)&:=& \frac{\pi}{2\sin(\pi\alpha)}
\int_{\Omega}\int_0^{\infty}\frac{\tau_0^{\alpha}\lambda^{1-\alpha}}{\epsilon_0(\epsilon_s-\epsilon_{\infty})}\psi^2\,d\lambda\,dx.
\end{eqnarray}
\iffalse%&&&&&&&&&&&&&&&&&&&&&&&&
with the auxiliary  \YOr{unknown}{variable}  $\psi$ given in (\ref{def:psidr}), then
\begin{equation}\label{eq:defeaxctenergy}
\mathcal{E}(H,E,P,\psi)=\mathcal{E}_1(H,E,P)+\mathcal{E}_2(\psi)
\end{equation}
is the energy function for the Cole-Cole model  (\ref{eq:MaxColeColeH})-(\ref{eq:MaxColeColeP}), which satisfies
\begin{equation}\label{eq:dtEnergy}
\frac{d}{d t}\mathcal{E}(H,E,P,\psi)=-\frac{\pi \tau_0^{\alpha}}{\sin(\pi\alpha)}\int_{\Omega}
\int_0^{\infty}\lambda^{2-\alpha}\psi^2\,d\lambda\,dx< 0.
\end{equation}
\fi%&&&&&&&&&&&&&&&&&&&&&&&&&&&&&&&&&&&&&&&&
\end{theorem}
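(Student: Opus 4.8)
The plan is to prove the identity \eqref{eq:dtEnergy} by a direct energy estimate: I would differentiate the classical energy $\mathcal{E}_1$ of \eqref{def:energy1} and the diffusive energy $\mathcal{E}_2$ of \eqref{def:energy2} separately in time, and then add the two results so that the coupling terms cancel and only the manifestly nonpositive dissipation survives. Throughout, I would assume enough regularity of the solution (in particular decay in $\lambda$) to justify differentiation under the integral sign and the interchange of the $x$- and $\lambda$-integrations; this is where the only genuine analytic care is required.

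First I would differentiate \eqref{def:energy1} to get $\frac{d}{dt}\mathcal{E}_1=\int_{\Omega}\epsilon_0\epsilon_{\infty}E\,\partial_t E+\mu_0 H\,\partial_t H+\frac{P\,\partial_t P}{\epsilon_0(\epsilon_s-\epsilon_{\infty})}\,dx$. Substituting $\mu_0\partial_t H=\partial_x E$ and $\epsilon_0\epsilon_{\infty}\partial_t E=\partial_x H-\partial_t P$ from \eqref{eq:MaxColeColeH}--\eqref{eq:MaxColeColeE}, the magnetic and electric flux terms combine into $\int_{\Omega}\partial_x(EH)\,dx$, which vanishes by the periodic boundary condition. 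This leaves $\frac{d}{dt}\mathcal{E}_1=\int_{\Omega}\bigl(-E+\frac{P}{\epsilon_0(\epsilon_s-\epsilon_{\infty})}\bigr)\partial_t P\,dx$. Using \eqref{eq:MaxColeColeP} in the form $\epsilon_0(\epsilon_s-\epsilon_{\infty})E=\tau_0^{\alpha}\partial_t^{\alpha}P+P$ to eliminate $E$, the $P$-terms cancel and I obtain $\frac{d}{dt}\mathcal{E}_1=-\int_{\Omega}\frac{\tau_0^{\alpha}}{\epsilon_0(\epsilon_s-\epsilon_{\infty})}(\partial_t^{\alpha}P)\,\partial_t P\,dx$; finally I rewrite $\partial_t^{\alpha}P=\int_0^{\infty}\psi\,d\lambda$ via \eqref{eq:DRDreppsi}.

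Next I would differentiate \eqref{def:energy2} and insert the auxiliary ODE \eqref{eq:drpsider}, namely $\partial_t\psi=-\lambda\psi+\mu_\alpha(\lambda)\partial_t P$, into the factor $\psi\,\partial_t\psi$. The $-\lambda\psi^2$ contribution produces precisely $-\frac{\pi}{\sin(\pi\alpha)}\int_{\Omega}\int_0^{\infty}\frac{\tau_0^{\alpha}\lambda^{2-\alpha}}{\epsilon_0(\epsilon_s-\epsilon_{\infty})}\psi^2\,d\lambda\,dx$, which is the claimed right-hand side of \eqref{eq:dtEnergy}. The remaining cross term carries the factor $\frac{\pi}{\sin(\pi\alpha)}\lambda^{1-\alpha}\mu_\alpha(\lambda)$, which by the definition $\mu_\alpha(\lambda)=\frac{\sin(\pi\alpha)}{\pi}\lambda^{\alpha-1}$ equals $1$; hence it reduces to $\int_{\Omega}\frac{\tau_0^{\alpha}}{\epsilon_0(\epsilon_s-\epsilon_{\infty})}\partial_t P\int_0^{\infty}\psi\,d\lambda\,dx$, which is exactly $-\frac{d}{dt}\mathcal{E}_1$ from the previous step. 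Adding $\frac{d}{dt}\mathcal{E}_1$ and $\frac{d}{dt}\mathcal{E}_2$ therefore cancels the coupling and yields \eqref{eq:dtEnergy}, whose right-hand side is nonpositive since $\lambda^{2-\alpha}>0$ and $\sin(\pi\alpha)>0$ for $0<\alpha<1$.

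The computation is otherwise routine; the main obstacle is the analytic justification of the manipulations involving the improper $\lambda$-integral, namely differentiating $\mathcal{E}_2$ under the integral sign and swapping the order of integration in the cross term. Near $\lambda=0$ the weights $\lambda^{1-\alpha}$ and $\lambda^{2-\alpha}$ are integrable because $0<\alpha<1$, so the only real requirement is sufficient decay of $\psi(x,t;\lambda)$ as $\lambda\to\infty$, which follows from the representation \eqref{def:psidr} under the assumed regularity $P\in H^1(0,T;L^2(\Omega))$. It is worth emphasizing that the diffusive energy $\mathcal{E}_2$, with its specific weight $\lambda^{1-\alpha}$, is engineered precisely so that its cross term matches $-\frac{d}{dt}\mathcal{E}_1$; this exact cancellation is the crux of the argument and the reason the newly defined total energy, unlike $\widetilde{\mathcal{E}}$, is genuinely monotone.
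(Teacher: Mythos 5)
Your proposal is correct and follows essentially the same route as the paper's proof: differentiate $\mathcal{E}_1$, use Maxwell's equations with periodicity to reduce to the coupling term $\int_\Omega E\,\partial_t P\,dx$, eliminate $E$ via the polarization equation and the representation $\partial_t^\alpha P=\int_0^\infty\psi\,d\lambda$, then pair the auxiliary ODE \eqref{eq:drpsider} against $\frac{\pi\lambda^{1-\alpha}}{\sin(\pi\alpha)}\frac{\tau_0^\alpha}{\epsilon_0(\epsilon_s-\epsilon_\infty)}\psi$ so that the cross terms cancel and only the $\lambda^{2-\alpha}\psi^2$ dissipation remains. Your added remarks on justifying differentiation under the integral sign and the $\lambda$-integrability are a welcome point of care that the paper leaves implicit, but they do not change the argument.
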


\begin{proof}
Clearly,  the function $\mathcal{E}$ is positive definite. Multiplying (\ref{eq:MaxColeColeH}) with $H$  and (\ref{eq:MaxColeColeE}) with $E$, we have
\begin{eqnarray*}
\int_{\Omega} \mu_0 \pr_tH\, H\,dx-\int_{\Omega} \partial_x E \, H\,dx=0,\\
\int_{\Omega}\left(\epsilon_0\epsilon_{\infty}\pr_tE\, E
+\pr_tP\, E\right)\,dx-\int_{\Omega}\partial_x H\,E\,dx=0.
\end{eqnarray*}
Adding the above two equations together, followed by integrating  by parts over $\Omega$ and the periodic  boundary condition, we obtain
\begin{equation}\label{eq:addtwoerr}
\int_{\Omega} \left( \epsilon_0\epsilon_{\infty}\pr_tE\, E+\mu_0\pr_tH\, H\right)\,dx
+\int_{\Omega}
\pr_tP\, E\,dx=0.
\end{equation}
In order to estimate the last terms on the left-hand side of (\ref{eq:addtwoerr}), multiplying
 (\ref{eq:MaxColeColeP}) with $\pr_tP$ and integrating on $\Omega$, we see
$$
 \int_{\Omega} E\, \pr_t P\,dx=\int_{\Omega} \frac{\tau_0^{\alpha}}{\epsilon_0(\epsilon_s-\epsilon_{\infty})} \frac{\pr^{\alpha}P}{\pr t^{\alpha}}\,\partial_tP\,dx+ \int_{\Omega} \frac{P}{\epsilon_0(\epsilon_s-\epsilon_{\infty})}\, \pr_tP\,dx.
$$
Injecting the above equation into (\ref{eq:addtwoerr}) and using (\ref{eq:DRDreppsi}) lead to
\begin{equation*}
\begin{aligned}
&\frac{1}{2}\frac{d}{dt}\int_{\Omega} 
\epsilon_0\epsilon_{\infty}
E^2+\mu_0H^2
+\frac{P^2}{\epsilon_0(\epsilon_s-\epsilon_{\infty})}  \,dx\\
=&\,-\int_{\Omega}  \frac{\tau_0^{\alpha}}{\epsilon_0(\epsilon_s-\epsilon_{\infty})} \frac{\pr^{\alpha}P}{\pr t^{\alpha}}\,\partial_tP\,dx
=-\int_{0}^{\infty} \int_{\Omega} \frac{\tau_0^{\alpha}}{\epsilon_0(\epsilon_s-\epsilon_{\infty})}
\psi
\,\partial_tP\,dx\,d\lambda,
\end{aligned}
\end{equation*}
or equivalently by using (\ref{def:energy1})
\begin{equation}\label{enproof}
\frac{d}{dt}\mathcal{E}_1
=-\int_{0}^{\infty} \int_{\Omega} \frac{\tau_0^{\alpha}}{\epsilon_0(\epsilon_s-\epsilon_{\infty})}
\psi
\,\partial_tP\,dx\,d\lambda.
\end{equation}
Multiplying (\ref{eq:drpsider}) with $ \frac{\tau_0^{\alpha}}{\epsilon_0(\epsilon_s-\epsilon_{\infty})}
\psi$ and integrating on $\Omega$ give
$$
\frac{\pi \lambda^{1-\alpha}}{\sin(\pi\alpha)}
\int_{\Omega} \frac{\tau_0^{\alpha}}{\epsilon_0(\epsilon_s-\epsilon_{\infty})}  \psi\,\pr_t\psi\,dx
+
\frac{\pi \lambda^{2-\alpha}}{\sin(\pi\alpha)}
\int_{\Omega} \frac{\tau_0^{\alpha}}{\epsilon_0(\epsilon_s-\epsilon_{\infty})} \psi^2\,dx
=\int_{\Omega} \frac{\tau_0^{\alpha}}{\epsilon_0(\epsilon_s-\epsilon_{\infty})} \psi\,\pr_t P\,dx,
$$
which together with (\ref{enproof}) and the definition of $\mathcal{E}_2$ implies
$$
\frac{d}{dt}\left(\mathcal{E}_1+\mathcal{E}_2\right)
=-\frac{\pi}{\sin(\pi\alpha)}\int_{\Omega}
\int_0^{\infty}\frac{\tau_0^{\alpha} \lambda^{2-\alpha}}{\epsilon_0(\epsilon_s-\epsilon_{\infty})}
\psi^2\,d\lambda\,dx\le0.
$$
\end{proof}

\begin{rem} The function $\mathcal{E}_1$, which is not monotonically decreasing, is related to the classical energy defined in \cite{LiHuangLinCOLE}. In comparison, the energy derived above is monotonically decreasing because it identifies and includes the diffusive energy $\mathcal{E}_2$. To our knowledge, this is the first time the complete total energy for the Cole-Cole model  is derived. 
The properties of $\mathcal{E}_2 \ge 0$  and $-d\mathcal{E}/dt \ge 0$  motivate us to design a positive-preserving numerical method for the diffusive approximation, so that the diminishing  property of the total energy can be preserved  for the Cole-Cole model.
\end{rem}

\section{Semi-discrete DG scheme}
 \label{SpatialDG}
In this section, we introduce a DG method for spatial discretization of the Cole-Cole model  (\ref{eq:MaxColeColeH})-(\ref{eq:MaxColeColeP}) and establish the optimal error estimate of the semi-discrete scheme.

Suppose the computational domain $\Omega=[x_-,~x_+]$ is partitioned into $x_-=x_{1/2}<x_{2/3}<\cdots<x_{M+1/2}=x_+$ with equal mesh size  $h=(x_+-x_-)/M$. The $j$-th cell is denoted by $I_j=[x_{j-1/2},~x_{j+1/2}]$  with the center $x_j=(x_{j-1/2}+x_{j+1/2})/2$, $j=1,\cdots,M$. The piecewise-polynomial space $V_h$ is defined as
$$
V_h=\left\{v:v|_{I_j}\in \mathbb{P}^k(I_j),\quad j=1,\cdots,M\right\},\  k\in\mathbb{N},\,\  k\ge 1,
$$
where $ \mathbb{P}^k$ is the space of polynomials of degree up to $k$ in each cell $I_j$. For any $v\in V_h$, we denote the limit values of $v$ at $x_{j+1/2}$ from the element $I_{j+1}$ and the element $I_j$ by $v_{j+1/2}^+$ and $v_{j+1/2}^-$, respectively. The jump and average at $x_{j+1/2}$ are denoted by  $[v]_{j+1/2}=v_{j+1/2}^+-v_{j+1/2}^-$ and $\{v\}_{j+\frac{1}{2}}=\frac{v_{j+1/2}^++v_{j+1/2}^-}{2}$, respectively. For the average and jump functions, the following identity holds
\beq\label{eq:fluxsum}
\{a\}[b]+\{b\}[a]=[ab].
\eeq

The DG scheme  for problem  (\ref{eq:MaxColeColeH})-(\ref{eq:MaxColeColeP}) is to find
$U_h(\cdot,t)\in V_h$ with $U=H,\,E,\,P,\,\Psi$ such that for all $v\in V_h$ and all $j$, the following holds
\begin{eqnarray}
\label{eq:semiDGH}
\int_{I_j}\mu_0\frac{\pr  H_h}{\pr t}\,v\,dx
+\int_{I_j} E_h\, \frac{\pr v}{\pr x} \,dx-\widehat{E_h}\,v^{-}|_{x_{j+1/2}}
+\widehat{E_h}\,v^{+}|_{x_{j-1/2}}=\int_{I_j}F_{1}\,v\,dx,\\
\label{eq:semiDGE}
\int_{I_j}\epsilon_0\epsilon_{\infty}\frac{\pr E_h}{\pr t}\,v\,dx +\int_{I_j}\frac{ \pr  P_h}{\pr t}\,v\,dx+\int_{I_j} H_h\, \frac{\pr v}{\pr x}\,dx-\widehat{H_h}v^{-}|_{x_{j+1/2}}
+\widehat{H_h}\,v^{+}|_{x_{j-1/2}}=\int_{I_j}F_{2}\,v\,dx,\\
\label{eq:semiDGP}
\int_{I_j}\int_{0}^{\infty}\frac{\tau_0^{\alpha}}{\epsilon_0(\epsilon_{s}-\epsilon_{\infty})}\psi_h\,v\,d\lambda\,dx+\int_{I_j}\frac{P_h}{\epsilon_0(\epsilon_{s}-\epsilon_{\infty})}\,v\,dx
-\int_{I_j}E_h\,v\,dx=\int_{I_j}\frac{F_{3}}{\epsilon_0(\epsilon_{s}-\epsilon_{\infty})}\,v\,dx,\\
\label{eq:drpsiderdg}
\int_{I_j}\frac{\tau_0^{\alpha}}{\epsilon_0(\epsilon_{s}-\epsilon_{\infty})}\frac{\partial {\psi}_h}{\partial t}\,v\,dx+\lambda \int_{I_j}\frac{\tau_0^{\alpha}}{\epsilon_0(\epsilon_{s}-\epsilon_{\infty})} {\psi}_h\,v\,dx-\frac{\sin(\pi\alpha)}{\pi}\lambda^{\alpha-1}\int_{I_j}\frac{\tau_0^{\alpha}}{\epsilon_0(\epsilon_{s}-\epsilon_{\infty})}\frac{\partial P_h}{\partial t}\,v\,dx=0,
\end{eqnarray}
where  $\hat{\cdot}$ indicates the numerical flux, which is chosen to be the upwind flux 
\beq\label{eq:upwind}
\widehat{E_h}=\{E_h\}
+\frac{1}{2}\sqrt{\frac{\mu_0}{\epsilon_0\epsilon_{\infty}}}\,[H_h],\quad \widehat{H_h}=\{H_h\}
+\frac{1}{2}\sqrt{\frac{\epsilon_0\epsilon_{\infty}}{\mu_0}}\,[E_h].
\eeq
In the above, we have used (\ref{eq:drpsider}) and (\ref{eq:DRDreppsi}). Note that there is a factor of $\frac{\tau_0^{\alpha}}{\epsilon_0(\epsilon_{s}-\epsilon_{\infty})}$ difference between (\ref{eq:drpsider}) and (\ref{eq:drpsiderdg}).

The following theorem shows the energy decreasing property of the discretized system. 

\begin{theorem} (semi-discrete stability)
\label{thm:StaDG}
Under the assumptions $F_1=F_2=F_3=0$, the periodic boundary condition, the initial condition (\ref{eq:iniMax}) and the upwind numerical flux defined in \eqref{eq:upwind}, the solution of the semi-discrete DG scheme (\ref{eq:semiDGH})-(\ref{eq:drpsiderdg}) satisfies the following stability 
\begin{equation*}\label{eq:stasemiDG}
\begin{aligned}
\frac{d }{d t}\mathcal{E}(H_h(t),E_h(t),P_h(t),\psi_h(t))=&-\frac{\pi }{\sin(\pi\alpha)}
\int_{\Omega}\int_{0}^{\infty} \frac{\tau_0^{\alpha}\lambda^{2-\alpha}}{\epsilon_0\left(\epsilon_s-\epsilon_{\infty}\right)}
\psi_h^2(t)\,d\lambda\,dx\\
&-
\sum_{j=1}^M\mathcal{M}_{j+1/2}(H_h(t),\,E_h(t))\le 0,
\end{aligned}
\end{equation*}
where $\mathcal{E}$ is given in (\ref{eq:defeaxctenergy}) and
\begin{equation}
\label{eq:defMmP}
\mathcal{M}_{j+1/2}(H_h,\,E_h)=\frac{1}{2}\left(\sqrt{\frac{\mu_0}{\epsilon_0\epsilon_{\infty}}}
\left[H_h\right]^2
+\sqrt{\frac{\epsilon_0\epsilon_{\infty}}{\mu_0}}
\left[E_h\right]^2\right)_{j+1/2}\ge 0.
\end{equation}
\end{theorem}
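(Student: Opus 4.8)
The plan is to reproduce the energy identity of Theorem~\ref{stableMaxwellCole} at the discrete level, the only genuinely new ingredient being the interfacial dissipation produced by the upwind flux. First I would test the magnetic equation \eqref{eq:semiDGH} with $v=H_h$ and the electric equation \eqref{eq:semiDGE} with $v=E_h$, sum over all cells $I_j$, and add the two results. The volume derivative terms combine into $\sum_j\int_{I_j}\partial_x(E_hH_h)\,dx$, which collapses to cell-boundary values; grouping these together with the numerical-flux contributions at each interior node $x_{j+1/2}$ (and invoking periodicity so that no physical boundary term survives) leaves, at each node, the expression $-[E_hH_h]+\widehat{E_h}[H_h]+\widehat{H_h}[E_h]$.

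Second, I would simplify this node expression. Using the identity \eqref{eq:fluxsum} to write $[E_hH_h]=\{E_h\}[H_h]+\{H_h\}[E_h]$, the contribution becomes $(\widehat{E_h}-\{E_h\})[H_h]+(\widehat{H_h}-\{H_h\})[E_h]$, and substituting the upwind fluxes \eqref{eq:upwind} converts it exactly into the manifestly nonnegative quantity $\mathcal{M}_{j+1/2}$ of \eqref{eq:defMmP}. At this stage the summed $H_h$/$E_h$ identity reads $\tfrac12\tfrac{d}{dt}\int_\Omega(\mu_0H_h^2+\epsilon_0\epsilon_\infty E_h^2)\,dx+\int_\Omega E_h\,\partial_tP_h\,dx=-\sum_j\mathcal{M}_{j+1/2}$. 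To absorb the cross term $\int_\Omega E_h\,\partial_tP_h\,dx$, I would test the polarization relation \eqref{eq:semiDGP} with $v=\partial_tP_h\in V_h$ and sum over $j$; this expresses $\int_\Omega E_h\,\partial_tP_h\,dx$ as $\tfrac12\tfrac{d}{dt}\int_\Omega\frac{P_h^2}{\epsilon_0(\epsilon_s-\epsilon_\infty)}\,dx$ plus the diffusive coupling $\int_\Omega\int_0^\infty\frac{\tau_0^\alpha}{\epsilon_0(\epsilon_s-\epsilon_\infty)}\psi_h\,\partial_tP_h\,d\lambda\,dx$, thereby promoting the quadratic terms into the full classical energy $\mathcal{E}_1$ of \eqref{def:energy1}.

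Third, to control the diffusive coupling I would test the auxiliary equation \eqref{eq:drpsiderdg} with $v=\psi_h$, multiply through by the weight $\frac{\pi}{\sin(\pi\alpha)}\lambda^{1-\alpha}$, integrate in $\lambda$ over $(0,\infty)$, and sum over $j$. The time-derivative term then reproduces $\frac{d}{dt}\mathcal{E}_2$ from \eqref{def:energy2}; the $\lambda\psi_h$ term yields the nonnegative dissipation $\frac{\pi}{\sin(\pi\alpha)}\int_\Omega\int_0^\infty\frac{\tau_0^\alpha\lambda^{2-\alpha}}{\epsilon_0(\epsilon_s-\epsilon_\infty)}\psi_h^2\,d\lambda\,dx$; and the $\lambda^{\alpha-1}\partial_tP_h$ term produces exactly the same diffusive coupling integral as above, but with the opposite sign. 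Adding the $\mathcal{E}_1$ and $\mathcal{E}_2$ identities therefore cancels the coupling term precisely, leaving the asserted formula for $\frac{d}{dt}\mathcal{E}$, whose right-hand side is minus the $\lambda$-dissipation minus $\sum_j\mathcal{M}_{j+1/2}$, both nonnegative.

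I expect the main obstacle to be bookkeeping rather than conceptual: carefully collecting the volume boundary values and the flux terms across adjacent cells with the correct orientation of the jumps (recall $[v]=v^+-v^-$) so that the node contribution assembles into the symmetric nonnegative form $\mathcal{M}_{j+1/2}$, and keeping the weight $\frac{\tau_0^\alpha}{\epsilon_0(\epsilon_s-\epsilon_\infty)}$ together with the $\lambda$-factor $\frac{\pi}{\sin(\pi\alpha)}\lambda^{1-\alpha}$ mutually consistent between the $\mathcal{E}_2$ computation and the coupling term, so that the cancellation is exact. Everything else is the discrete analogue of the continuous argument, and since the identity is \emph{exact} no inverse inequalities or projection estimates are required.
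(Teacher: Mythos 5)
Your proposal is correct and follows essentially the same route as the paper's proof: testing with $v=H_h$, $v=E_h$, $v=\partial_t P_h$ and $v=\psi_h$, converting the flux terms into $\mathcal{M}_{j+1/2}$ via the identity \eqref{eq:fluxsum} and the upwind flux \eqref{eq:upwind}, and weighting the auxiliary equation by $\frac{\pi\lambda^{1-\alpha}}{\sin(\pi\alpha)}$ before integrating in $\lambda$ so that the coupling term $\int_\Omega\int_0^\infty\frac{\tau_0^\alpha}{\epsilon_0(\epsilon_s-\epsilon_\infty)}\psi_h\,\partial_t P_h\,d\lambda\,dx$ cancels exactly. The only cosmetic difference is bookkeeping: you collapse the volume terms and fluxes node by node into $-[E_hH_h]+\widehat{E_h}[H_h]+\widehat{H_h}[E_h]$ before invoking \eqref{eq:fluxsum}, whereas the paper first writes $\widehat{E_h}[H_h]+\widehat{H_h}[E_h]=[H_hE_h]+\mathcal{M}_{j+1/2}$ and then cancels the telescoping sum $\sum_j\int_{I_j}\partial_x(H_hE_h)\,dx+\sum_j[H_hE_h]_{j+1/2}=0$; these are the same computation.
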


\begin{proof}
Setting  $v=H_h$ in (\ref{eq:semiDGH}), $v=E_h$ in (\ref{eq:semiDGE}) and summing up over all elements lead to
\begin{equation}\label{eq:sumelestaHE}
\begin{aligned}
-\int_{\Omega}\pr_tP_hE_h\,dx=
&\int_{\Omega}\mu_0\pr_tH_hH_h\,dx
+\int_{\Omega}\epsilon_0\epsilon_{\infty}\pr_tE_hE_h\,dx \\
&+\sum_{j=1}^M\int_{I_j}\pr_x\left(H_hE_h\right)dx
+\sum_{j=1}^{M}\left(\widehat{E_h}[H_h]
+\widehat{H_h}[E_h]\right)_{j+1/2}.
\end{aligned}
\end{equation}
According to (\ref{eq:fluxsum}) and the numerical flux defined in \eqref{eq:upwind}, we have 
\begin{eqnarray}
\label{eq:sumfluxdg}
\left(\widehat{E_h}[H_h]
+\widehat{H_h}[E_h]\right)_{j+1/2}
=\left[H_hE_h\right]_{j+1/2}+\mathcal{M}_{j+1/2}(H_h,\,E_h),
\end{eqnarray}
where $\mathcal{M}_{j+1/2}(H_h,\,E_h)$ is given in (\ref{eq:defMmP}).
Since
$$
\sum_{j=1}^M\int_{I_j}\pr_x\left(H_hE_h\right)dx
+\sum_{j=1}^{M}[H_hE_h]_{j+1/2}=0,
$$
\eqref{eq:sumelestaHE} becomes
\begin{equation}\label{eq:sumelestaHEred0}
\begin{aligned}
&\int_{\Omega}\mu_0\pr_tH_hH_h\,dx
+\int_{\Omega}\epsilon_0\epsilon_{\infty}\pr_tE_hE_h\,dx
=-\int_{\Omega}\pr_tP_hE_h\,dx-\sum_{j=1}^M\mathcal{M}_{j+1/2}(H_h,\,E_h).
\end{aligned}
\end{equation}
Setting $v=\pr_tP_h$ in (\ref{eq:semiDGP}) and summing over all elements and applying \eqref{eq:sumelestaHEred0} and the definition of $\mathcal{E}_1$ in (\ref{def:energy1}), we obtain
\iffalse%===========> deleted by MYO
\begin{equation*}\label{eq:sumHEcons}
\begin{aligned}
&\,\frac{1}{2}\frac{d}{dt}
\left(\int_{\Omega}\mu_0H_h^2
+\epsilon_0\epsilon_{\infty}E_h^2+\frac{P_h^2}{\epsilon_0\left(\epsilon_s-\epsilon_{\infty}\right)}\,dx\right)\\
=&\,-\int_{\Omega}\int_{0}^{\infty}\frac{\tau_0^{\alpha}}{\epsilon_0(\epsilon_{s}-\epsilon_{\infty})}\psi_h\,
\pr_tP_h\,d\lambda\,dx-
\sum_{j=1}^M\mathcal{M}_{j+1/2}(H_h,\,E_h).
\end{aligned}
\end{equation*}
Combining the definition of $\mathcal{E}_1$ in (\ref{def:energy1}), we further have
\fi%<============================= deleted by MYO
\begin{equation}\label{eq:sumHEcon1e}
\begin{aligned}
\frac{d \mathcal{E}_1(H_h,E_h,P_h)}{d t}
=-\int_{\Omega}\int_{0}^{\infty}\frac{\tau_0^{\alpha}}{\epsilon_0(\epsilon_{s}-\epsilon_{\infty})}\psi_h\,
\pr_tP_h\,d\lambda\,dx
-\sum_{j=1}^M\mathcal{M}_{j+1/2}(H_h,\,E_h).
\end{aligned}
\end{equation}
Setting $v=\psi_h$ in (\ref{eq:drpsiderdg}) and summing up over all elements, we have
\begin{equation*}\label{eq:intpsipp}
\int_{\Omega}\frac{\tau_0^{\alpha}}{\epsilon_0(\epsilon_{s}-\epsilon_{\infty})}\partial_t P_h\,\psi_h\,dx
=\frac{\pi\lambda^{1-\alpha}}{\sin(\pi\alpha)}\int_{\Omega} \frac{\tau_0^{\alpha}}{\epsilon_0(\epsilon_{s}-\epsilon_{\infty})} \partial_t\psi_h\,\psi_h\,dx
+\frac{\pi\lambda^{2-\alpha}}{\sin(\pi\alpha)}\int_{\Omega}\frac{\tau_0^{\alpha}}{\epsilon_0(\epsilon_{s}-\epsilon_{\infty})}\psi_h^2\,dx,
\end{equation*}
and hence \eqref{eq:sumHEcon1e} becomes
\begin{equation}\label{eq:dte1psiptp}
\begin{aligned}
\frac{d \mathcal{E}_1(H_h,E_h,P_h)}{d t}
=&- \frac{\pi}{\sin(\pi\alpha)}
\int_{\Omega}\frac{\tau_0^{\alpha}}{\epsilon_0(\epsilon_{s}-\epsilon_{\infty})}\left(\int_{0}^{\infty}
\lambda^{1-\alpha}\,\partial_t\psi_h\,\psi_h
+\lambda^{2-\alpha}\,\psi_h^2\,d\lambda\right)\,dx\\
&-\sum_{j=1}^M\mathcal{M}_{j+1/2}(H_h,\,E_h).
\end{aligned}
\end{equation}
We complete the proof by combining  (\ref{eq:dte1psiptp}), (\ref{eq:defeaxctenergy})
and  (\ref{def:energy2}).
\end{proof}

In the following, we estimate the errors of the semi-discrete DG scheme. First, we  introduce the standard $L^2$-projection of a function $\varphi\in L^2(\Omega)$ into $V_h$, namely for all $j$
\begin{equation}\label{eq:ProL2}
\int_{I_j}(\mathcal{P}\varphi-\varphi)\,v\,dx=0,\quad \forall \,v\in  \mathbb{P}^{k}(I_j).
\end{equation}
The Gauss-Radau projections $\mathcal{P}^{\pm}$ defined from $\mathcal{H}^1(\Omega)$ onto $V_h$ are defined as
\begin{eqnarray}\label{eq:ProPp}
\int_{I_j}(\mathcal{P}^+\varphi-\varphi)\,v\,dx=0,\quad \mathcal{P}^{+}\varphi(x^{+}_{j-1/2})=\varphi(x_{j-1/2}),\quad  \forall \,v\in \mathbb{P}^{k-1}(I_j),\\
\label{eq:ProPm}
\int_{I_j}(\mathcal{P}^-\varphi-\varphi)\,v\,dx=0,\quad \mathcal{P}^{-}\varphi(x^-_{j+1/2})=\varphi(x_{j+1/2}),\quad\forall \,v\in  \mathbb{P}^{k-1}(I_j).
\end{eqnarray}
For $\mathcal{A}=\mathcal{P}$ or $\mathcal{A}=\mathcal{P}^{\pm}$, it is known that  (see \cite{Ciarlet})
\begin{equation}\label{eq:inequalityP}
\left\|\mathcal{A}\varphi-\varphi\right\|_0
+h\left\|\mathcal{A}\varphi-\varphi\right\|_{L^{\infty}(\Omega)}
+h^{\frac{1}{2}}\left\|\mathcal{A}\varphi-\varphi\right\|_{\Gamma_h}\le Ch^{k+1}\left\|\varphi\right\|_{\mathcal{H}^{k+1}(\Omega)}, \quad \forall\, \varphi\in \mathcal{H}^{k+1}(\Omega),
\end{equation}
and
\begin{equation}\label{eq:StaProine}
\left\|\mathcal{A}\varphi\right\|_{L^{\infty}(\Omega)}\le
\begin{cases}
C\|\varphi\|_{L^{\infty}(\Omega)},   & \forall \, \varphi\in L^{\infty}(\Omega),\quad\quad \mathcal{A}=\mathcal{P},\\
C\|\varphi\|_{L^{\infty}(\Omega)},   & \forall \, \varphi\in W^{1,\infty}(\Omega), \quad \mathcal{A}=\mathcal{P}^{\pm},
\end{cases}
\end{equation}
where  $\Gamma_h$ indicates the set of boundary points of all cell elements and  $C=C(\Omega,k)$ is a positive constant independent of $h$. Here and hereafter, we use $\|\cdot\|$ stands for the $L^2$-norm on $\Omega$. In the implementation,  $\mathcal{P}_P$ and $\mathcal{P}_{\psi}$ are taken to be the $L^2$ projection $\mathcal{P}$, and
\begin{eqnarray}
\label{eq:proPE}
\mathcal{P}_E&=&\frac{1}{2}\mathcal{P}^{+}
\left(E+\sqrt{\frac{\mu_0}{\epsilon_0\epsilon_{\infty}}}\,H\right)+\frac{1}{2}\mathcal{P}^{-}
\left(E-\sqrt{\frac{\mu_0}{\epsilon_0\epsilon_{\infty}}}\,H\right),\\
\label{eq:proPH}
\mathcal{P}_H&=&\frac{1}{2}\mathcal{P}^{+}\left(H+
\sqrt{\frac{\epsilon_0\epsilon_{\infty}}{\mu_0}}\,E\right)
+\frac{1}{2}\mathcal{P}^{-}\left(H-\sqrt{\frac{\epsilon_0\epsilon_{\infty}}{\mu_0}}\,E\right).
\end{eqnarray}

\begin{theorem} (semi-discrete error estimate)
\label{thm:ErrorBDF2DGAPP}
Let $H,\, E,\, P$ be the solutions  of the Cole-Cole model  (\ref{eq:MaxColeColeH})-(\ref{eq:MaxColeColeP}) and $H_h,\,E_h,\,P_h$ be the solutions of  the semi-discrete DG scheme (\ref{eq:semiDGH})-(\ref{eq:drpsiderdg}) with constant coefficients and the initial data given in (\ref{eq:iniMax}). If $\psi_h(0)=0$ and  the initial data of $P_h(0)$, $E_h(0)$  and $H_h(0)$ are obtained by the projections of  $\mathcal{P}_P$, $\mathcal{P}_E$ and $\mathcal{P}_H$, respectively, then the semi-discrete DG scheme (\ref{eq:semiDGH})-(\ref{eq:drpsiderdg}) with the periodic boundary condition and the upwind numerical flux \eqref{eq:upwind} satisfies the following error estimate
\beq\label{eq:ErrorBDF2DGAPP}
\begin{aligned}
\left\|E(t)-E_h(t)\right\|^2
+\left\|H(t)-H_h(t)\right\|^2
+\left\|P(t)-P_h(t)\right\|^2\le C h^{2k+2},\quad \forall\, t\ge0,
\end{aligned}
\eeq
where $C$ is a positive constant independent of $h$.
\end{theorem}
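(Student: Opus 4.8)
The plan is to carry out a standard energy-based error analysis for the coupled Maxwell--polarization--diffusive system, where the decisive ingredient is the specially tailored projection operators \eqref{eq:proPE}--\eqref{eq:proPH}. First I would split each error through the appropriate projection, writing $H-H_h=\eta_H+\xi_H$, $E-E_h=\eta_E+\xi_E$, $P-P_h=\eta_P+\xi_P$ and $\psi-\psi_h=\eta_\psi+\xi_\psi$, where $\eta_U=U-\mathcal{P}_U U$ is the projection error and $\xi_U=\mathcal{P}_U U-U_h\in V_h$ is the discrete part. The projection errors are controlled at optimal order by \eqref{eq:inequalityP}, so the real work is bounding the $V_h$-parts $\xi_U$. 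The choice $\mathcal{P}_P=\mathcal{P}_\psi=\mathcal{P}$ (the $L^2$ projection) makes $\eta_P,\eta_\psi$ orthogonal to $V_h$, which annihilates every volume term in which they are paired against a test function from $V_h$; in particular the awkward cross term $\int_\Omega\partial_t\eta_P\,\xi_E\,dx$ vanishes, because $\partial_t\int_\Omega\eta_P\,\xi_E\,dx=0$ and $\int_\Omega\eta_P\,\partial_t\xi_E\,dx=0$ since $\eta_P\perp V_h$ while $\xi_E,\partial_t\xi_E\in V_h$.

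Next I would set up the error equations. Because the exact solution is smooth and continuous across cell interfaces, the numerical fluxes evaluated on it are single valued, so $H,E,P,\psi$ satisfy the weak forms \eqref{eq:semiDGH}--\eqref{eq:drpsiderdg} exactly (consistency). Subtracting the scheme yields error equations structurally identical to \eqref{eq:semiDGH}--\eqref{eq:drpsiderdg} with right-hand sides built from the projection errors. I would then mirror the stability argument of Theorem~\ref{thm:StaDG}: take $v=\xi_H$ in the $H$-error equation, $v=\xi_E$ in the $E$-error equation, sum over cells, and use the identity \eqref{eq:fluxsum} with the upwind flux \eqref{eq:upwind} so that the discrete parts produce the nonnegative interface dissipation $\sum_j\mathcal{M}_{j+1/2}(\xi_H,\xi_E)$ exactly as in \eqref{eq:sumfluxdg}. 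To close the polarization coupling I would insert $v=\partial_t\xi_P$ into the $P$-error equation and $v=\xi_\psi$, weighted by $\tfrac{\pi}{\sin(\pi\alpha)}\lambda^{1-\alpha}$, into the $\psi$-error equation, just as in the proof of Theorem~\ref{thm:StaDG}, so that the left-hand side assembles into $\tfrac{d}{dt}\mathcal{E}(\xi_H,\xi_E,\xi_P,\xi_\psi)$ plus the nonnegative diffusive dissipation.

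The crux of the argument---and the step I expect to be the main obstacle---is showing that the interface contributions of the projection errors $\eta_E,\eta_H$ drop out identically, since this is precisely what upgrades the rate from the generic $\mathcal{O}(h^{k+1/2})$ to the optimal $\mathcal{O}(h^{k+1})$. Passing to the characteristic variables $w^{\pm}=E\pm\sqrt{\mu_0/(\epsilon_0\epsilon_\infty)}\,H$ diagonalizes the upwind flux, and the combined one-sided projections \eqref{eq:proPE}--\eqref{eq:proPH} are built so that $\mathcal{P}^{+}$ reproduces $w^{+}$ at the left endpoint and $\mathcal{P}^{-}$ reproduces $w^{-}$ at the right endpoint of each cell. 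Using the boundary conditions in \eqref{eq:ProPp}--\eqref{eq:ProPm}, one computes $\eta_E=\tfrac12(\eta^{+}+\eta^{-})$ and $\eta_H=\tfrac{1}{2}\sqrt{\epsilon_0\epsilon_\infty/\mu_0}\,(\eta^{+}-\eta^{-})$ with $\eta^{\pm}=w^{\pm}-\mathcal{P}^{\pm}w^{\pm}$, whence $\eta^{+}(x_{j+1/2}^{+})=0$ and $\eta^{-}(x_{j+1/2}^{-})=0$; substituting into \eqref{eq:upwind} gives $\widehat{\eta_E}=\widehat{\eta_H}=0$ at every interface, so all $\eta$-flux terms vanish. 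The remaining $\eta$-volume residuals $\int_{I_j}\eta_E\,\partial_x v\,dx$ and $\int_{I_j}\eta_H\,\partial_x v\,dx$ vanish as well, since $\partial_x v\in\mathbb{P}^{k-1}(I_j)$ and both $\eta_E,\eta_H$ are orthogonal to $\mathbb{P}^{k-1}(I_j)$ by \eqref{eq:ProPp}--\eqref{eq:ProPm}.

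Collecting the estimates then produces a differential inequality of the form $\tfrac{d}{dt}\mathcal{E}(\xi_H,\xi_E,\xi_P,\xi_\psi)\le C\,\mathcal{E}(\xi_H,\xi_E,\xi_P,\xi_\psi)+C\,h^{2k+2}$, after bounding the few surviving projection-error terms via \eqref{eq:inequalityP} and Young's inequality (and controlling the $\partial_t\eta_H$ contribution using that $\partial_t$ commutes with the projections for constant coefficients). The hypotheses $\psi_h(0)=0$ and $P_h(0)=\mathcal{P}_P P_0$, $E_h(0)=\mathcal{P}_E E_0$, $H_h(0)=\mathcal{P}_H H_0$ force $\xi_U(0)=0$, so Gronwall's lemma gives $\mathcal{E}(\xi_H,\xi_E,\xi_P,\xi_\psi)(t)\le C\,h^{2k+2}$ uniformly in $t$, and in particular $\|\xi_E\|^2+\|\xi_H\|^2+\|\xi_P\|^2\le C\,h^{2k+2}$. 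Combining this with the projection bound $\|\eta_E\|^2+\|\eta_H\|^2+\|\eta_P\|^2\le C\,h^{2k+2}$ from \eqref{eq:inequalityP} via the triangle inequality yields \eqref{eq:ErrorBDF2DGAPP}.
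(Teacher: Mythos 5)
Your overall strategy coincides with the paper's: the same splitting through the tailored projections (with the roles of $\eta$ and $\xi$ swapped relative to the paper's notation), the same test functions mirroring Theorem \ref{thm:StaDG}, and a Gronwall conclusion. Your verification via characteristic variables that $\widehat{\eta_E}=\widehat{\eta_H}=0$ and that the volume terms $\int_{I_j}\eta_E\,\partial_x v\,dx$, $\int_{I_j}\eta_H\,\partial_x v\,dx$ vanish is correct, and it fills in details the paper only asserts. However, there is a genuine gap: you have dismissed the wrong cross term. The term $\int_\Omega\partial_t\eta_P\,\xi_E\,dx$ (arising from the $E$-error equation tested with $\xi_E$) does indeed vanish, since $\partial_t\eta_P$ is again an $L^2$-projection error. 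The problematic term arises instead when you test the $P$-error equation with $v=\partial_t\xi_P$: its right-hand side retains $\int_\Omega\eta_E\,\partial_t\xi_P\,dx$, and this does \emph{not} vanish, because $\mathcal{P}_E$ is not the $L^2$ projection --- it is assembled from the Gauss--Radau projections $\mathcal{P}^{\pm}$, which are orthogonal only to $\mathbb{P}^{k-1}(I_j)$, while $\partial_t\xi_P\in\mathbb{P}^k(I_j)$. This single surviving term defeats your claimed pointwise differential inequality $\frac{d}{dt}\mathcal{E}(\xi_H,\xi_E,\xi_P,\xi_\psi)\le C\,\mathcal{E}+C\,h^{2k+2}$: Young's inequality turns it into $C\varepsilon^{-1}\|\eta_E\|^2+\varepsilon\|\partial_t\xi_P\|^2$, and $\|\partial_t\xi_P\|^2$ is controlled neither by the energy nor by any evolution equation (the discrete $P$-equation \eqref{eq:semiDGP} is an algebraic constraint, with no $\partial_t P_h$ term).

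The repair --- and this is precisely what the paper does with its term $\mathcal{J}_3$ --- is to integrate the energy inequality over $[0,t]$ \emph{before} estimating, and to integrate the offending term by parts in time, which is legitimate because the projections commute with $\partial_t$ for constant coefficients:
\begin{equation*}
\int_0^t\!\!\int_\Omega\eta_E\,\partial_t\xi_P\,dx\,d\tau
=\int_\Omega\eta_E(t)\,\xi_P(t)\,dx-\int_0^t\!\!\int_\Omega\partial_t\eta_E\,\xi_P\,dx\,d\tau ,
\end{equation*}
where $\xi_P(0)=0$ by your choice of initial data. Both terms on the right are now harmless: the time-boundary term is split by Young's inequality so that the piece $\tfrac14\int_\Omega\xi_P^2/\bigl(\epsilon_0(\epsilon_s-\epsilon_\infty)\bigr)\,dx$ can be absorbed into the left-hand side (this is exactly why the paper introduces the reduced energy $\overline{\mathcal{E}}$ carrying the coefficient $1/4$ on the $P$-component), the remaining pieces are $\mathcal{O}(h^{2k+2})$ by \eqref{eq:inequalityP}, and the time-integrated term feeds into the \emph{integral} form of Gronwall's inequality. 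With this modification your argument closes and yields \eqref{eq:ErrorBDF2DGAPP}; without it, the estimate does not close.
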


\begin{proof}
With the numerical flux being consistent, the semi-discrete  numerical scheme (\ref{eq:semiDGH})-(\ref{eq:drpsiderdg})   also holds when we replace the numerical solutions  with the exact solutions. That is to say for all $j$, it holds that
\begin{eqnarray}
\label{eq:semiDGHconerr}
\int_{I_j}\mu_0\frac{\pr  H}{\pr t}\,v\,dx
+\int_{I_j} E\, \frac{\pr v}{\pr x} \,dx-\widehat{E}\,v^{-}|_{x_{j+1/2}}
+\widehat{E}\,v^{+}|_{x_{j-1/2}}=\int_{I_j}F_1\,v\,dx,\\
\label{eq:semiDGEconerr}
\int_{I_j}\epsilon_0\epsilon_{\infty}\frac{\pr E}{\pr t}\,v\,dx +\int_{I_j}\frac{ \pr  P}{\pr t}\,v\,dx+\int_{I_j} H\, \frac{\pr v}{\pr x}\,dx-\widehat{H}v^{-}|_{x_{j+1/2}}
+\widehat{H}\,v^{+}|_{x_{j-1/2}}=\int_{I_j}F_2\,v\,dx,\\
\label{eq:semiDGPconerr}
\int_{I_j}\int_{0}^{\infty}\frac{\tau_0^{\alpha}}{\epsilon_0(\epsilon_{s}-\epsilon_{\infty})} {\psi}\,v\,d\lambda\,dx+\int_{I_j}\frac{P}{\epsilon_0(\epsilon_{s}-\epsilon_{\infty})}\,v\,dx
=\int_{I_j}E\,v\,dx+\int_{I_j}\frac{F_3}{\epsilon_0(\epsilon_{s}-\epsilon_{\infty})}\,v\,dx,\\
\label{eq:drpsiderdgconerr}
\int_{I_j}\frac{\tau_0^{\alpha}}{\epsilon_0(\epsilon_{s}-\epsilon_{\infty})}\frac{\partial {\psi}}{\partial t}\,v\,dx+\lambda \int_{I_j}\frac{\tau_0^{\alpha}}{\epsilon_0(\epsilon_{s}-\epsilon_{\infty})} {\psi}\,v\,dx=\frac{\sin(\pi\alpha)}{\pi}\lambda^{\alpha-1}\int_{I_j}\frac{\tau_0^{\alpha}}{\epsilon_0(\epsilon_{s}-\epsilon_{\infty})}\frac{\partial P}{\partial t}\,v\,dx,
\end{eqnarray}
where $E\in C^1(0,T;\mathcal{H}^1(\Omega)),\,H\in C^1(0,T;\mathcal{H}^1(\Omega)),\,P\in H^1(0,T; L^2(\Omega) )$.

We decompose the error $U-U_h$, $U=H,\,E,\,P,\,\Psi$ as 
\begin{equation}\label{def:err}
\eta_U:=\mathcal{P}_UU-U_h\,\in V_h,\quad \xi_U:=\mathcal{P}_UU-U,\quad U-U_h=\eta_U-\xi_U.
\end{equation}
Note that the choice of projections \eqref{eq:proPE} and \eqref{eq:proPH} imply $\widehat{\xi}_E=\widehat{\xi}_H=0$. Furthermore, the linearity of the projections $\mathcal{P}$ and $\mathcal{P}^{\pm}$ leads to $\partial_t\xi_U=\xi_{\partial_t U}$, $U=H,\,E,\,P,\, \Psi$. 

Subtracting (\ref{eq:semiDGH})-(\ref{eq:drpsiderdg}) from (\ref{eq:semiDGHconerr})-(\ref{eq:drpsiderdgconerr}) gives for all $v\in V_h$
\begin{subequations}
\begin{align}
\label{eq:BDF2DGHerr}
&\,\nonumber\int_{I_j}\mu_0\partial_t\eta_H\,v\,dx
+ \int_{I_j}  \eta_E\, \pr_xv \,dx-\widehat{\eta_E}\,v^-|_{x_{j+1/2}}
+\widehat{\eta_E}\,v^+|_{x_{j-1/2}}\\
=&\,\int_{I_j}\mu_0\pr_t \xi_H\,v\,dx
+ \int_{I_j}\xi_E\,\pr_x v\,dx
-\widehat{\xi_E}\,
v^-|_{x_{j+1/2}}+\widehat{\xi_E}\,
v^+|_{x_{j-1/2}},\\
\label{eq:BDF2DGEerr}
&\nonumber\int_{I_j}\epsilon_0\epsilon_{\infty}\partial_t\eta_E\,v\,dx+\int_{I_j}\partial_t \eta_P\,v\,dx
+ \int_{I_j}  \eta_H\, \pr_xv \,dx-\widehat{\eta_H}\,v^{-}|_{x_{j+1/2}}
+\widehat{\eta_H}\,v^{+}|_{x_{j-1/2}}\\
=&\,\int_{I_j}\epsilon_0\epsilon_{\infty}\pr_t\xi_E\,v\,dx
+\int_{I_j}\pr_t \xi_P\,v\,dx
+\int_{I_j}\xi_H\,\pr_x v\,dx
- \widehat{\xi_H}
\,v^{-}|_{x_{j+1/2}}+\widehat{\xi_H}
\,v^{+}|_{x_{j-1/2}},\\
\label{eq:BDF2DGPerr}
&\nonumber \int_{I_j}\int_0^{\infty}\frac{\tau_0^{\alpha}}{\epsilon_0(\epsilon_{s}-\epsilon_{\infty})}\eta_{\psi}\,v\,d\lambda\,dx
+ \int_{I_j}\frac{\eta_P}{\epsilon_0(\epsilon_{s}-\epsilon_{\infty})}\, v\,dx-\int_{I_j}
\eta_E\,v\,dx\\
=&\,
\int_{I_j}\int_0^{\infty}\frac{\tau_0^{\alpha}}{\epsilon_0(\epsilon_{s}-\epsilon_{\infty})}\xi_{\psi}\,v\,d\lambda\,dx
+ \int_{I_j}\frac{\xi_P}{\epsilon_0(\epsilon_{s}-\epsilon_{\infty})}\, v\,dx
-\int_{I_j}
\xi_E\,v\,dx,\\
\label{eq:psierrpr}
&\nonumber \int_{I_j}\frac{\tau_0^{\alpha}}{\epsilon_0(\epsilon_{s}-\epsilon_{\infty})}\partial_t \eta_{\psi}\,v\,dx+\lambda\int_{I_j}\frac{\tau_0^{\alpha}}{\epsilon_0(\epsilon_{s}-\epsilon_{\infty})}\eta_{\psi}\,v\,dx
-\frac{\sin(\pi\alpha)}{\pi}\lambda^{\alpha-1}\int_{I_j}\frac{\tau_0^{\alpha}}{\epsilon_0(\epsilon_{s}-\epsilon_{\infty})}\partial_t\eta_P\,v\,dx\\
=&\,\int_{I_j}\frac{\tau_0^{\alpha}}{\epsilon_0(\epsilon_{s}-\epsilon_{\infty})}\partial_t \xi_{\psi}\,v\,dx+\lambda\int_{I_j}\frac{\tau_0^{\alpha}}{\epsilon_0(\epsilon_{s}-\epsilon_{\infty})}\xi_{\psi}\,v\,dx
-\frac{\sin(\pi\alpha)}{\pi}\lambda^{\alpha-1}\int_{I_j}\frac{\tau_0^{\alpha}}{\epsilon_0(\epsilon_{s}-\epsilon_{\infty})}\partial_t\xi_P\,v\,dx.
\end{align}
\end{subequations}
Following almost the same line as the proof of Theorem \ref{thm:StaDG}, we can derive
\begin{equation}\label{eq:semiDGerrte}
\begin{aligned}
\frac{d \mathcal{E}(\eta_H,\eta_E,\eta_P,\,\eta_\Psi)}{d t}=&
-\frac{\pi }{\sin(\pi\alpha)}
\int_{\Omega}\int_{0}^{\infty}\frac{\tau_0^{\alpha}\lambda^{2-\alpha}}{\epsilon_0(\epsilon_s-\epsilon_{\infty})}
\,\eta_\psi^2\,d\lambda\,dx\\
&-\sum_{j=1}^M\mathcal{M}_{j+1/2}(\eta_H,\,\eta_E)+\sum_{i=1}^4\mathcal{J}_i
\le\sum_{i=1}^4\mathcal{J}_i,
\end{aligned}
\end{equation}
where  $\mathcal{M}_{j+1/2}(\eta_H,\,\eta_E)\ge0$ is given in (\ref{eq:defMmP}), $\mathcal{E}$ given in  (\ref{eq:defeaxctenergy}) and
\begin{eqnarray*}
\mathcal{J}_{1}&=&\int_{\Omega}\left(\mu_0\partial_t \xi_H\,\eta_H+\epsilon_0\epsilon_{\infty}\partial_t \xi_E\,\eta_E+\partial_t\xi_P\eta_E\,\right)\,dx,\\
\mathcal{J}_{2}&=& \sum_{j=1}^M\int_{I_j}\left(\xi_E\partial_x\eta_H+\xi_H\partial_x\eta_E\right)\,dx
+\sum_{j=1}^M\left(\widehat{\xi_E}[\eta_H]+\widehat{\xi_H}[\eta_E]\right)_{j+1/2},\\
\mathcal{J}_{3}&=&\sum_{j=1}^M
\int_{I_j}\frac{1}{\epsilon_0(\epsilon_s-\epsilon_{\infty})}\left[\xi_P\partial_t\eta_P
+\tau_0^{\alpha}\int_0^{\infty}\xi_{\psi}\partial_t\eta_P\,d\lambda\right]\,dx-\int_{I_j}\xi_E\partial_t\eta_P\,dx,\\
\mathcal{J}_{4}&=&\sum_{j=1}^M
\int_{0}^{\infty}\int_{I_j}\frac{\tau_0^{\alpha}}{\epsilon_0(\epsilon_s-\epsilon_{\infty})}\left[\frac{\pi}{\sin(\pi\alpha)}\left(\lambda^{2-\alpha}
\xi_{\psi}\eta_{\psi}+ \lambda^{1-\alpha}\partial_t\xi_{\psi}\eta_{\psi}\right)
-\partial_t\xi_P\eta_{\psi}\right]\,dx\,d\lambda.
\end{eqnarray*}

In the following, we estimate $\mathcal{J}_i$ with $i=1,\cdots,4$.
According to the projection properties (\ref{eq:ProPp})-(\ref{eq:ProPm}) of $\mathcal{P}^{\pm}$, the Cauchy-Schwartz inequality, we have
$$
\mathcal{J}_1\le  Ch^{2k+2}+\frac{1}{2}\int_{\Omega}\mu_0\eta^2_H
+\epsilon_0\epsilon_{\infty}\eta^2_E\,dx.
$$
With the choice of $\mathcal{P}_U,\,U=E,H,P,\psi$ and  the projection properties (\ref{eq:ProL2})-(\ref{eq:ProPm})
and the assumption of constant coefficients, we have
\begin{eqnarray*}
\begin{aligned}
\mathcal{J}_2&=\mathcal{J}_4=0.
\end{aligned}
\end{eqnarray*}
Integrating $\mathcal{J}_3$ in $[0,\,t]$ and taking into account the projection property (\ref{eq:ProL2}) and $\eta_P(x,0)=0$, followed by an application of Young's inequality lead to
\begin{eqnarray*}\label{eq:J3intA}
\begin{aligned}
\int_0^t\mathcal{J}_3\,d\tau &= -\int_0^t\int_{\Omega}\xi_E\partial_t\eta_P\,dx d\tau
= \int_0^t\int_{\Omega}\partial_t\xi_E\eta_P\,dx\,d\tau-\int_{\Omega}\xi_E\eta_Pdx\\
&\le C(\Omega,k,t)h^{2k+2}+\frac{1}{4}\int_{\Omega}\frac{\eta^2_P}{\epsilon_0(\epsilon_s-\epsilon_{\infty})}\,dx
+\frac{1}{4}\int_0^t\int_{\Omega}\frac{\eta_P^2}{\epsilon_0(\epsilon_s-\epsilon_{\infty})}\,dx\,d\tau.
\end{aligned}
\end{eqnarray*}
By combining the above estimates with (\ref{eq:semiDGerrte}), it can be concluded that
\begin{eqnarray*}
\overline{\mathcal{E}}(\eta_H(t),\eta_E(t),\eta_P(t))&\le&\mathcal{E}(\eta_H(0),\eta_E(0),\eta_P(0),\, \eta_\Psi(0))\\
&+&C(\Omega,k,t)h^{2k+2}+\int_0^t \overline{\mathcal{E}}(\eta_H(\tau),\eta_E(\tau),\eta_P(\tau))\,d\tau,
\end{eqnarray*}
where we have set 
$$
\overline{\mathcal{E}}(\eta_H(t),\eta_E(t),\eta_P(t)):=
\frac{1}{2}\int_{\Omega}\mu_0\eta^2_H
+\epsilon_0\epsilon_{\infty}\eta^2_E\,dx+\frac{1}{4}\int_{\Omega}\frac{\eta^2_P}{\epsilon_0(\epsilon_s-\epsilon_{\infty})}\,dx\ge 0.
$$
By our assumptions and the projection estimate (\ref{eq:inequalityP}), it is easy to check
$$
\mathcal{E}(\eta_H(0),\eta_E(0),\eta_P(0),\,\eta_\Psi(0))\le Ch^{2k+2},
$$
which further indicates 
\begin{equation*}
\overline{\mathcal{E}}(\eta_H(t),\eta_E(t),\eta_P(t))\le C(\Omega,k,t)h^{2k+2}+\int_0^t \overline{\mathcal{E}}(\eta_H(t),\eta_E(t),\eta_P(t))\,d\tau.
\end{equation*}
Applying the Gronwall inequality, we obtain
\begin{equation}\label{eq:resetaerr}
\overline{\mathcal{E}}(\eta_H(t),\eta_E(t),\eta_P(t))\le C(\Omega,k,t)h^{2k+2}.
\end{equation}
Recalling that $U(t)-U_h(t)=\eta_U-\xi_U$ with $U=H,\,E,\,P$, we complete the proof by combining the triangle inequality, the projection estimate (\ref{eq:inequalityP}) and estimate (\ref{eq:resetaerr}).
\end{proof}

\section{The diffusive approximation equations}
\label{appmodel}
In this section, we derive the approximate equations of the Cole-Cole model (\ref{eq:MaxColeColeH})-(\ref{eq:MaxColeColeP}).
Based on the quadrature formula for \eqref{eq:DRDreppsi} with $\zeta_\ell,\,\lambda_\ell$ being the weights and abscissae, respectively, the  diffusive representation of the fractional derivative (\ref{eq:DRDreppsi}) is approximated by
\begin{eqnarray}
\label{eq:appderfrsum}
\frac{\partial^{\alpha} P(x,t)}{\partial t^{\alpha}}&=&\int_{0}^{\infty}\psi(x,t;\lambda)\,d\lambda
\approx \sum_{\ell=1}^L\zeta_{\ell}\psi(x,t;\lambda_\ell)
=:\sum_{\ell=1}^L\zeta_{\ell}\psi_\ell, \\
\label{eq:sumpointode}
\frac{\partial \psi_\ell}{\partial t}&=&-\lambda_\ell\psi_\ell+\frac{\sin(\pi\alpha)}{\pi}\lambda_\ell^{\alpha-1}\frac{\partial P}{\partial t}, \\
\psi_\ell(x,0;\lambda_\ell)&=&0,
\end{eqnarray}
where the auxiliary  variables  $\psi_\ell$ is defined in (\ref{def:psidr}). By this approach, the fractional derivative involved in the polarization equation  is replaced by a finite number of auxiliary  variables, which satisfy the evolution equation (\ref{eq:sumpointode}).
Injecting (\ref{eq:appderfrsum}) into (\ref{eq:MaxColeColeP}), we have the following approximate polarization equation
\begin{equation}\label{eq:apppolarder}
\tau_0^{\alpha}\sum_{\ell=1}^L\zeta_\ell\psi_\ell+P=\epsilon_0(\epsilon_s-\epsilon_{\infty})E+F_3.
\end{equation}
Combining (\ref{eq:MaxColeColeH})-(\ref{eq:MaxColeColeE}), (\ref{eq:appderfrsum})-(\ref{eq:sumpointode}) and (\ref{eq:apppolarder}), we obtain the following approximate system
\begin{eqnarray}
\label{eq:MaxColeColeHapp}
\mu_0 \frac{\pr H}{\pr t}&=& \frac{\partial E}{\partial x}+ {F}_1, \, \quad \quad \quad \quad \quad \quad\quad\, \text{in}\quad \Omega\times (0,T],\\
\label{eq:MaxColeColeEapp}
\epsilon_0\epsilon_{\infty}\frac{\pr E}{\pr t} &=&\frac{ \partial H}{\partial x} -\frac{\pr P}{\pr t}+{F}_2,\,\ \, \quad \quad \quad \quad \text{in}\quad \Omega\times (0,T],\\
\label{eq:MaxColeColePapp}
\tau_0^{\alpha}\sum_{\ell=1}^L\zeta_\ell\psi_\ell+P&=&\epsilon_0(\epsilon_s-\epsilon_{\infty})E+F_3,\ \ \quad\quad\quad \text{in}\quad \Omega\times (0,T],\\
\label{eq:MaxColeColepsidapp}
\frac{\partial \psi_\ell}{\partial t}+\lambda_\ell\psi_\ell&=&\frac{\sin(\pi\alpha)}{\pi}\lambda_\ell^{\alpha-1}\frac{\partial P}{\partial t},\ \ \quad \quad\quad \text{in}\quad \Omega\times (0,T].\label{Psi_ODE}
\end{eqnarray}
The DG scheme  for problem  (\ref{eq:MaxColeColeHapp})-(\ref{eq:MaxColeColepsidapp}) is to find $U_h(\cdot,t)\in V_h$ with $U=H,\,E,\,P,\,\psi_\ell$ such that for all $v\in V_h$ and all $j$, the following holds
\begin{eqnarray*}
\label{eq:semiDGHappd}
\int_{I_j}\mu_0\frac{\pr  H_h}{\pr t}\,v\,dx
+\int_{I_j} E_h\, \frac{\pr v}{\pr x} \,dx-\widehat{E_h}\,v^{-}|_{x_{j+1/2}}
+\widehat{E_h}\,v^{+}|_{x_{j-1/2}}=\int_{I_j}F_{1}\,v\,dx,\\
\label{eq:semiDGEappd}
\int_{I_j}\epsilon_0\epsilon_{\infty}\frac{\pr E_h}{\pr t}\,v\,dx +\int_{I_j}\frac{ \pr  P_h}{\pr t}\,v\,dx+\int_{I_j} H_h\, \frac{\pr v}{\pr x}\,dx-\widehat{H_h}v^{-}|_{x_{j+1/2}}
+\widehat{H_h}\,v^{+}|_{x_{j-1/2}}=\int_{I_j}F_{2}\,v\,dx,\\
\label{eq:semiDGPappd}
\int_{I_j}\frac{\tau_0^{\alpha}}{\epsilon_0(\epsilon_{s}-\epsilon_{\infty})}\sum_{\ell=1}^L\zeta_\ell\psi_{\ell,h}\,v\, dx+\int_{I_j}\frac{P_h}{\epsilon_0(\epsilon_{s}-\epsilon_{\infty})}\,v\,dx
-\int_{I_j}E_h\,v\,dx=\int_{I_j}\frac{F_{3}}{\epsilon_0(\epsilon_{s}-\epsilon_{\infty})}\,v\,dx,\\
\label{eq:drpsiderdgappd}
\int_{I_j}\frac{\tau_0^{\alpha}}{\epsilon_0(\epsilon_{s}-\epsilon_{\infty})}\frac{\partial {\psi}_{\ell,h}}{\partial t}\,v\,dx+\lambda \int_{I_j}\frac{\tau_0^{\alpha}}{\epsilon_0(\epsilon_{s}-\epsilon_{\infty})} {\psi}_{\ell,h}\,v\,dx-\frac{\sin(\pi\alpha)}{\pi}\lambda_\ell^{\alpha-1}\int_{I_j}\frac{\tau_0^{\alpha}}{\epsilon_0(\epsilon_{s}-\epsilon_{\infty})}\frac{\partial P_h}{\partial t}\,v\,dx=0,
\end{eqnarray*}
where the numerical flux $\hat{\cdot}$ is given in (\ref{eq:upwind}).
It is this system  (\ref{eq:MaxColeColeHapp})-\eqref{Psi_ODE} that forms the basis for our numerical work,  in which
the fractional derivative  is approximated by  a series of local ordinary differential equations.   Recalling that
the diffusive approximation is defined on $(0,\,\infty)$, one requirement is that  the abscissae obtained numerically should satisfy  $\lambda_\ell>0$ automatically.

The stability of the exact Cole-Cole model \eqref{eq:MaxColeColeH}-\eqref{eq:MaxColeColeP}is analysed in Theorem \ref{stableMaxwellCole}. We now illustrate  the energy property of the approximate   system (\ref{eq:MaxColeColeHapp})-(\ref{eq:MaxColeColepsidapp}). Similar to  the definition of the total energy $\mathcal{E}$, we define $\mathcal{E}^{\sharp}_1=\mathcal{E}_1$  in (\ref{def:energy1}) and introduce
\begin{eqnarray}\label{eq:energyE2primapp}
\mathcal{E}^{\sharp}_2&=&
\frac{\pi}{2\sin(\pi\alpha)}
\int_{\Omega}\tau_0^{\alpha}\sum_{\ell=1}^L\zeta_\ell \lambda_\ell^{1-\alpha} \left|\psi_\ell\right|^2\,dx,\\
\label{eq:energyappTotalEp}
\mathcal{E}^{\sharp}&=&\mathcal{E}_1^{
\sharp}+\mathcal{E}^{\sharp}_2.
\end{eqnarray}
Suppose $F_1=F_2=F_3=0$ and consider the periodic  boundary condition  and initial conditions (\ref{eq:iniMax}). Following almost the same line as the proof of  Theorem \ref{stableMaxwellCole}, the following relation can be proved.
\begin{equation}\label{eq:disappenge2}
\frac{d}{d t}\mathcal{E}^{\sharp}=
-\frac{\pi }{\sin(\pi\alpha)}\int_{\Omega}\tau_0^{\alpha}
\sum_{\ell=1}^L\zeta_{\ell}\lambda_\ell^{2-\alpha}
\psi_\ell\,\psi_\ell\,dx.
\end{equation}

\begin{theorem} (stability analysis of (\ref{eq:MaxColeColeHapp})-(\ref{eq:MaxColeColepsidapp}))\label{stableMaxwellColeapp}
Under the assumptions of $F_1=F_2=F_3=0$ and  $\zeta_\ell>0,\,\lambda_\ell>0$, with 
the periodic boundary condition  and the initial conditions (\ref{eq:iniMax}), the approximate system  (\ref{eq:MaxColeColeHapp})-(\ref{eq:MaxColeColepsidapp})  is stable, in the sense that the total  energy $\mathcal{E}^{\sharp}$ is positive-definite and decreasing.
\end{theorem}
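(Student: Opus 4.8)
The plan is to reduce the two assertions---that $\mathcal{E}^{\sharp}$ is positive-definite and that it is non-increasing---to elementary sign checks, using the energy identity \eqref{eq:disappenge2}, which I take as already established (it is obtained exactly as in Theorem \ref{stableMaxwellCole}). The whole argument turns on a single observation: under the standing hypotheses $\zeta_\ell>0$ and $\lambda_\ell>0$, the finite sum over $\ell$ inherits the positivity that the measure $dM_\alpha$ carries in the continuous model, so the discrete energy behaves just like its continuous counterpart.

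First I would verify positive-definiteness term by term. Since $\mathcal{E}_1^{\sharp}=\mathcal{E}_1$ and the integrand of \eqref{def:energy1} is a sum of squares with strictly positive coefficients---$\mu_0>0$, $\epsilon_0\epsilon_\infty>0$, and $\epsilon_0(\epsilon_s-\epsilon_\infty)>0$ because $\epsilon_s>\epsilon_\infty$---we get $\mathcal{E}_1^{\sharp}\ge 0$, with equality only when $H=E=P=0$. For $\mathcal{E}_2^{\sharp}$ in \eqref{eq:energyE2primapp}, the range $0<\alpha<1$ gives $0<\pi\alpha<\pi$, hence $\sin(\pi\alpha)>0$ and the prefactor $\pi/\sin(\pi\alpha)>0$; combined with $\tau_0^\alpha>0$ and, under the hypotheses, $\zeta_\ell\lambda_\ell^{1-\alpha}>0$ (note $1-\alpha>0$), every summand $\zeta_\ell\lambda_\ell^{1-\alpha}|\psi_\ell|^2$ is nonnegative. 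Therefore $\mathcal{E}_2^{\sharp}\ge 0$ and $\mathcal{E}^{\sharp}=\mathcal{E}_1^{\sharp}+\mathcal{E}_2^{\sharp}$ is positive-definite.

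Next I would read off the decay directly from \eqref{eq:disappenge2}. Every factor in its right-hand integrand is nonnegative under the assumptions: $\pi/\sin(\pi\alpha)>0$, $\tau_0^\alpha>0$, $\zeta_\ell>0$, $\lambda_\ell^{2-\alpha}>0$ (since $2-\alpha>0$), and $\psi_\ell^2\ge 0$. Hence the spatial integral is nonnegative and $\tfrac{d}{dt}\mathcal{E}^{\sharp}\le 0$, which is precisely the claimed monotone decrease; integrating in time then yields $\mathcal{E}^{\sharp}(t)\le\mathcal{E}^{\sharp}(0)$, i.e.\ stability.

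If one prefers to reprove \eqref{eq:disappenge2} in place rather than cite it, I would follow the proof of Theorem \ref{stableMaxwellCole} step for step: test \eqref{eq:MaxColeColeHapp} with $H$ and \eqref{eq:MaxColeColeEapp} with $E$, add and integrate by parts, using periodicity to cancel the boundary terms; use the algebraic relation \eqref{eq:MaxColeColePapp} to eliminate $E$ inside $\int_\Omega E\,\partial_t P\,dx$, which produces $\tfrac{d}{dt}\mathcal{E}_1^{\sharp}$ together with a cross term in $\sum_\ell\zeta_\ell\psi_\ell\,\partial_tP$; and finally test each ODE \eqref{Psi_ODE} with $\psi_\ell$ against the positive weight proportional to $\zeta_\ell\lambda_\ell^{1-\alpha}$, summing over $\ell$ so that this discrete sum replaces the $\lambda$-integral of Theorem \ref{stableMaxwellCole} and the cross term cancels, leaving \eqref{eq:disappenge2}. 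Conceptually there is no hard computation here; the one genuine point---what I regard as the true obstacle---is that both conclusions rest entirely on $\zeta_\ell>0$ and $\lambda_\ell>0$. These are exactly the positivity constraints that the constrained optimization of Section \ref{optimization} is designed to enforce, and without them neither $\mathcal{E}_2^{\sharp}\ge 0$ nor $\tfrac{d}{dt}\mathcal{E}^{\sharp}\le 0$ could be guaranteed.
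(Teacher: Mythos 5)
Your proposal is correct and follows essentially the same route as the paper: the paper's own proof is a one-line appeal to the definitions \eqref{eq:energyE2primapp}--\eqref{eq:energyappTotalEp} and the identity \eqref{eq:disappenge2} (itself obtained by repeating the argument of Theorem \ref{stableMaxwellCole}), combined with the positivity of $\zeta_\ell$ and $\lambda_\ell$. You have simply made the sign checks and the derivation of \eqref{eq:disappenge2} explicit, which is exactly what the paper leaves implicit.
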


\begin{proof}
The result follows from (\ref{eq:energyE2primapp})-(\ref{eq:disappenge2}) with $\zeta_\ell>0$ and $\lambda_\ell>0$.
\end{proof}

\begin{rem}
From the above theorem, the conditions $\lambda_\ell>0$ alone do not guarantee that   the approximate  system (\ref{eq:MaxColeColeHapp})-(\ref{eq:MaxColeColepsidapp}) is stable.  However, if we have $\zeta_\ell>0$ and $\lambda_\ell>0$, then the approximate   system is stable with
$\mathcal{E}^{\sharp}$ being  positive-definite and satisfying $d\mathcal{E}^{\sharp}/dt \le 0$.
Therefore, numerical methods with high accuracy and positive preserving of weights $\zeta_\ell$ and abscissae $\lambda_\ell$ is of utmost importance for the diffusive approximation.
\end{rem}

\section{Optimization method for quadrature coefficients}
\label{optimization}

In order to simulate the approximate  system  (\ref{eq:MaxColeColeHapp})-(\ref{eq:MaxColeColepsidapp}), the  weights $\zeta_\ell$ and abscissae $\lambda_\ell$  are need to be  determined.
In this section, we introduce the nonlinear constrained optimization method for  diffusive approximation so that the requirements of  $\zeta_\ell>0$  and $\lambda_\ell>0$ are satisfied.
For the sake of clarity, the space coordinate $x$ is omitted.

Since $P_0=0$, by using Laplace transform, we get
$$
\mathcal{L}\left[\frac{\partial^{\alpha} {P}}{\partial t^{\alpha}}\right]=\left(i\omega\right)^{\alpha} \mathcal{L}\left[ P\right].
$$
\iffalse%======================> deleted by MYO
Recalling that
\begin{eqnarray*}
\label{eq:appderfrsumopt}
\frac{\partial^{\alpha} {u}}{\partial t^{\alpha}}=\int_{0}^{\infty}{\psi}(t;\lambda)\,d\lambda
\approx \sum_{\ell=1}^L\zeta_{\ell}{\psi}(t;\lambda_\ell),\quad
{\psi}(t;\lambda)=\frac{\sin(\pi\,\alpha)}{\pi}\lambda^{\alpha-1}\int_{0}^te^{-(t-\tau)\lambda}\,\frac{\partial {u}}{\partial \tau}\,d\tau,
\end{eqnarray*}
\fi%<================================= MYO
By \eqref{def:h_alpha}, \eqref{eq:DRDreppsi} and \eqref{eq:appderfrsum} we have 
$$
\mathcal{L}\left[\frac{\partial^{\alpha} {P}}{\partial t^{\alpha}}\right]\approx
 \frac{\sin(\pi\alpha)}{\pi}(i\omega)
\sum_{\ell=1}^L\frac{\zeta_\ell\lambda_\ell^{\alpha-1}}{i\omega+\lambda_\ell}\, \mathcal{L}\left[ P\right].
$$
Suppose the  frequency range $[\omega_{min},\, \omega_{max}]$ is divided into log-spaced sample points with
$$
\omega_m=\omega_{min}\left(\frac{\omega_{max}}{\omega_{min}}\right)^{\frac{m-1}{M-1}},\quad m=1,\,\cdots,M>1.
$$
Denote
\begin{equation}\label{eq:errfunopt}
\chi(\omega):=\frac{\mathcal{B}(\omega)}{(i\omega)^{\alpha}},\quad
\mathcal{B}(\omega)=\frac{\sin(\pi\alpha)}{\pi}(i\omega)
\sum_{\ell=1}^L\frac{\zeta_\ell\lambda_\ell^{\alpha-1}}{i\omega+\lambda_\ell},
\end{equation}
then $\chi(\omega)$ should be 1. As a result, we  consider the following objective function
\begin{equation}\label{eq:defobjdel}
\delta^2=\sum_{m=1}^M\left|\chi(\omega_m)-1\right|^2
=\sum_{m=1}^M\left|\frac{\sin(\pi\alpha)}{\pi}(i\omega_m)^{1-\alpha}
\sum_{\ell=1}^L\frac{\zeta_\ell\lambda_\ell^{\alpha-1}}{i\omega_m+\lambda_\ell}-1\right|^2.
\end{equation}
From Theorem \ref{stableMaxwellColeapp}, we know that the weights $\zeta_\ell>0$ and abscissae $\lambda_\ell>0$ play a key role for the stability of  the approximate system (\ref{eq:MaxColeColeHapp})-(\ref{eq:MaxColeColepsidapp}).  With this in mind, we can establish the  following constrained optimization problem
\begin{numcases}{}
\label{eq:obj1nonoptp}
\min_{\zeta_\ell,\,\lambda_\ell} \delta^2,\\
\label{eq:nolinoptimlam}
0<\lambda_\ell<10\,\omega_{max },\\
\label{eq:obj1nonoptweightp0}
\zeta_\ell>0,\quad \ell=1,\,\cdots,L,
\end{numcases}
which leads to a square system when $M=L$ and an overdetermined system when $M>L$. We would like to point out that there is no theoretical result for the choice of the interval $[\omega_{min},\,\omega_{max}]$ and the log-spaced sample points.
In the implementation, we use the program SolvOpt \cite{KappelSolvOpt,ShorSolvOpt} to solve the above nonlinear constrained optimization problem. The  modified Gauss-Jacobi quadrature formula is used to initialize the  iterative process \cite{BirkgauJa}, with which the positivity of the  starting points is satisfied. More precisely, by introducing
$
\lambda=\left(\frac{1-\tilde{\lambda}}{1+\tilde{\lambda}}\right)^2,
$
we  derive
\begin{equation}\label{eq:mdfgauuJa}
\begin{aligned}
\int_{0}^{\infty}{\psi}(t;\lambda)\,d\lambda
&=\int_{-1}^1\frac{4(1-\tilde{\lambda})}{(1+\tilde{\lambda})^3}\,
\psi\left(t;\left(\frac{1-\tilde{\lambda}}{1+\tilde{\lambda}}\right)^2\right)\,d\tilde{\lambda}\\
&=\int_{-1}^1(1+\tilde{\lambda})^{\nu_1}\,(1-\tilde{\lambda})^{\nu_2}\,
\tilde{\psi}(t;\tilde{\lambda})\,d\tilde{\lambda}\simeq \sum_{\ell=1}^L \tilde{\zeta}_\ell\,\tilde{\psi}(t;\tilde{\lambda}_\ell).
\end{aligned}
\end{equation}
As shown in \cite{BirkgauJa}, an optimal choice is $\nu_1=1-2\,\bar{\alpha},\,\nu_2=1+2\,\bar{\alpha}$ with $\bar{\alpha}=2\,\alpha-1$, also see \cite{Lombardalpha}. After we obtain the Gauss quadrature, we have
$$
\lambda_\ell=\left(\frac{1-\tilde{\lambda}_\ell}{1+\tilde{\lambda}_\ell}\right)^2,\quad
\zeta_\ell
=\frac{4\tilde{\zeta}_\ell}{(1+\tilde{\lambda}_\ell)^{\nu_1+3}\,(1-\tilde{\lambda}_\ell)^{\nu_2-1}}.
$$

To  assess the overall performances of the proposed nonlinear constrained optimization method, we firstly report the relative errors between $(i\,\omega)^{\alpha}$ and $\mathcal{B}(\omega)$ with fixed $\alpha=0.5$ and frequency bands $[0.5,\, 5]$ and $[20\pi,\,2000\pi]$. Fig.\,\ref{fig:relativeerror} displays  the influence of $M$ and $L$ on the accuracy of the optimization procedure,  which shows that  the proposed approach approximate the infinite integral  very well with both square system ($M=L$) and overdetermined system ($M>L$). The error curves are plotted in $[\omega_{min}/2,\,2\,\omega_{max}]$  and the error curves of  $\alpha\in(0,\,1)$ is very similar to $\alpha=0.5$.
Secondly, we examine the sign of quadrature coefficients.
In Fig.\,\ref{fig:positivesign}, we present the  values of the weights $\zeta_\ell$ and abscissae $\lambda_\ell$  for different $\alpha$ and  $L$. We can observe that  some   quadrature coefficients are very  close to zero but they are always larger than zero, which implies that the proposed nonlinear optimization method  preserves the positivity of $\zeta_\ell$ and $\lambda_\ell$ so that the approximate system (\ref{eq:MaxColeColeHapp})-(\ref{eq:MaxColeColepsidapp}) is stable with  $\mathcal{E}_2^{\sharp}\ge 0$ and $-d\mathcal{E}^{\sharp}/dt\ge 0$.

Since  the relative errors  obtained by the square system  and  the overdetermined systems are comparable, in the forthcoming  numerical experiments, we will always set $M=2L$.
\begin{figure}[htb]
\centering
\subfloat[$M=L$]{%
   \includegraphics[width=2in,height=1.5in]{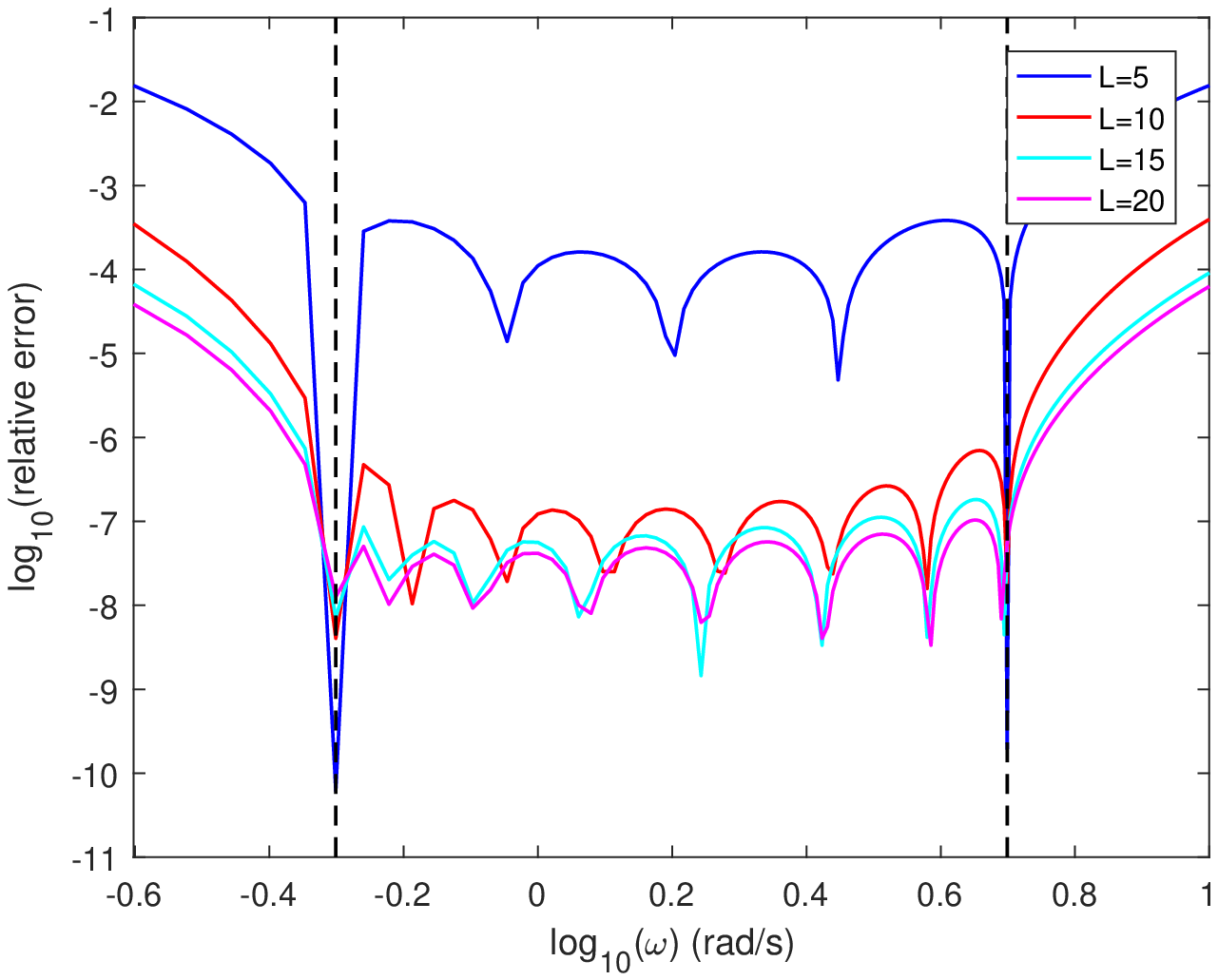}}
\subfloat[$M=2L$]{%
   \includegraphics[width=2in,height=1.5in]{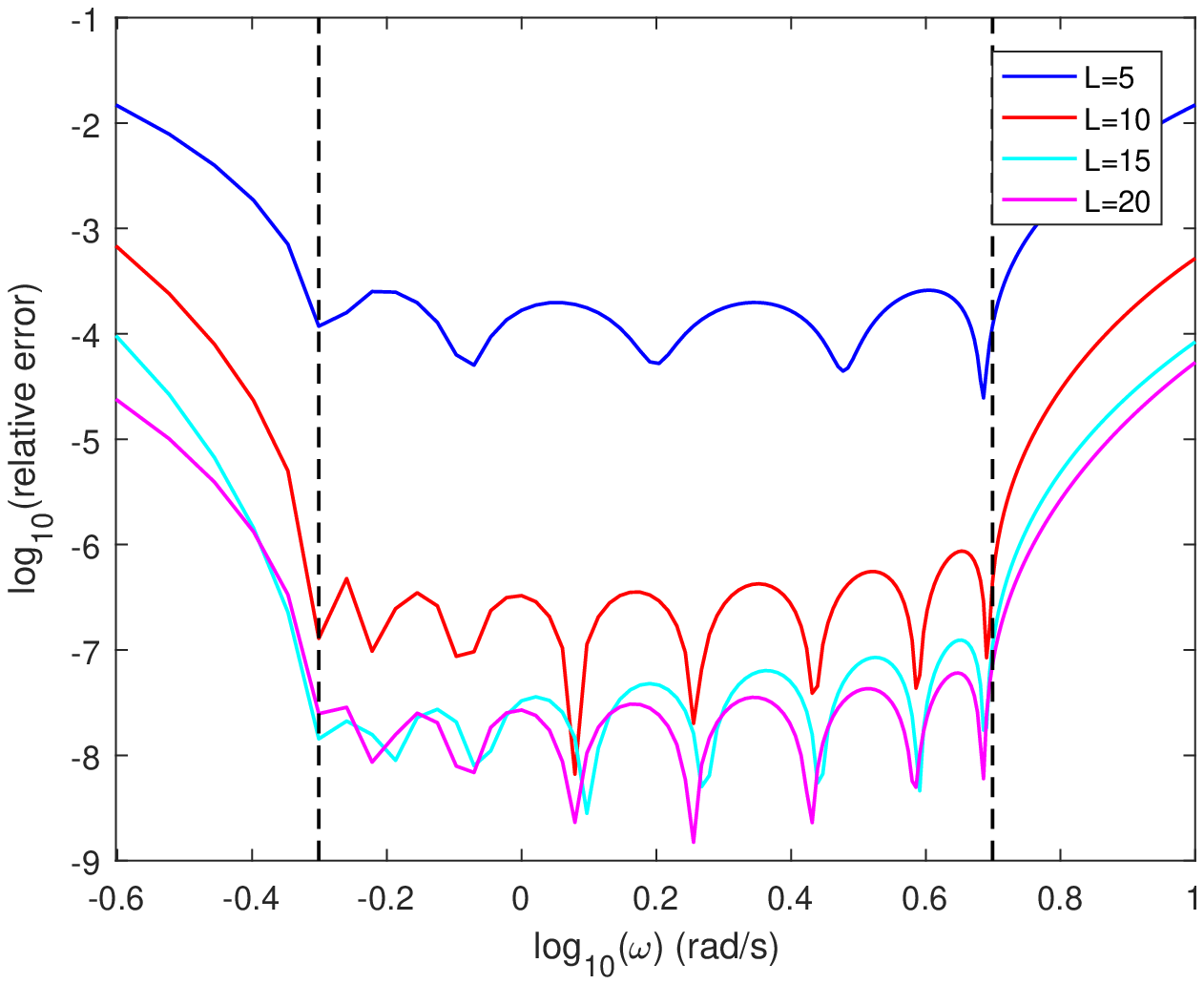}}
\subfloat[$M=3L$]{%
   \includegraphics[width=2in,height=1.5in]{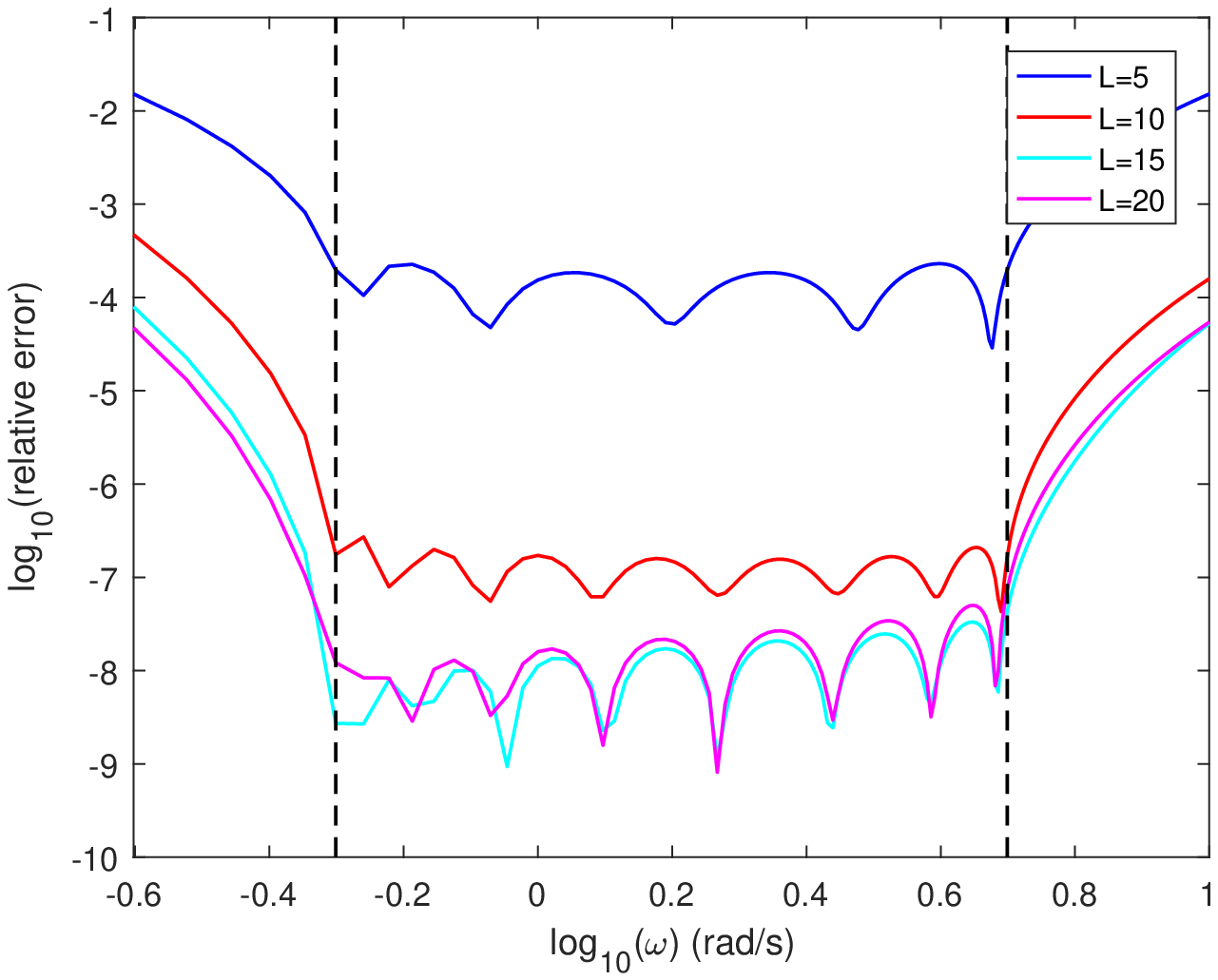}}\\
\subfloat[$M=L$]{%
   \includegraphics[width=2in,height=1.5in]{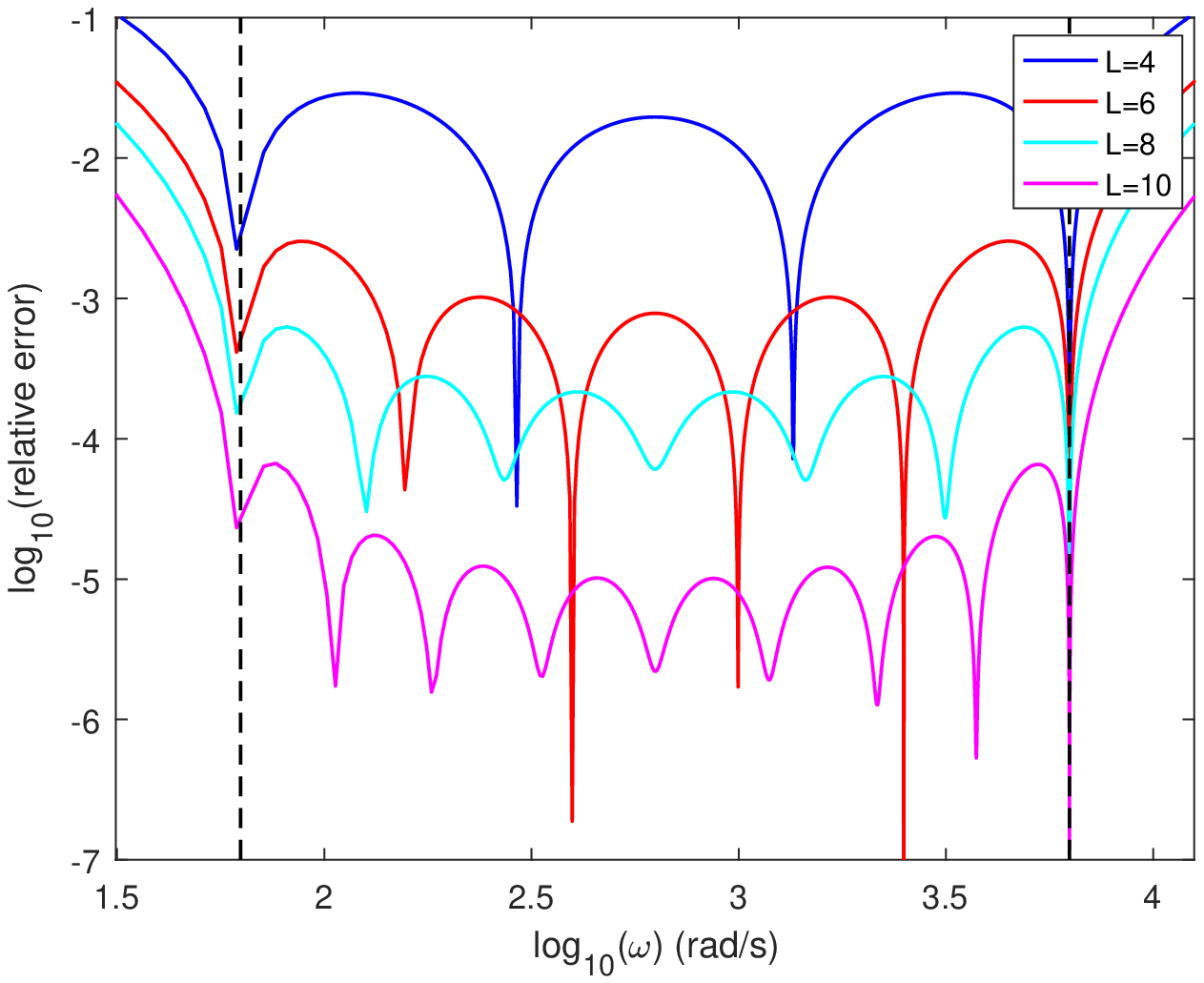}}
\subfloat[$M=2L$]{%
   \includegraphics[width=2in,height=1.5in]{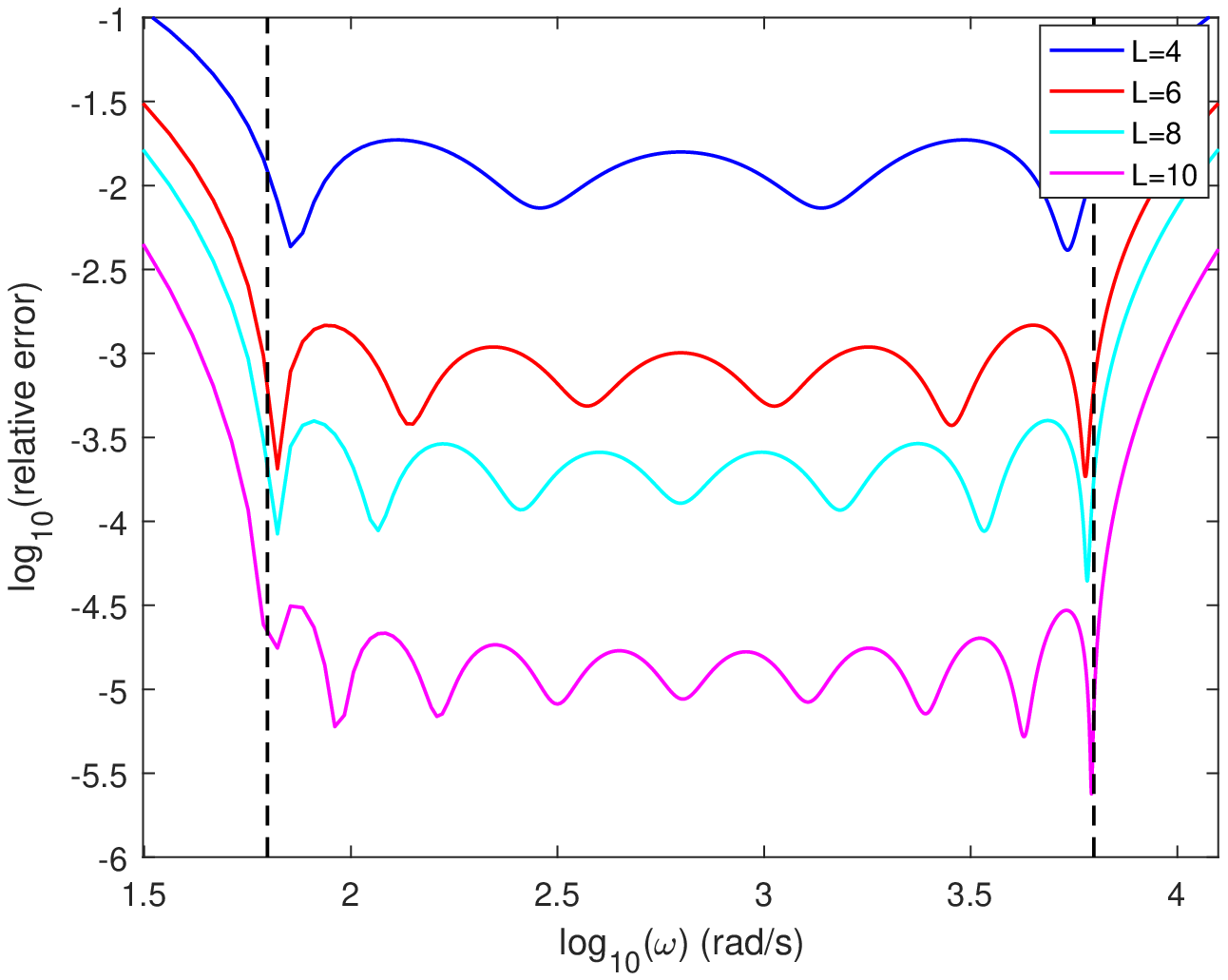}}
\subfloat[$M=3L$]{%
   \includegraphics[width=2in,height=1.5in]{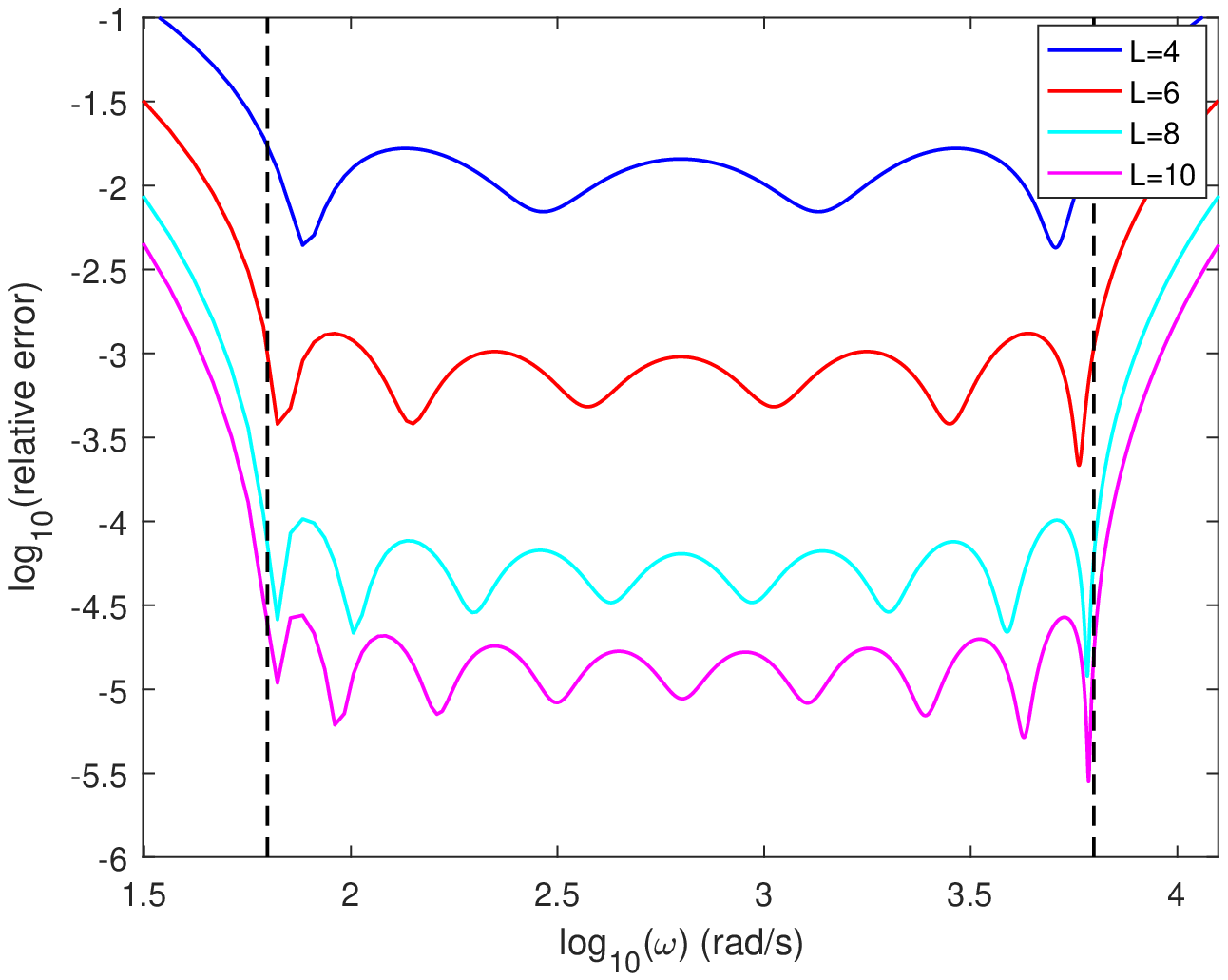}}
\caption{Relative errors of the square system ($M=L$) and overdetermined system ($M>L$), (a)-(c): results for $[0.5,\,5]$, (d)-(f): results for $[20\pi,\,2000\pi]$.  }
\label{fig:relativeerror}
\end{figure}
\begin{figure}[htb]
\centering
\subfloat[$\alpha=0.3$]{%
   \includegraphics[width=2in,height=1.5in]{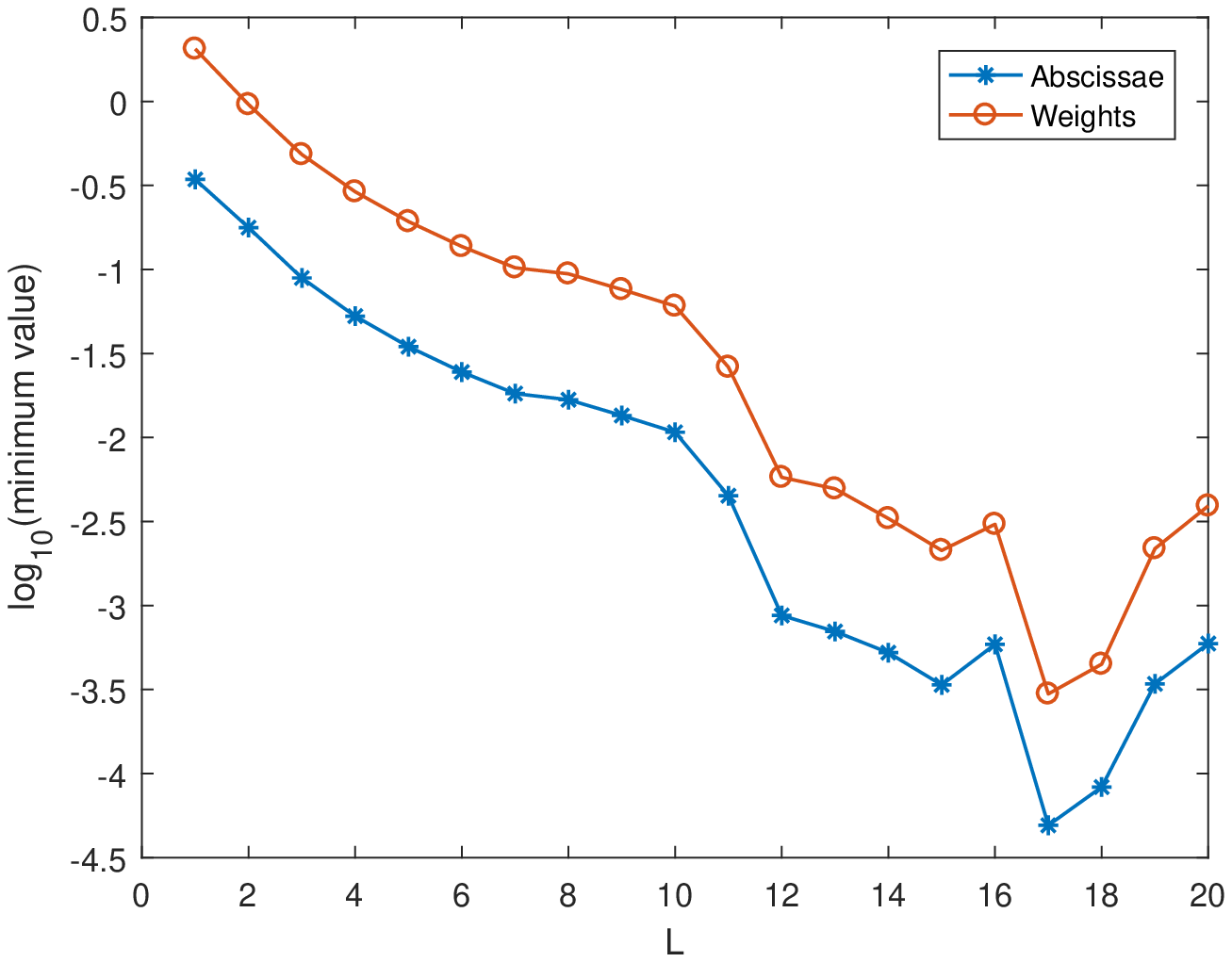}}
\subfloat[$\alpha=0.5$]{%
   \includegraphics[width=2in,height=1.5in]{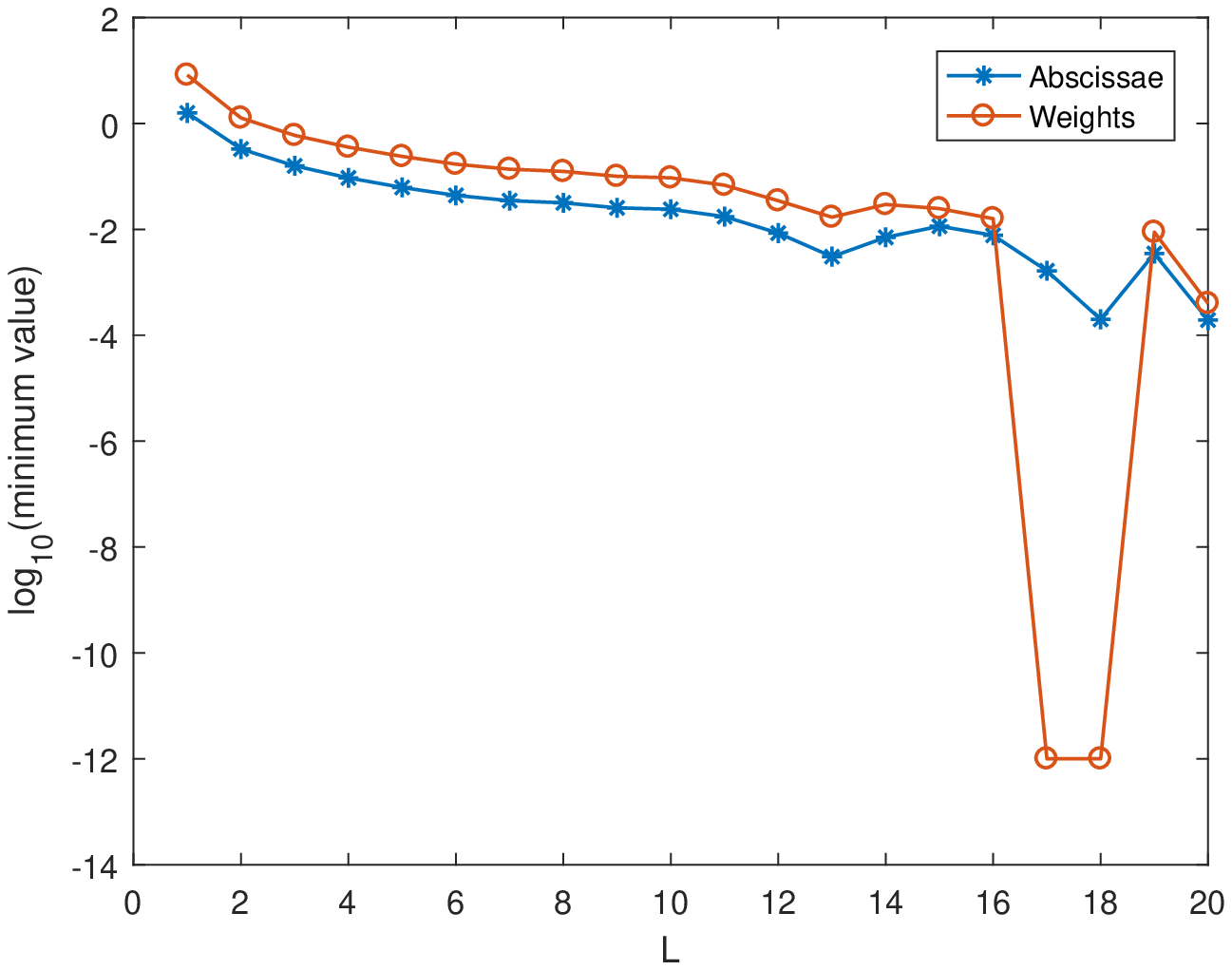}}
\subfloat[$\alpha=0.7$]{%
   \includegraphics[width=2in,height=1.5in]{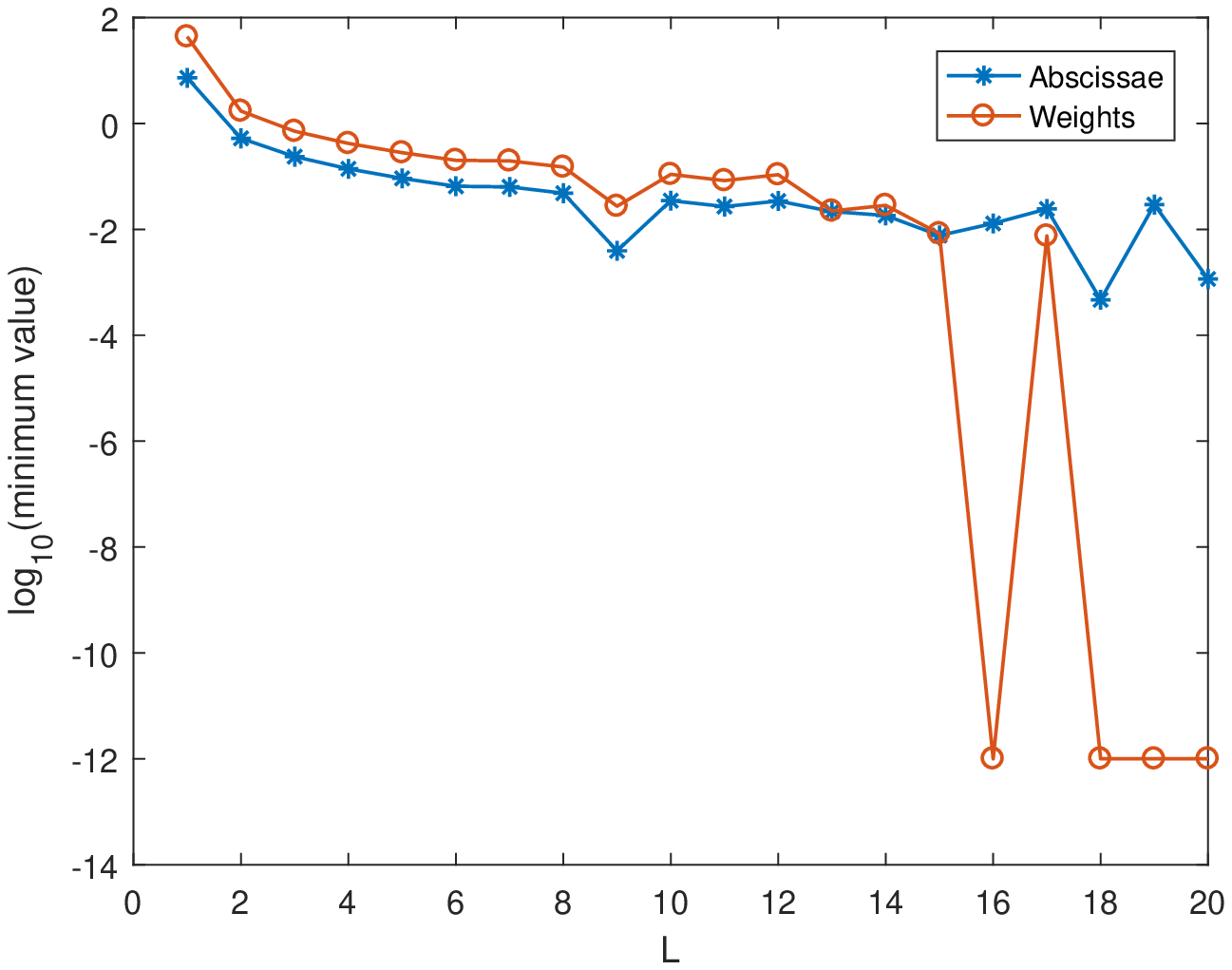}}\\
\subfloat[$\alpha=0.3$]{%
   \includegraphics[width=2in,height=1.5in]{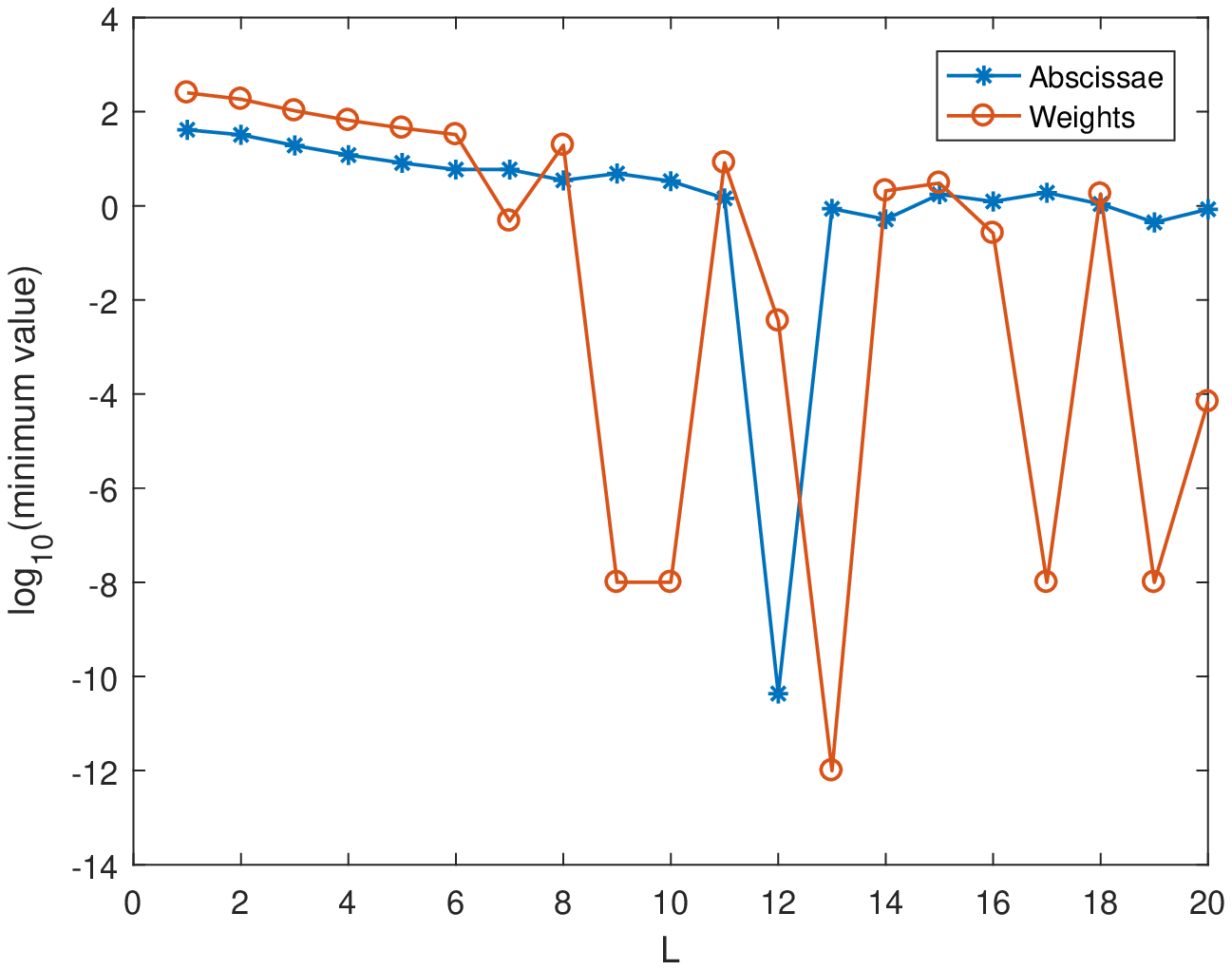}}
\subfloat[$\alpha=0.5$]{%
   \includegraphics[width=2in,height=1.5in]{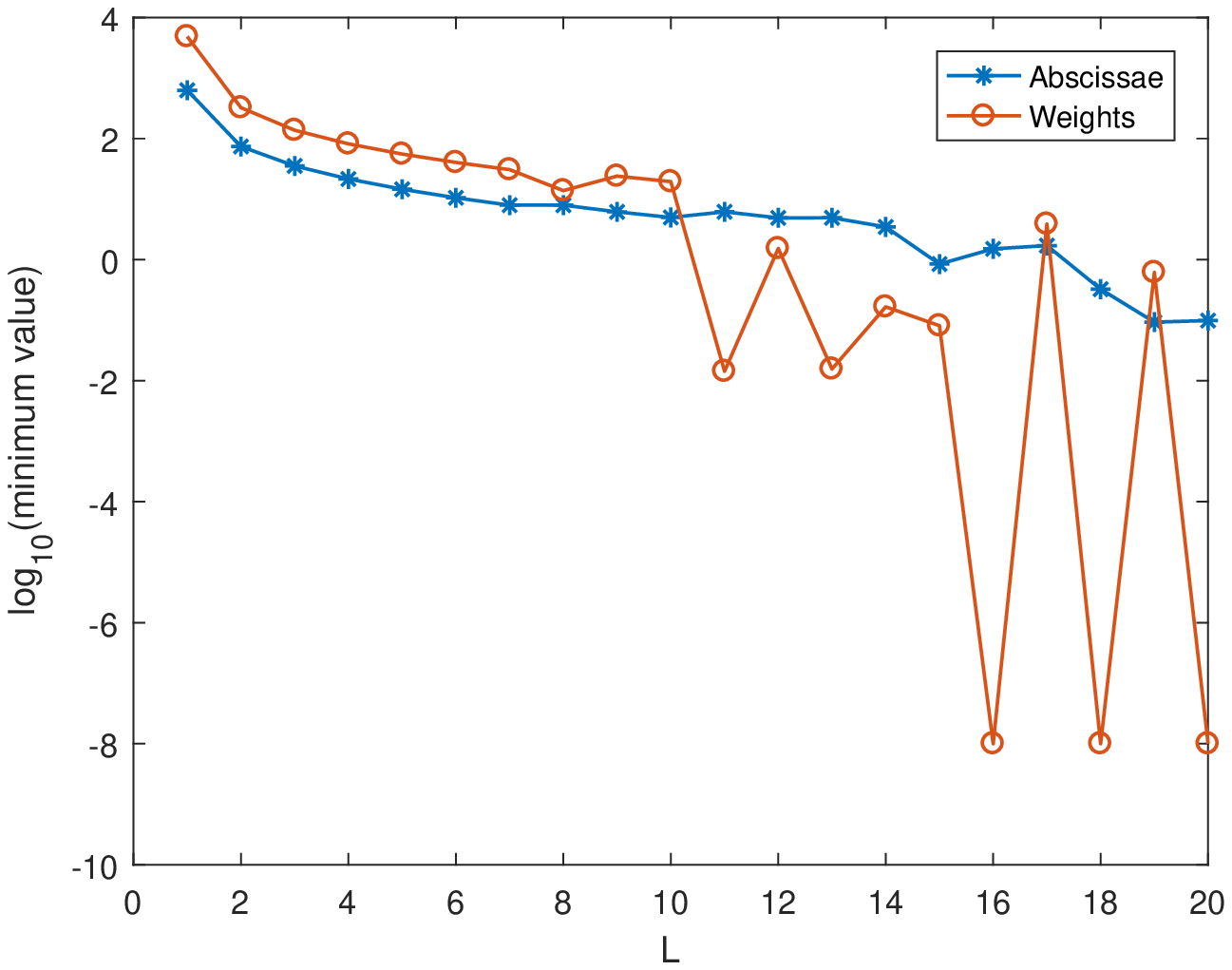}}
\subfloat[$\alpha=0.7$]{%
   \includegraphics[width=2in,height=1.5in]{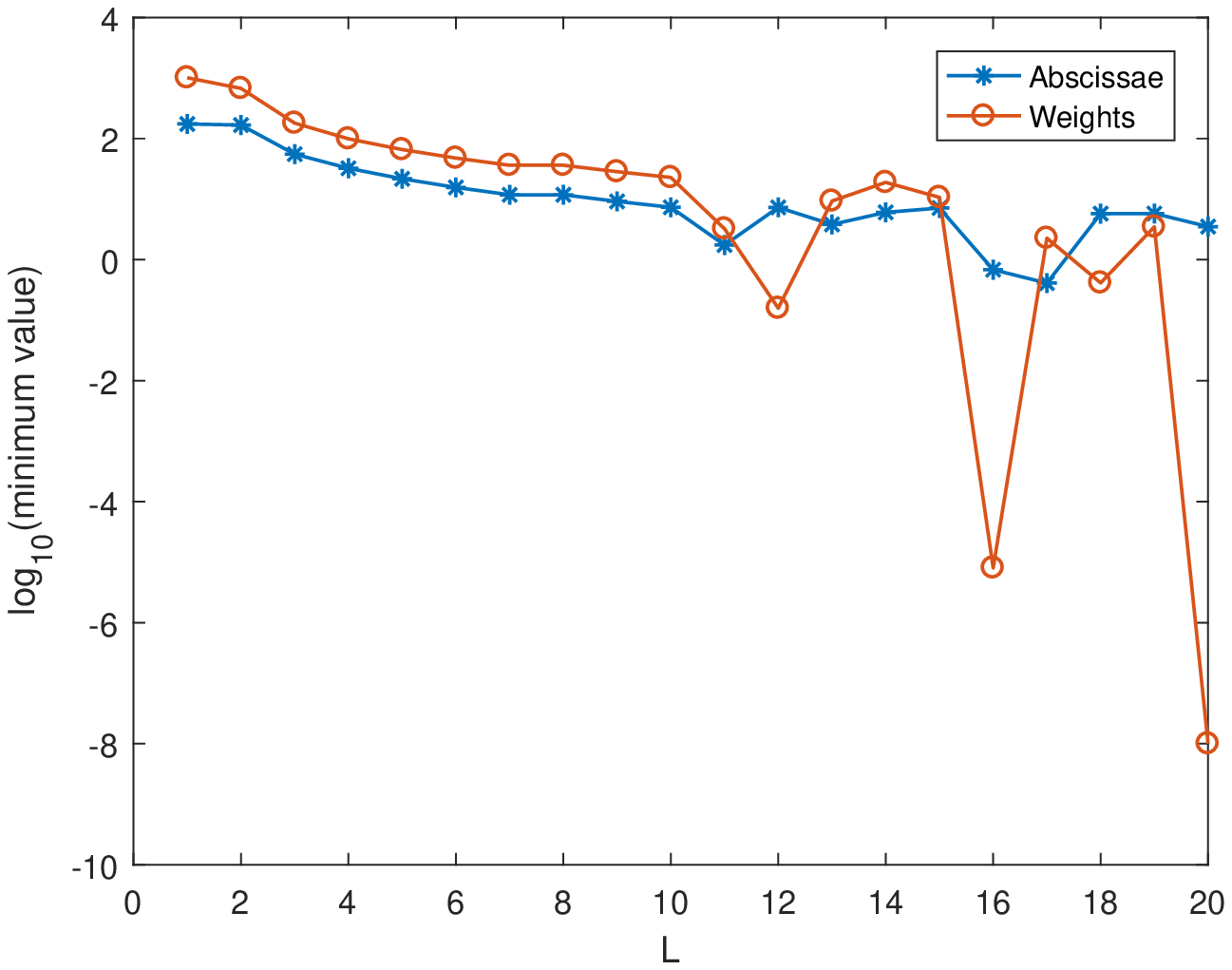}}
\caption{Positivity of weights ($\zeta_\ell$) and abscissae ($\lambda_\ell$) with varying $\alpha$ and $L$, (a)-(c): results for $[0.5,\,5]$, (d)-(f): results for $[20\pi,\,2000\pi]$.  }
\label{fig:positivesign}
\end{figure}

\section{Dispersion analysis}
\label{dispersion}
In this section, we present  the dispersion analysis of the Cole-Cole model (\ref{eq:MaxColeColeH})-(\ref{eq:MaxColeColeP}) and  the approximate  (\ref{eq:MaxColeColeHapp})-(\ref{eq:MaxColeColepsidapp}), where the external source is set to be zero. For this purpose, we assume that the  magnetic field $H$,
electric field $E$ and polarization field $P$
have the plane wave form
\begin{eqnarray*}
H&=&H_0\exp\left(i(k\, x-\omega t)\right),\\
E&=&E_{0}\exp\left(i( {k}\, x-\omega t)\right),\\
P&=&P_{0}\exp\left(i( {k}\, x -\omega t)\right),
\end{eqnarray*}
where $E_{0},\,H_0,\,P_{0}$  are the constant amplitudes, $\omega$  the angular frequency, and $k$ is the wave  number. Injecting $H$ and $E$ into (\ref{eq:MaxColeColeH}) gives
\begin{equation}
\label{eq:disH}
-\omega\mu_0 H_0=k E_{0},
\end{equation}
\iffalse%========================>
or equivalently
\begin{equation}\label{eq:disH}
H_0=-\frac{k}{\omega\mu_0}E_{0}.
\end{equation}
\fi%<===========================
Substituting $H,\,E,\,P$ to  (\ref{eq:MaxColeColeE}) leads to
\begin{equation}
-\omega\epsilon_0\epsilon_{\infty}E_0=k H_0
+\omega P_0.
\end{equation}
\iffalse%==================================>
which together with (\ref{eq:disH}) yields
\begin{equation}\label{eq:disEana}
-\omega\epsilon_0\epsilon_{\infty}E_0=-\frac{k^2}{\omega\mu_0}
E_0
+\omega P_0.
\end{equation}
\fi%<===================
According to  the polarization equation (\ref{eq:MaxColeColeP}), we have
\begin{equation}\label{eq:disPananoapp}
\left(1+\mathcal{Q}(\omega)\right)P_0=\epsilon_0
\left(\epsilon_s-\epsilon_{\infty}\right)E_0,\quad \mathcal{Q}(\omega)=(i\omega\tau_0)^{\alpha}.
\end{equation}
Combining \eqref{eq:disH}-(\ref{eq:disPananoapp}), we have
\begin{equation}\label{eq:disPanaeig}
k^2=
\left(\mu_0\epsilon_0\epsilon_{\infty}+\frac{\mu_0\epsilon_0
\left(\epsilon_s-\epsilon_{\infty}\right)}
{1+\mathcal{Q}(\omega)}\right)\omega^2.
\end{equation}
In the above equation, if we replace $\mathcal{Q}(\omega)$ by
$$
\mathcal{Q}^{\sharp}(\omega)=\tau_0^{\alpha}\frac{\sin(\pi\alpha)}{\pi}i\omega\sum_{\ell=1}^L
\frac{\zeta_\ell\lambda_\ell^{\alpha-1}}{i\omega+\lambda_\ell},
$$
 we can obtain the dispersion relations of the approximate   system  (\ref{eq:MaxColeColeHapp})-(\ref{eq:MaxColeColepsidapp}).

With the wave modes given by (\ref{eq:disPanaeig}), we deduce the phase velocities $c=\omega/\Re(k)$ and derive the attenuations $\eta=-\Im(k )$, where $\Re{k}$ and $\Im{k}$ are the real and imaginary parts of $k$, respectively. In Fig.\,\ref{fig:DispConcurv}, we illustrate the influence of $\alpha$ on the phase velocity and the attenuation, with the parameters  $\tau_0=\mu_0=\epsilon_0=\epsilon_p=\epsilon_{\infty}=1$.
Note that the case of $\alpha=1$ corresponds to the Debye model. In  addition, we use the same setting to demonstrate the  dispersion relations of the approximate system  (\ref{eq:MaxColeColeHapp})-(\ref{eq:MaxColeColepsidapp}). In Fig.\,\ref{fig:dispersioncurveapp}, we  present the comparison of the phase velocities and attenuations obtained from the exact $\mathcal{Q}(\omega)$ and the approximate $\mathcal{Q}^{\sharp}(\omega)$; it shows an excellent agreement between the exact and approximate results. Here the weights $\zeta_\ell$ and abscissae $\lambda_\ell$ are obtain by the non-linear optimization process (section \ref{optimization}) with $[20\,\pi,\,200\,\pi]$ and $L=6$.
\begin{figure}[htb]
\centering
\subfloat[phase velocity]{%
   \includegraphics[width=2.5in,height=2.0in]{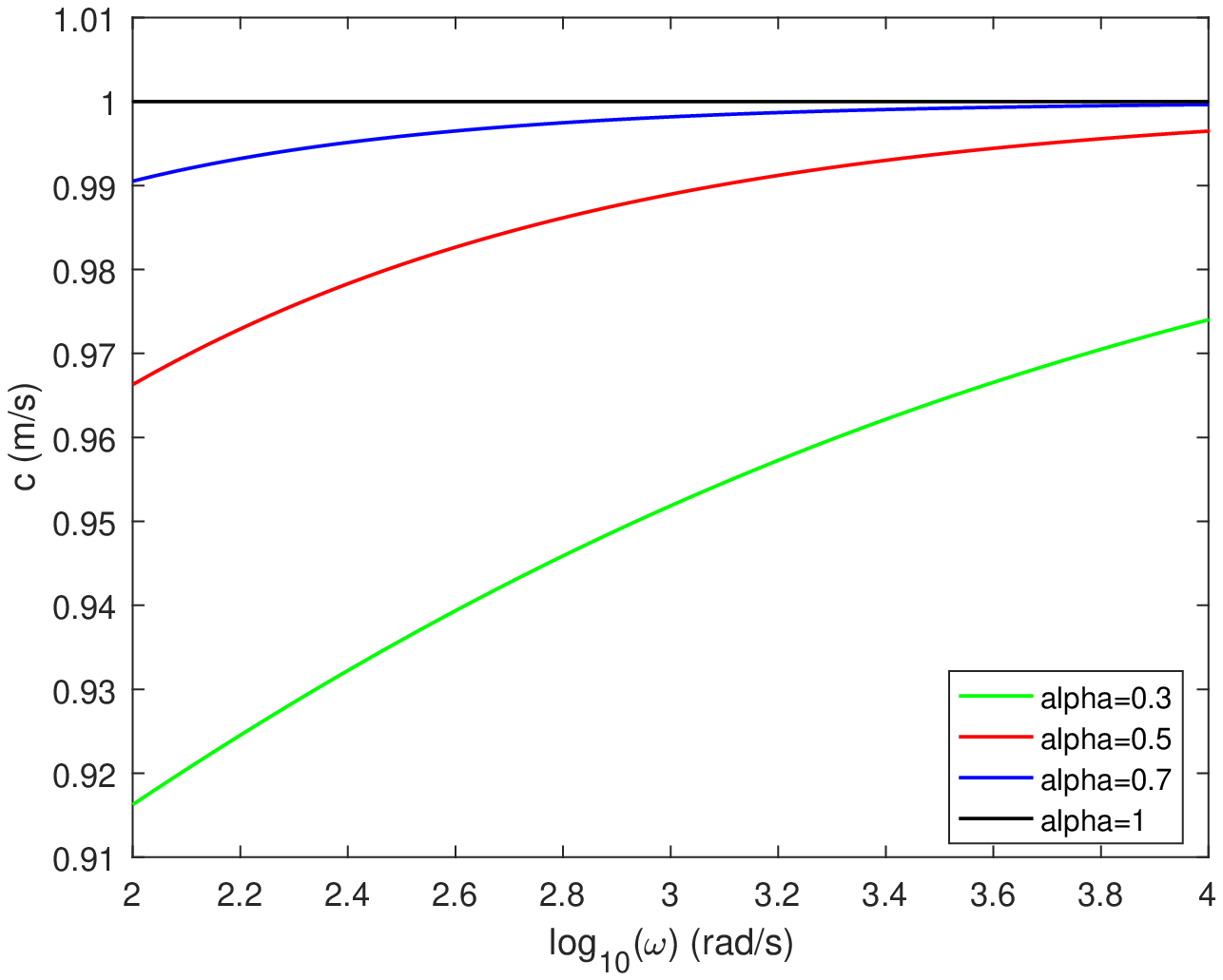}}
\subfloat[attenuation]{%
   \includegraphics[width=2.5in,height=2.0in]{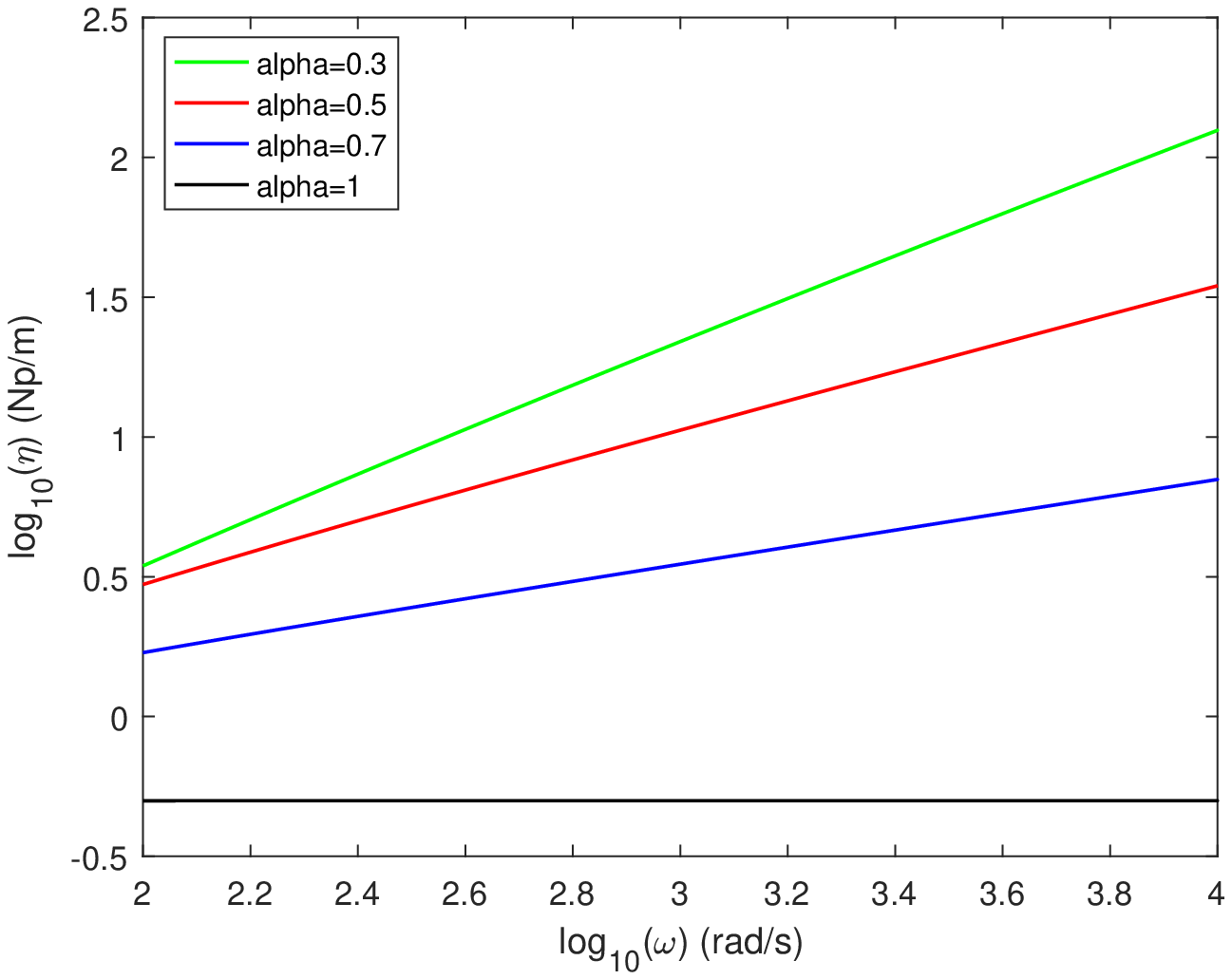}}
\caption{Dispersion relations of the Cole-Cole  model (\ref{eq:MaxColeColeH})-(\ref{eq:MaxColeColeP})  with  varying $\alpha=0.3,\,0.5,\,0.7,\,1$.  }
\label{fig:DispConcurv}
\end{figure}
\begin{figure}[htb]
\centering
\subfloat[ $\alpha=0.3$]{%
   \includegraphics[width=2in,height=1.5in]{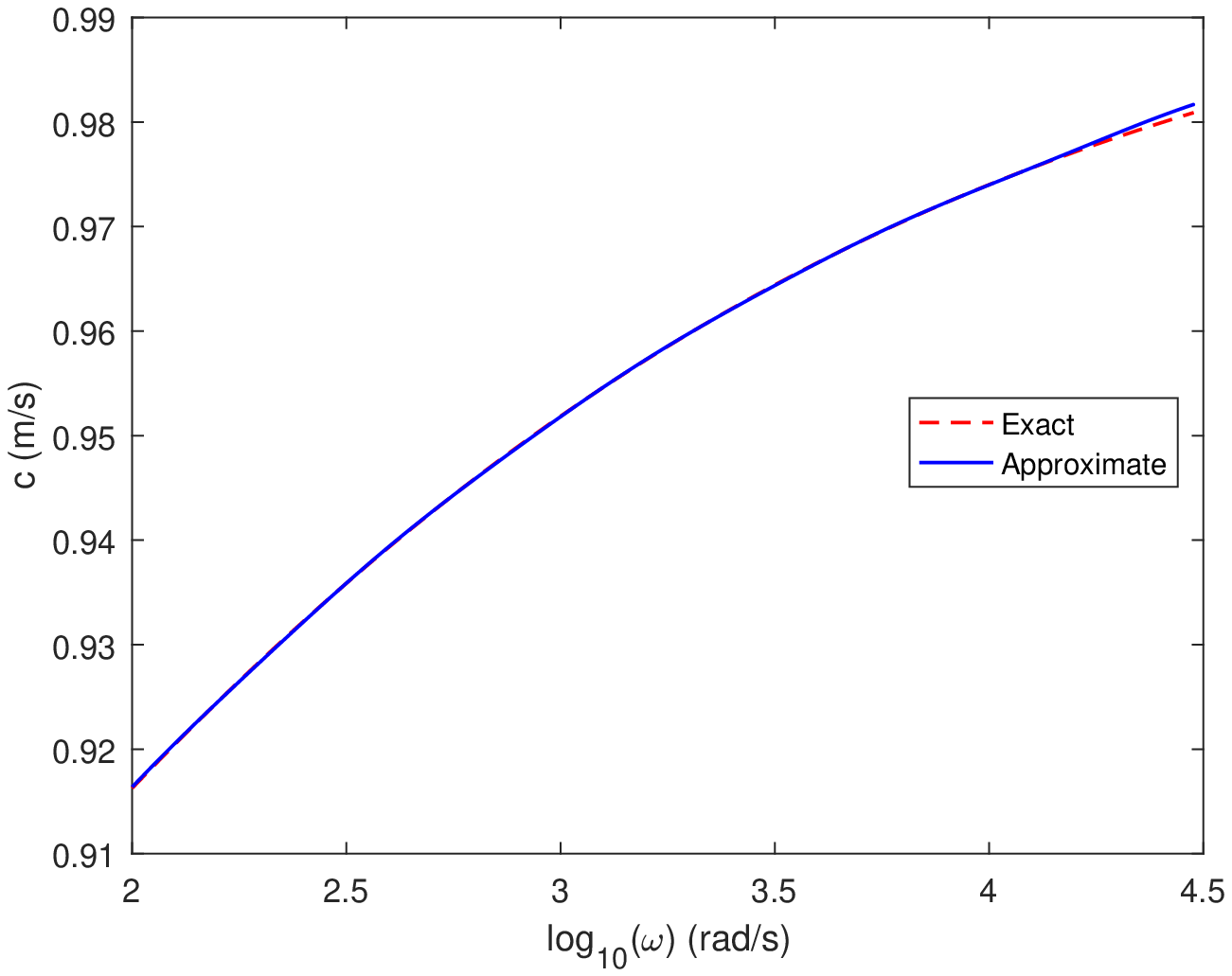}}
\subfloat[ $\alpha=0.5$]{%
   \includegraphics[width=2in,height=1.5in]{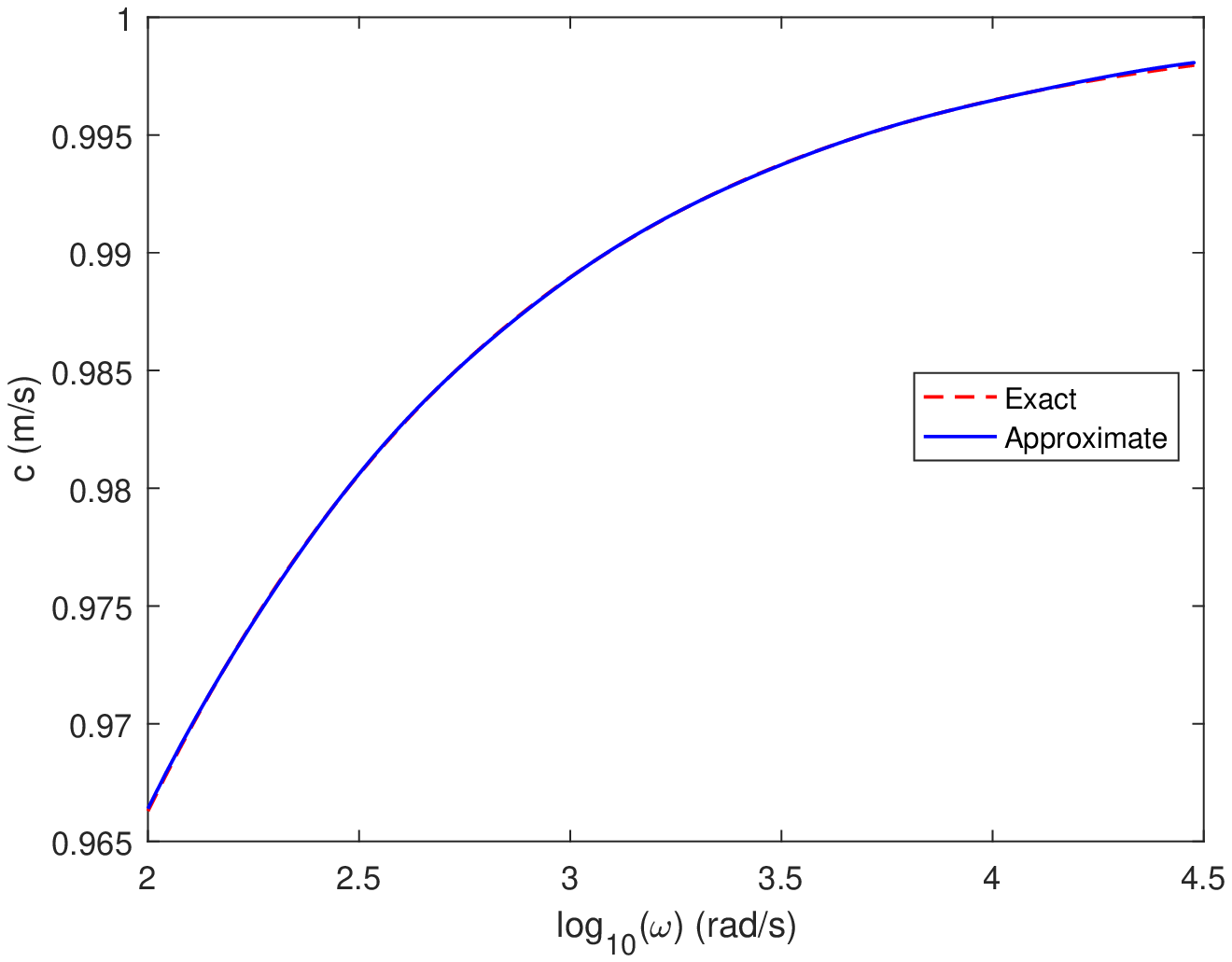}}
\subfloat[$\alpha=0.7$]{%
   \includegraphics[width=2in,height=1.5in]{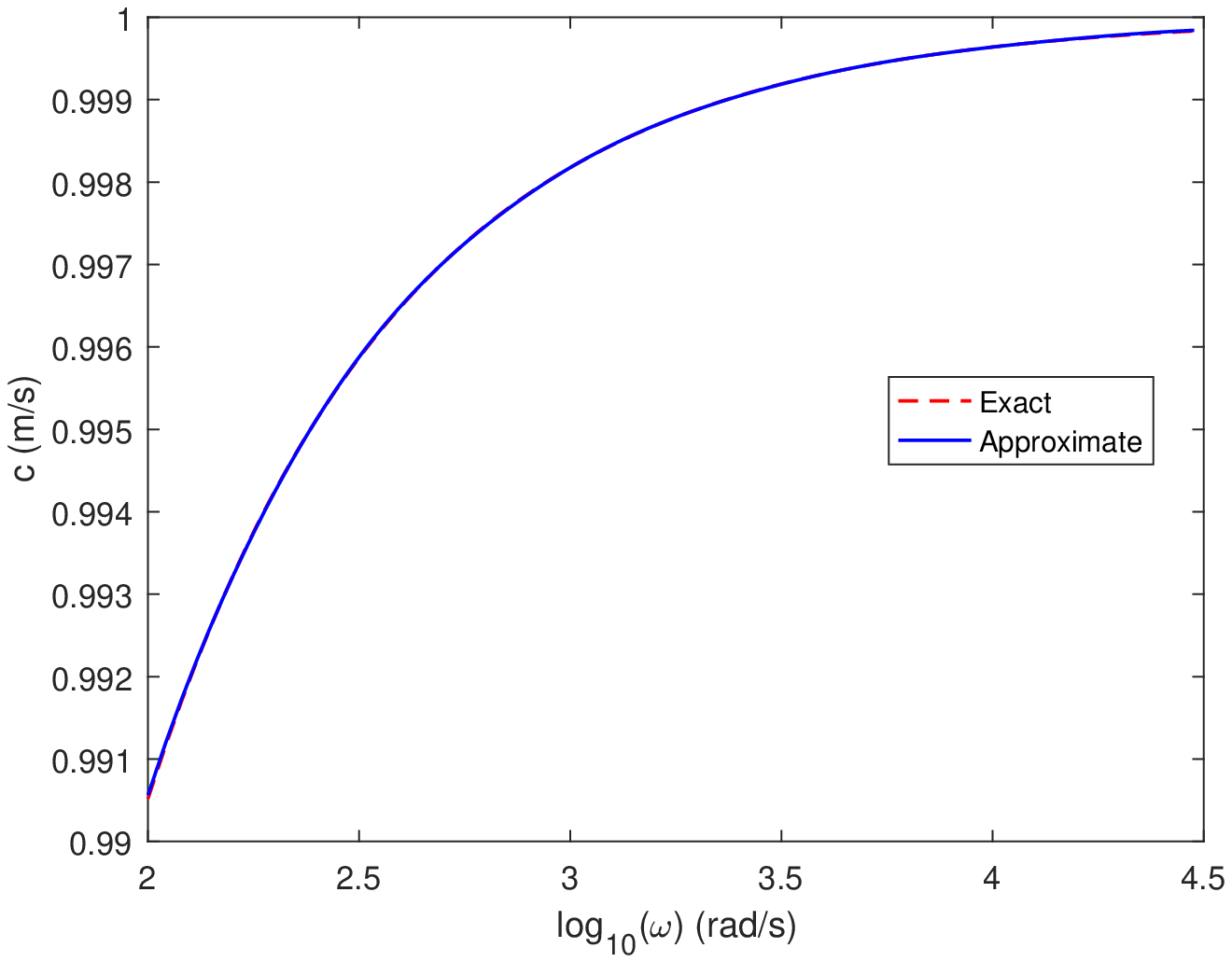}}\\
\subfloat[$\alpha=0.3$]{%
   \includegraphics[width=2in,height=1.5in]{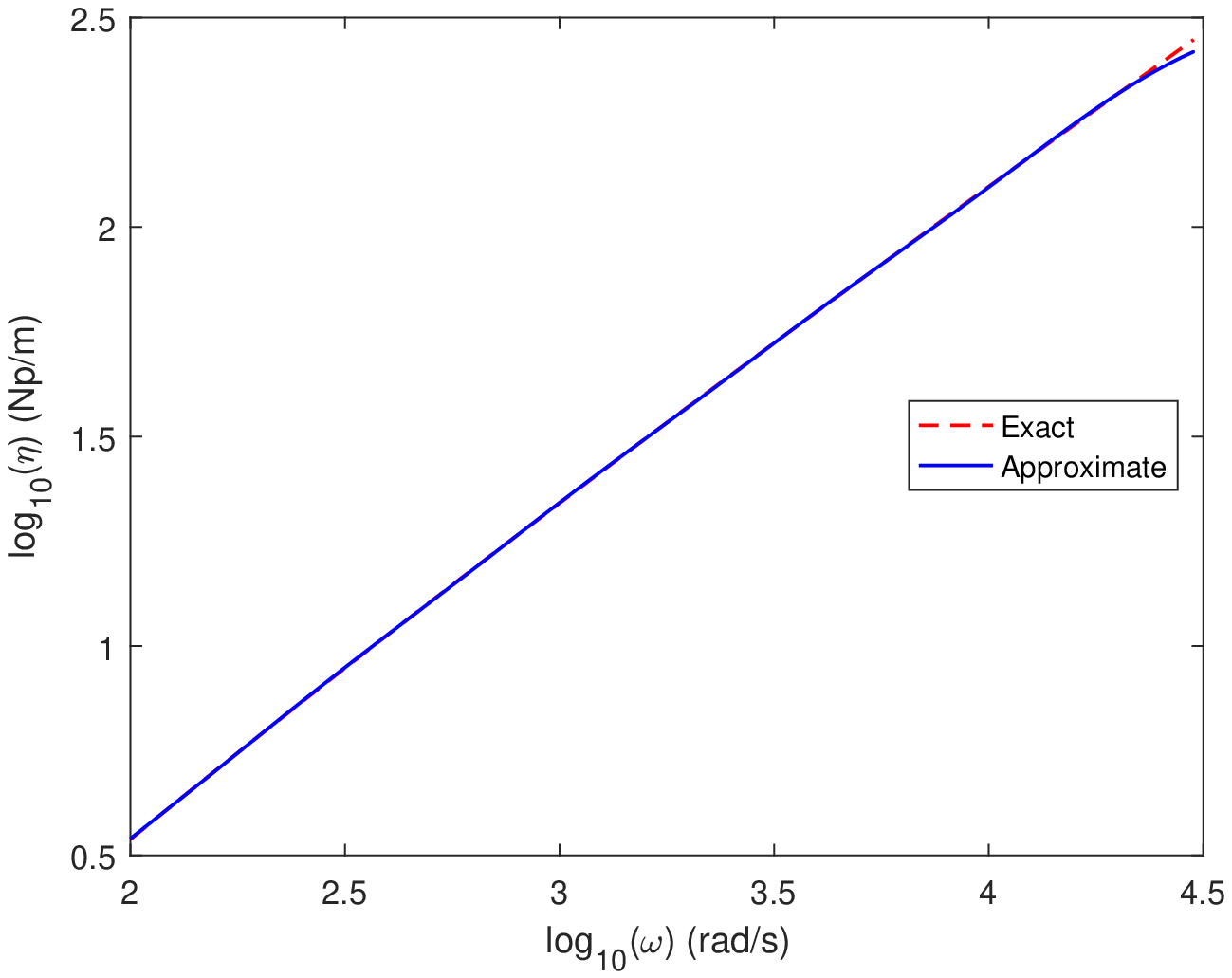}}
\subfloat[$\alpha=0.5$]{%
   \includegraphics[width=2in,height=1.5in]{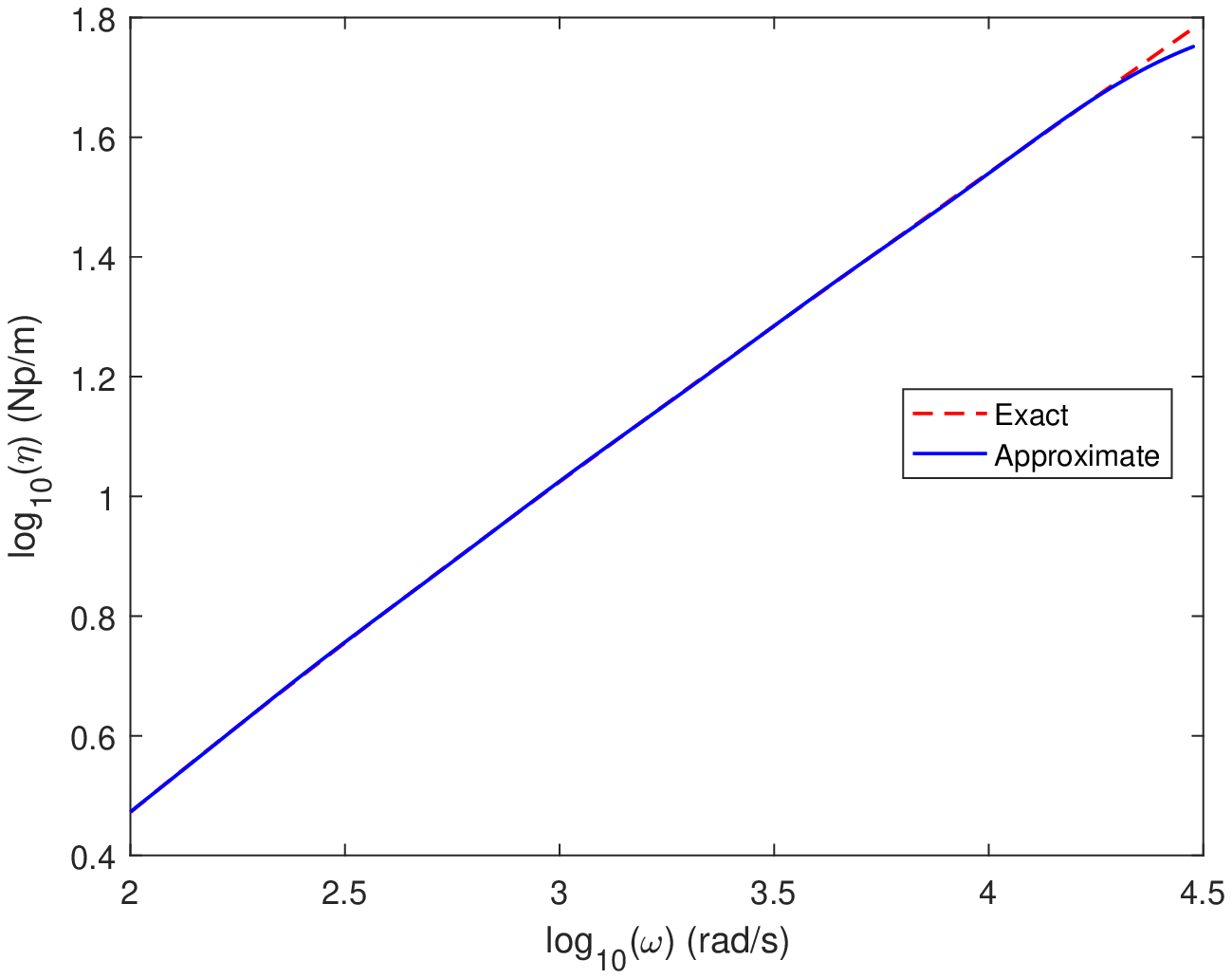}}
\subfloat[$\alpha=0.7$]{%
   \includegraphics[width=2in,height=1.5in]{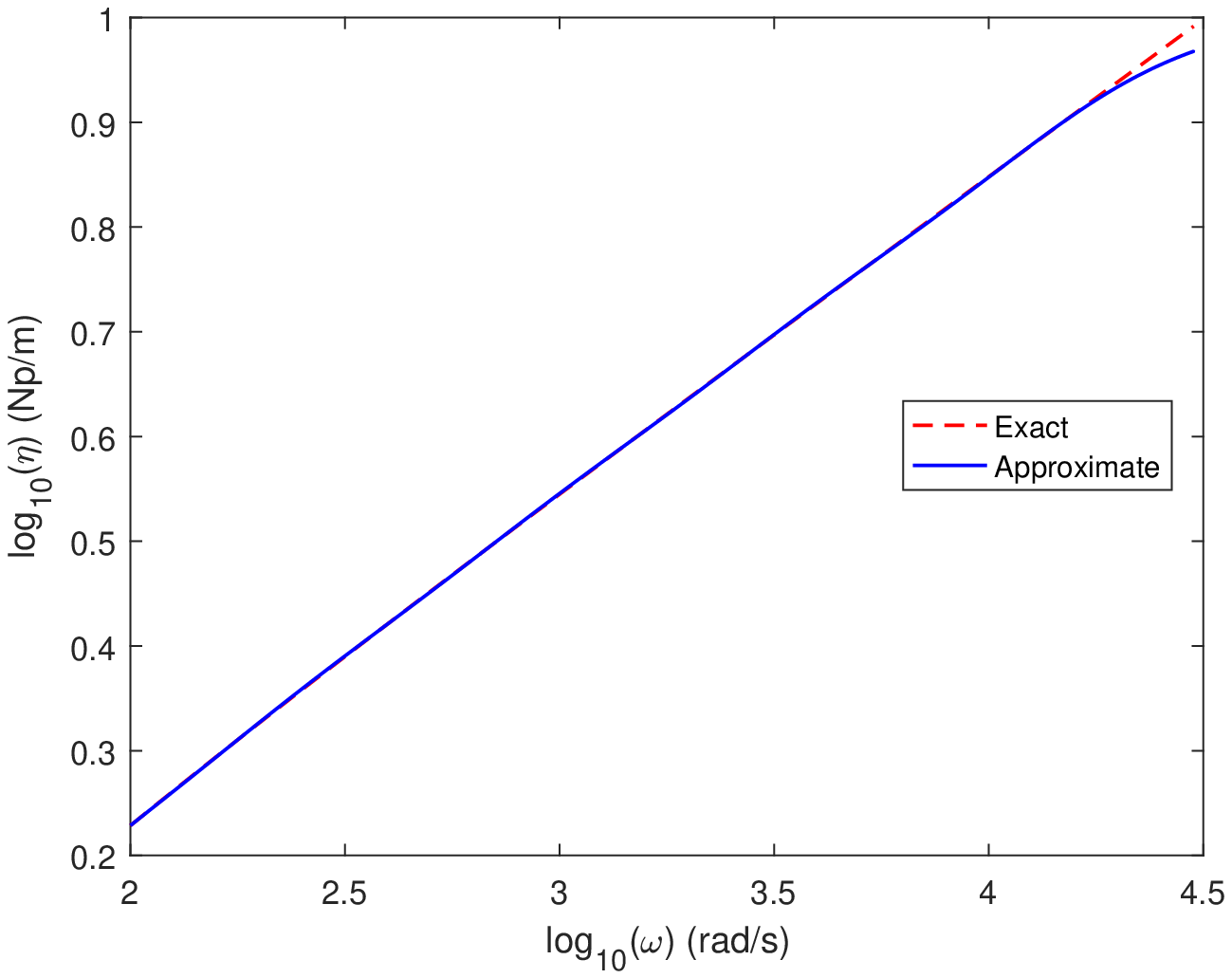}}
\caption{Dispersion relations of the approximate Cole-Cole  model  (\ref{eq:MaxColeColeHapp})-(\ref{eq:MaxColeColepsidapp}) with  varying $\alpha=0.3,\,0.5,\,0.7$, (a)-(c): phase velocity, (d)-(f): attenuation.}
\label{fig:dispersioncurveapp}
\end{figure}

\section{Numerical examples}
\label{numerical}
In this section, numerical examples are presented  to demonstrate the accuracy of the proposed algorithm. For simplicity, we set $\tau_0=\mu_0=\epsilon_0\epsilon_{\infty}=\epsilon_0(\epsilon_s-\epsilon_{\infty})=1$. To our best knowledge, the regularity theory of the solutions to \eqref{eq:MaxColeColeH}-\eqref{eq:iniMax} with zero external sources is still open. 
The source terms $F_i,\,i=1,2,3$  are added to  assume that the exact solutions are sufficiently smooth  and $P(x,t)$ have many vanishing derivatives at the initial time.
The time discretization is achieved by  BDF2 formula. More precisely, given initial data $U^0,\,U^1$ with $U=H,\,E,\,P$, we  have from the auxiliary equation  (\ref{eq:MaxColeColepsidapp})
$$
C_1\psi_\ell^n+C_2\psi_\ell^{n-1}+C_3\psi_\ell^{n-2}=-\lambda_\ell\psi_\ell^n
+C_{\alpha}\lambda_\ell^{\alpha-1}\left(C_1P^n+C_2P^{n-1}+C_3P^{n-2}\right),
$$
or equivalently
\begin{equation*}\label{eq:psinn}
\psi_\ell^n=\frac{C_{\alpha}}{C_1+\lambda_\ell}\lambda_\ell^{\alpha-1}\left(C_1P^n +C_2P^{n-1}+C_3P^{n-2}  \right)
-\frac{C_{\alpha}}{C_1+\lambda_\ell}\left(C_2\psi_\ell^{n-1}+C_3\psi_\ell^{n-2}\right),
\end{equation*}
where we have set
$$
C_1=\frac{3}{2\tau},\quad C_2=-\frac{2}{\tau},\quad C_3=\frac{1}{2\tau},\quad C_{\alpha}=\frac{\sin(\pi \alpha)}{\pi}.
$$
Injecting $\psi_\ell^n$ into (\ref{eq:MaxColeColePapp}), we derive
\begin{equation*}
\begin{aligned}
\left(1+\tau_0^{\alpha}C_1C_{\alpha}\sum_{\ell=1}^L
\frac{\zeta_\ell\lambda_\ell^{\alpha-1}}{C_1+\lambda_\ell}\right)P^n=&E^n-\tau_0^{\alpha}C_{\alpha}
\sum_{\ell=1}^L
\frac{\zeta_\ell\lambda_\ell^{\alpha-1}}{C_1+\lambda_\ell}\left(C_2P^{n-1}+C_3P^{n-2}\right)\\
&+\tau_0^{\alpha}\sum_{\ell=1}^L\frac{\zeta_\ell}{C_1+\lambda_\ell}\left(C_2\psi_\ell^{n-1}
+C_3\psi_\ell^{n-2}\right).
\end{aligned}
\end{equation*}
Using the standard BDF2 formula for (\ref{eq:MaxColeColeHapp}) and (\ref{eq:MaxColeColeEapp}) and combining $P^n$ give the computation scheme.
The first initial data are set to be   $U_0,\,U=H,\,E,\,P$ and the second initial data are obtained by using forward Euler formula to (\ref{eq:MaxColeColeHapp})-(\ref{eq:MaxColeColepsidapp}).

\subsection{Accuracy test with two external sources\label{test1}}
As the first example, we aim  to demonstrate the convergence rates of the DG scheme.
For this purpose, we consider the case of $F_3=0$ and
\begin{eqnarray*}
F_1&=&\pi\sin(\pi x)\left(\frac{2\,t^{2-\alpha}}{\Gamma(1-\alpha)(1-\alpha)(2-\alpha)}+t^2\right)
+2\pi\left(2\cos(\pi x)+\sin(\pi x)\right)\,t,\\
F_2&=&
\cos(\pi x)\left(\frac{2\, t^{1-\alpha}}{\Gamma(1-\alpha)(1-\alpha)}+4\,t \right)
-\pi^2\left(-2\sin(\pi x)+\cos(\pi x)\right)\,t^2.
\end{eqnarray*}
For this case, it can be checked that 
 (\ref{eq:MaxColeColeH})-(\ref{eq:MaxColeColeP})  are satisfied by the following exact solution
\begin{eqnarray*}
E(x,t)&=&\cos(\pi x)\left(\frac{2\, t^{2-\alpha}}{\Gamma(1-\alpha)(1-\alpha)(2-\alpha)}+t^2\right), \\
H(x,t)&=&\pi\,(2\cos(\pi x)+\sin(\pi x))\,t^2, \\
P(x,t)&=&\cos(\pi x)\,t^2.
\end{eqnarray*}

The initial data are obtained by setting $t=0$ of the  exact solutions and the periodic boundary condition is used. The computation domain is $\Omega=[0,\, 2]$ and the total simulation time is $T=2$.
The frequency band is $[0.5,\, 5]$ and the number of quadrature points is $L=20$. Moreover,
we set  $\tau=h^2$ for both $\mathbb{P}^1$ and $\mathbb{P}^2$ elements such that the temporal discretization error can be relatively negligible.
In Tables \ref{ex3alpha03}-\ref{ex3alpha07}, we report the spatial errors in the  $L^2$-norm and convergence orders with varying $\alpha$. The expected rates of  second order for $\mathbb{P}^1$ elements and third order for $\mathbb{P}^2$ elements are reached, which  confirm the optimal accuracy order of the proposed scheme.
\begin{table}
\centering
\begin{tabular}{p{0.5pt} p{5pt}  p{40pt} p{12pt} p{0.1pt} p{40pt} p{12pt} p{0.1pt} p{40pt} p{12pt}}
\hline
   \multirow{2}{*}{}&\multirow{2}{*}{ $N_x$} & \multicolumn{2}{c}{E}& & \multicolumn{2}{c}{H}& &\multicolumn{2}{c}{P} \\
\cline{3-4} \cline{6-7} \cline{9-10}
  &   & Error &Order &    & Error &Order &  & Error &Order\\
\hline
   \multirow{4}{*}{\ $\mathbb{P}^1$}
    &10 &1.403E-1  & --    &  & 5.118E-1  & --     & & 6.784E-2  &--    \\
    &20 &3.455E-2  & 2.022 &  & 1.220E-1  & 2.069  & & 1.678E-2  &2.015   \\
    &40 &8.553E-3  & 2.014 &  & 2.972E-2  & 2.037  & & 4.157E-3  &2.013   \\
    &80 &2.127E-3  & 2.008 &  & 7.330E-3  & 2.020  & & 1.034E-3  &2.007    \\
\hline
   \multirow{4}{*}{\ $\mathbb{P}^2$}
    &10 &2.834E-2  & --    &  & 6.605E-3  & --    & & 1.353E-2  &--    \\
    &20 &3.605E-3  & 2.975 &  & 8.868E-4  & 2.897 & & 1.727E-3  &2.970    \\
    &40 &4.535E-4  & 2.991 &  & 1.172E-4  & 2.920 & & 2.178E-4  &2.987   \\
    &80 &5.682E-5  & 2.997 &  & 1.522E-5  & 2.945 & & 2.728E-5  &2.997     \\
\hline
\end{tabular}
\caption{The $L^2$ errors and convergence orders of E,\,H,\,P for Example \ref{test1} with $\alpha=0.3$.}
\label{ex3alpha03}
\end{table}
\begin{table}
\centering
%\begin{tabular}{@{}ccccccc@{}}
\begin{tabular}{p{0.5pt} p{5pt}  p{40pt} p{12pt} p{0.1pt} p{40pt} p{12pt} p{0.1pt} p{40pt} p{12pt}}
\hline
   \multirow{2}{*}{}&\multirow{2}{*}{ $N_x$} & \multicolumn{2}{c}{E}& & \multicolumn{2}{c}{H}& &\multicolumn{2}{c}{P} \\
\cline{3-4} \cline{6-7} \cline{9-10}
  &   & Error &Order &    & Error &Order &  & Error &Order\\
\hline
   \multirow{4}{*}{\ $\mathbb{P}^1$}
    &10 &1.405E-1  & --    &  & 5.110E-1  & --     & & 6.722E-2  &--    \\
    &20 &3.470E-2  & 2.018 &  & 1.219E-1  & 2.068  & & 1.670E-2  &2.009   \\
    &40 &8.621E-3  & 2.009 &  & 2.972E-2  & 2.036  & & 4.148E-3  &2.009   \\
    &80 &2.151E-3  & 2.003 &  & 7.335E-3  & 2.019  & & 1.033E-3  &2.006    \\
\hline
   \multirow{4}{*}{\ $\mathbb{P}^2$}
    &10 &2.852E-2  & --    &  & 5.938E-3  & --    & & 1.310E-2  &--    \\
    &20 &3.645E-3  & 2.968 &  & 7.834E-4  & 2.922 & & 1.681E-3  &2.962    \\
    &40 &4.599E-4  & 2.987 &  & 1.019E-4  & 2.943 & & 2.124E-4  &2.985   \\
    &80 &5.774E-5  & 2.994 &  & 1.312E-5  & 2.957 & & 2.654E-5  &3.000     \\
\hline
\end{tabular}
\caption{The $L^2$ errors and  convergence orders of E,\,H,\,P for Example \ref{test1} with $\alpha=0.5$.}
\label{ex3alpha05}
\end{table}
\begin{table}
\centering
%\begin{tabular}{@{}ccccccc@{}}
\begin{tabular}{p{0.5pt} p{5pt}  p{40pt} p{12pt} p{0.1pt} p{40pt} p{12pt} p{0.1pt} p{40pt} p{12pt}}
\hline
   \multirow{2}{*}{}&\multirow{2}{*}{ $N_x$} & \multicolumn{2}{c}{E}& & \multicolumn{2}{c}{H}& &\multicolumn{2}{c}{P} \\
\cline{3-4} \cline{6-7} \cline{9-10}
  &   & Error &Order &    & Error &Order &  & Error &Order\\
\hline
   \multirow{4}{*}{\ $\mathbb{P}^1$}
    &10 &1.394E-1  & --    &  & 5.100E-1  & --     & & 6.669E-2  &--    \\
    &20 &3.454E-2  & 2.013 &  & 1.218E-1  & 2.066  & & 1.662E-2  &2.005   \\
    &40 &8.628E-3  & 2.001 &  & 2.973E-2  & 2.035  & & 4.133E-3  &2.008   \\
    &80 &2.167E-3  & 1.993 &  & 7.341E-3  & 2.018  & & 1.028E-3  &2.007    \\
\hline
   \multirow{4}{*}{\ $\mathbb{P}^2$}
    &10 &2.879E-2  & --    &  & 5.601E-3  & --    & & 1.266E-2  &--    \\
    &20 &3.712E-3  & 2.955 &  & 7.000E-4  & 3.000 & & 1.626E-3  &2.961    \\
    &40 &4.745E-4  & 2.968 &  & 8.365E-5  & 3.065 & & 2.042E-4  &2.993   \\
    &80 &6.082E-5  & 2.964 &  & 1.043E-5  & 3.004 & & 2.423E-5  &3.075     \\
\hline
\end{tabular}
\caption{The $L^2$ errors and convergence orders of E,\,H,\,P for Example \ref{test1} with $\alpha=0.7$.}
\label{ex3alpha07}
\end{table}

In order to  test the efficiency of the proposed scheme, for comparison, we  also implement the following scheme \cite{LiHuangLinCOLE} to discretize  the fractional derivative 
\begin{eqnarray}\label{def:directscheme}
\frac{\partial^{\alpha}\,P(x,\,t_n)}{\partial t^{\alpha}}&=&
a_0\,P^n(x)+\sum_{m=1}^{n-1}\left(a_{n-m}-a_{n-m-1}\right)P^m(x)
+\mathcal{O}(\tau^{\frac{3}{2}}),\\
\nonumber a_m&=&\frac{\tau^{-\alpha}}{\Gamma(2-\alpha)}\left[ \left(m+1\right)^{1-\alpha}-m^{1-\alpha}\right],
\end{eqnarray}
where $\tau$ is the time step. We fix $\alpha=0.5$ and compare the  CPU time  between  our fast algorithm and the direct scheme (\ref{def:directscheme}). The linear  complexity   and  a significant  reduction of the CPU time by our fast algorithm are observed in Fig.\,\ref{fig:CPUT1}, where the number of spatial cells is fixed to be $Nx=10$ and the numbers of time steps are  $Nt=10000,\,20000,\,40000,\,80000,\,160000$.
\begin{figure}[htb]
\centering
\includegraphics[width=2.5in,height=2.0in]{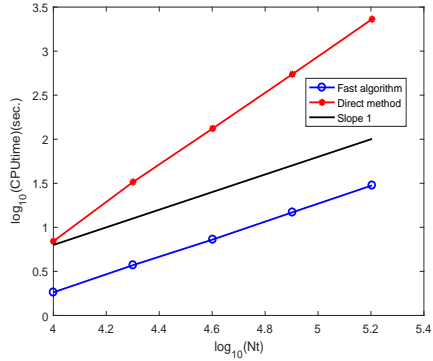}
\caption{The CPU time with fixed $Nx=10$ and different $Nt$ for Example \ref{test1}.}
\label{fig:CPUT1}
\end{figure}

\subsection{Energy analysis\label{test2}}
The purpose of  this example is  to illustrate the energy properties of the Cole-Cole model (\ref{eq:MaxColeColeH})-(\ref{eq:MaxColeColeP}).  To this end, we use $\mathbb{P}^1$ elements to solve the approximate system (\ref{eq:MaxColeColeHapp})-(\ref{eq:MaxColeColepsidapp})   with zero external force terms and periodic boundary condition and the following  initial  data
\begin{eqnarray*}
E_0(x)&=&\cos(\pi x)\sin(\pi x), \\
H_0(x)&=&2\pi\cos(\pi x)+\pi\sin(\pi x), \\
P_0(x)&=&0.
\end{eqnarray*}
The computation domain is  $\Omega=[0,\,2]$, which is  partitioned  by $800$ uniform elements.
The total simulation time is $T=2.5$ and the time step is set to be $\tau=h$.
The weights $\zeta_\ell$ and abscissae $\lambda_\ell$ are obtained by taking $L=20$ with frequency range $[0.5,\, 5]$.  Fig.\,\ref{fig:energyInitial} demonstrates the energy behaviors with respect to the evolution time, where different fractional orders of $\alpha=0.3,\,0.5,\,0.7$ are considered. Since the coefficients are constant in this example, the \emph{classical} energy $\tilde{\mathcal{E}}$ defined in \eqref{def:energyLi} and $\mathcal{E}_1(=\mathcal{E}_1^\sharp)$ differ only by a constant factor. Therefore the behavior of $\mathcal{E}_1^\sharp$ is the same as that of $\tilde{\mathcal{E}}$. Numerical results shown in Fig.\,\ref{fig:energyInitial}  are summarized below.

\begin{itemize}
\item As is indicated by the blue curves in Fig.\,\ref{fig:energyInitial} (d)-(f), the  energy  $\mathcal{E}^{\sharp}$ is  monotonically  decreasing in the sense that  $\mathcal{E}^{\sharp}(t_n)-\mathcal{E}^{\sharp}(t_{n+1})>0$ for all $n$. This is consistent with Theorem \ref{stableMaxwellCole}.

\item The nonlinear constraint optimization project preserves the positivity of the diffusive energy $\mathcal{E}^{\sharp}_2(t)$ given in (\ref{eq:energyE2primapp}) and the total energy   $\mathcal{E}^{\sharp}$ given in (\ref{eq:energyappTotalEp}), which can be  seen from the red and blue curves in Fig.\,\ref{fig:energyInitial} (a)-(c). Hence the approximate system  (\ref{eq:MaxColeColeHapp})-(\ref{eq:MaxColeColepsidapp}) is stable by Theorem \ref{stableMaxwellColeapp}.

\item As is demonstrated by the green curves in Fig.\,\ref{fig:energyInitial} (a)-(c), the classical energy function defined in (\ref{def:energy1}) satisfies $\widetilde{\mathcal{E}}(t_n)<\widetilde{\mathcal{E}}(t_0)$; this is consistent with Lemma 2.1 in \cite{LiHuangLinCOLE}. However, as can be observed from the green curves in Fig.\,\ref{fig:energyInitial} (d)-(f), the classical energy $\widetilde{\mathcal{E}}(t)$ is not a  monotonically decreasing function because  $\mathcal{E}_1(t_n)-\mathcal{E}_1(t_{n+1})<0$ for some $n$. This 'non-decreasing'  phenomenon  has also been   reported  in \cite{WangHuang} and \cite{bai2022second}.

\end{itemize}
\begin{figure}[htb]
\centering
\subfloat[$\alpha=0.3$]{%
   \includegraphics[width=2in,height=1.5in]{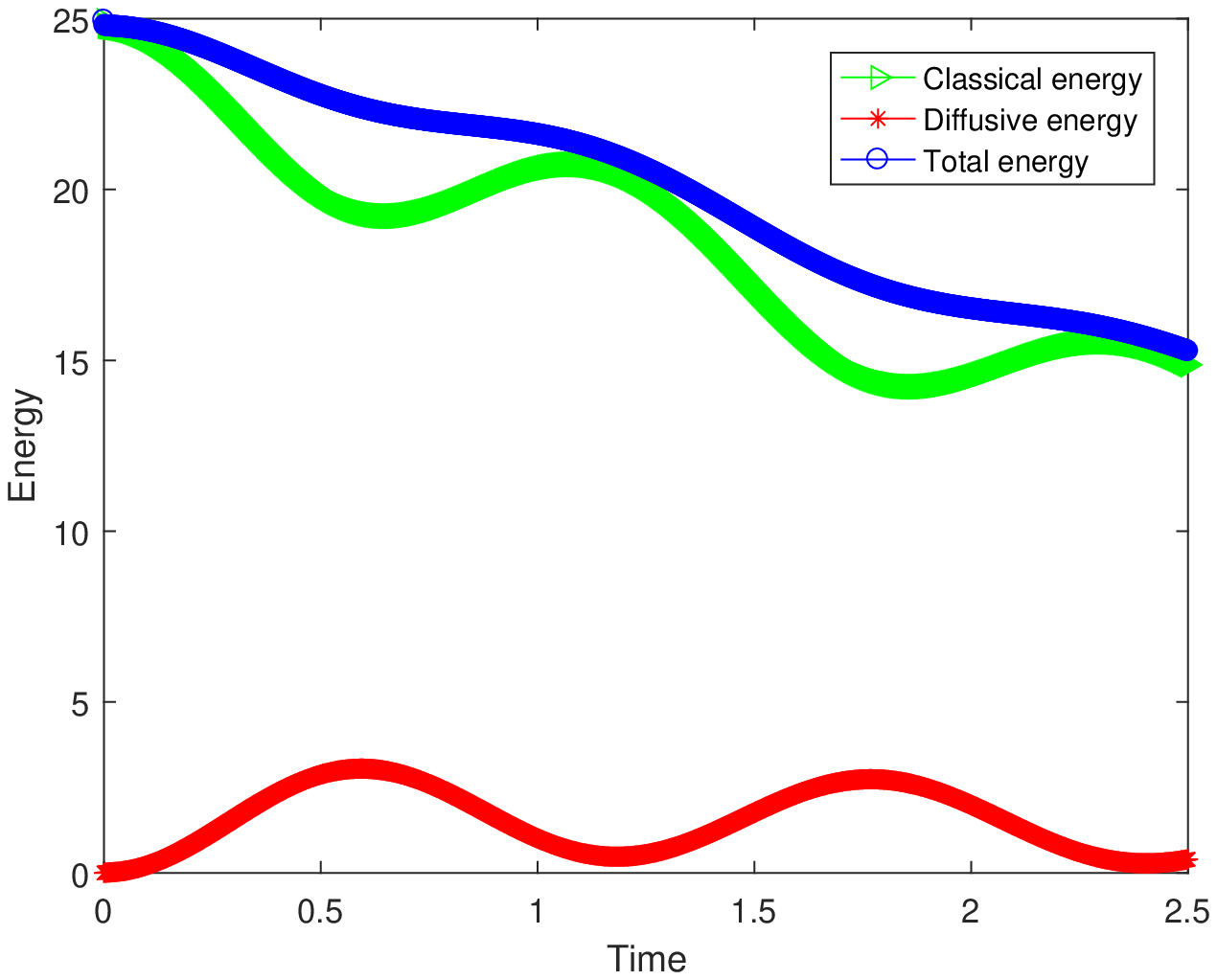}}
\subfloat[$\alpha=0.5$]{%
   \includegraphics[width=2in,height=1.5in]{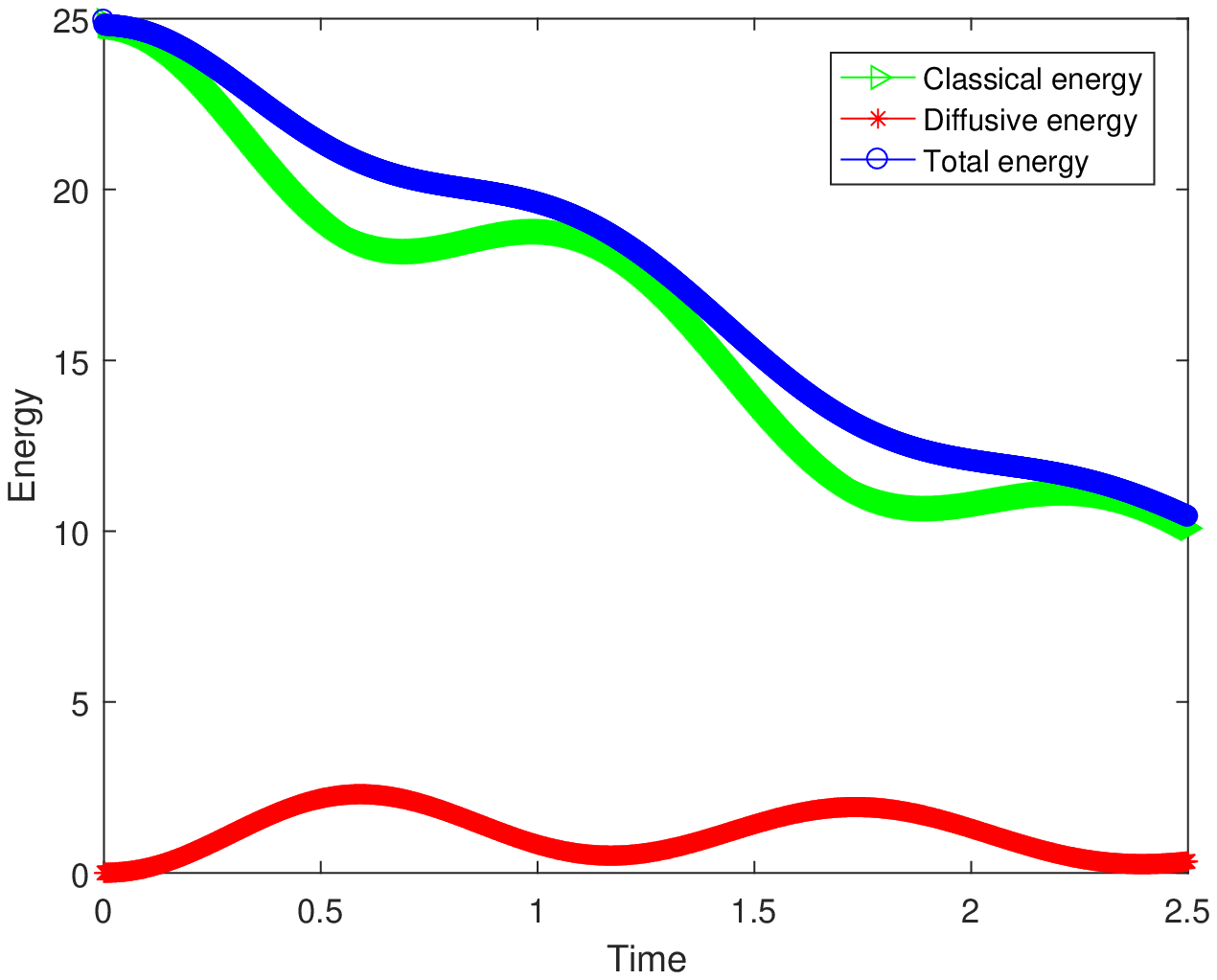}}
\subfloat[$\alpha=0.7$]{%
   \includegraphics[width=2in,height=1.5in]{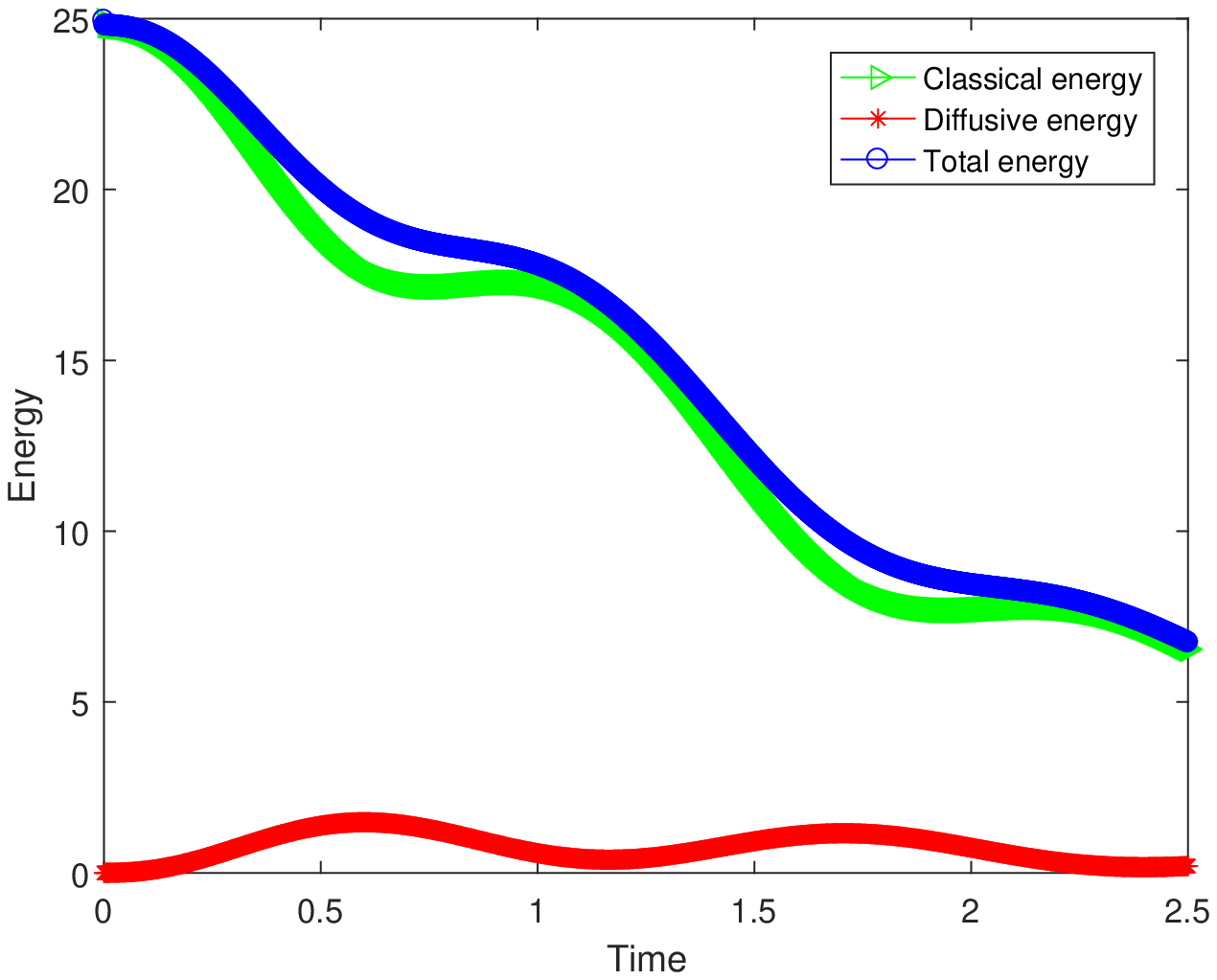}}\\
\subfloat[$\alpha=0.3$]{%
   \includegraphics[width=2in,height=1.5in]{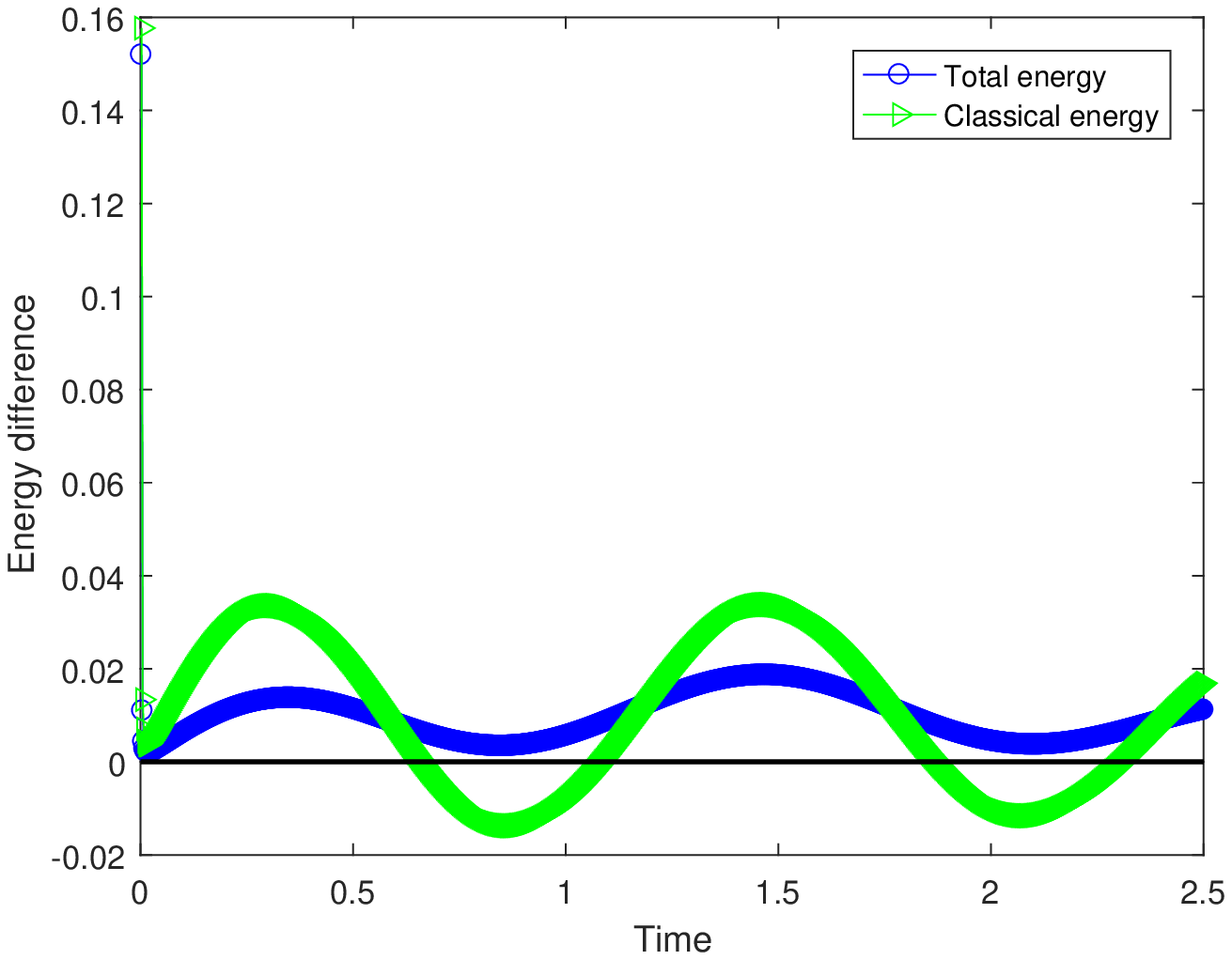}}
\subfloat[$\alpha=0.5$]{%
   \includegraphics[width=2in,height=1.5in]{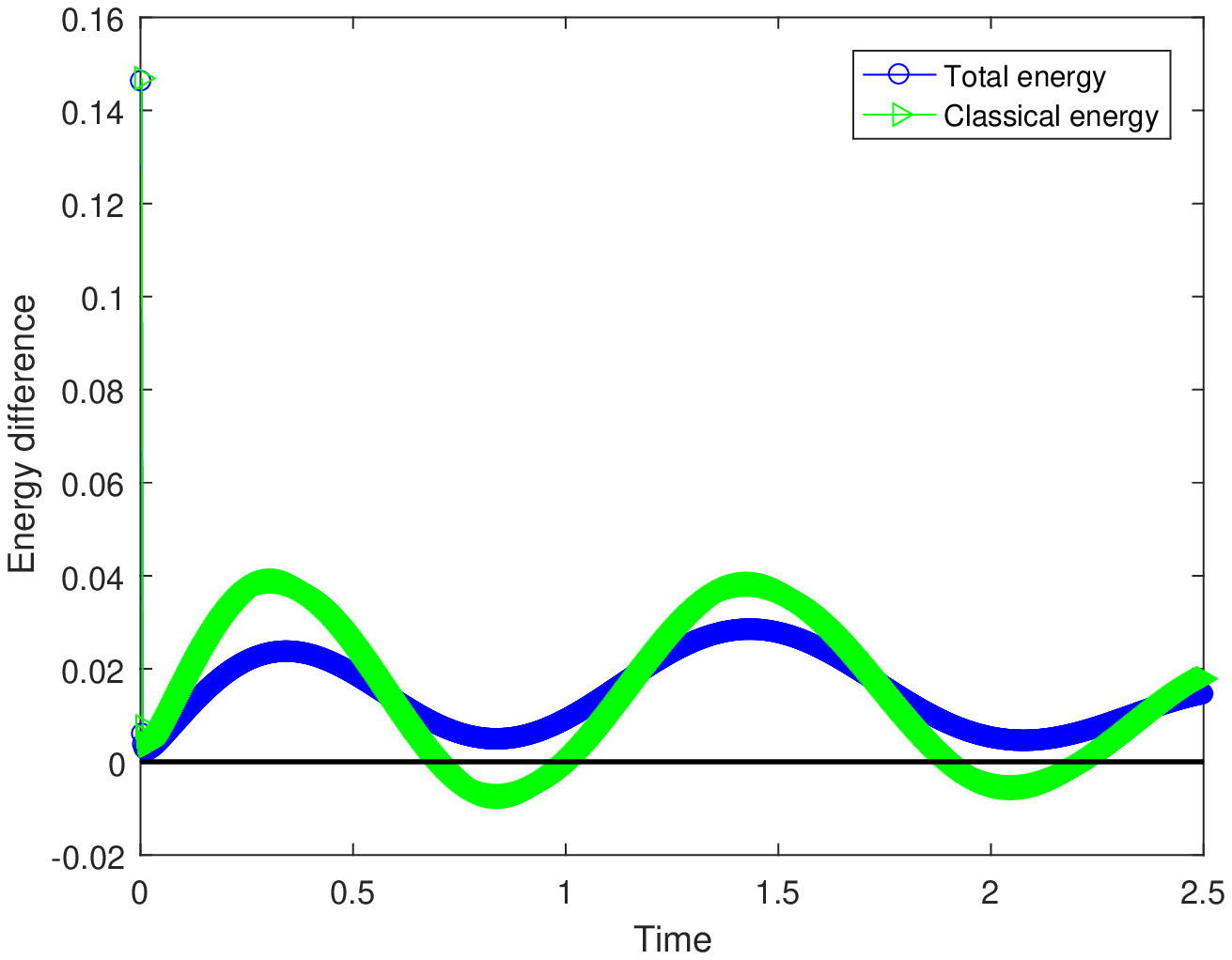}}
\subfloat[$\alpha=0.7$]{%
   \includegraphics[width=2in,height=1.5in]{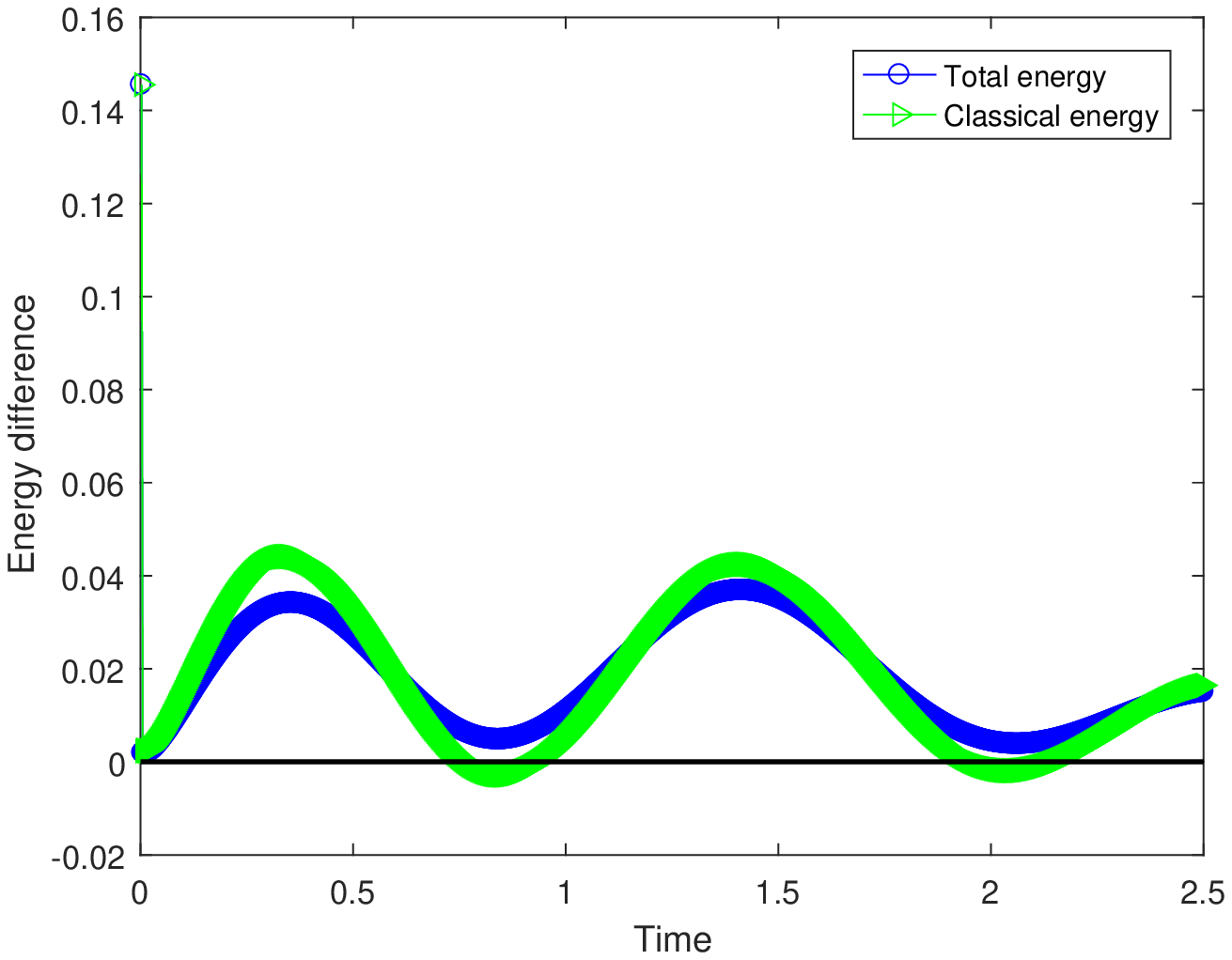}}
\caption{\label{enery_compare}The  energy behaviors of the Cole-Cole model with different fractional orders $\alpha$. In the legend, the {\it Classical\ energy}  refers to $\mathcal{E}_1$, the {\it Diffusive energy} to  $\mathcal{E}^{\sharp}_2$, the {\it Total energy}  to  $\mathcal{E}^{\sharp}$ and the {\it Energy difference}  refers to  ${\mathcal{F}}(t_n)-{\mathcal{F}}(t_{n+1})$ with ${\mathcal{F}}=\mathcal{E}^{
\sharp},\, \mathcal{E}_1$.}
\label{fig:energyInitial}
\end{figure}

\section{Conclusions}
In this work, we  develop a fast algorithm to solve the one-dimensional time-domain Maxwell's equations  for the Cole-Cole dispersive medium. A new, sharpened and monotonically decreasing  total energy function for the Cole-Cole model is derived    for  the first time, which   better   describes  the energy   of a Cole-Cole medium than the classical  energy  defined in \cite{LiHuangLinCOLE}. 

The  numerical challenge imposed by the  fractional derivative   involved in the polarization equation    is tackled by  using the diffusive representation, and  suitably chosen   quadrature formula.
A finite number of continuous auxiliary  variables   are introduced to approximate and localize the convolution kernel, which satisfy the local-in-time ODEs and can be easily solved.

The stability of the resulted approximate system is   proved to hold  as long as all the quadrature coefficients are positive. Therefore,  we establish a nonlinear constrained optimization  numerical scheme to preserve the positivity of the quadrature coefficients.

The spatial discretization is achieved by the DG method and we   presented a rigorous error analysis  for the  semi-discrete DG scheme for the constant coefficient case. Compared with the convergence rate $\mathcal{O}\left(h^{k+0.5}\right)$  in \cite{WangZhangZhang}, we  obtained an optimal convergence rate of $\mathcal{O}(h^{k+1})$ by a special choice of the numerical fluxes and projections. The  time discretization for the approximated system is achieved by the standard BDF2 formula and the
overall complexity for the proposed algorithm is $\mathcal{O}(N\,\text{log}N)$ with N being the total time steps.

\section*{Acknowledgments}
The work of J. Xie is partially supported by NSFC Grants (Nos. 12171274,\,12171465).
The work of M. Li is partially supported by a NSFC Grant (No. 11871139). 
The work of MYO's work was partial sponsored by US NSF Grant DMS-1821857.

\bibliographystyle{IMANUM-BIB}
\bibliography{IMANUM-refs}

\clearpage

\end{document}